\numberwithin{equation}{section}
\renewcommand{\frac}{\tfrac}
\title{
A Novel Catalyst Scheme for Stochastic Minimax Optimization
    }
\author{
    Guanghui Lan \thanks{H. Milton Stewart School of Industrial and Systems Engineering, Georgia Institute of Technology, Atlanta, GA, 30332. (E-mail: \url{george.lan@isye.gatech.edu}).}
    \and 
        Yan Li   \thanks{H. Milton Stewart School of Industrial and Systems Engineering, Georgia Institute of Technology, Atlanta, GA, 30332. (E-mail: \url{yli939@gatech.edu}).}
}
\date{\vspace{-5ex}}
\begin{document}
{
\makeatletter
\addtocounter{footnote}{1} 
\renewcommand\thefootnote{\@fnsymbol\c@footnote}%
\makeatother
\maketitle
}

\maketitle

\begin{abstract}
This paper presents a proximal-point-based catalyst scheme for simple first-order methods applied to convex minimization and convex-concave minimax problems. In particular, for smooth and (strongly)-convex minimization problems, the proposed catalyst scheme, instantiated with a simple variant of stochastic gradient method, attains the optimal rate of convergence in terms of both deterministic and stochastic errors. For smooth and strongly-convex-strongly-concave minimax problems, the catalyst scheme attains the optimal rate of convergence for deterministic and stochastic errors up to a logarithmic factor. To the best of our knowledge, this reported convergence seems to be attained for the first time by stochastic first-order methods in the literature. We obtain this result by designing and catalyzing a novel variant of stochastic extragradient method for solving smooth and strongly-monotone variational inequality, which may be of independent interest.
\end{abstract}


\section{Introduction}\label{sec_intro}

We consider the problem of 
\begin{align}\label{def_problem}
\textstyle
 \min_{x \in X} \big\{ f(x):= \max_{y \in Y} F(x,y) \big\},
\end{align}
where $F: X \times Y \to \RR$ is $\mu_p$-strongly-convex w.r.t. $x$ and $\mu_d$-strongly-concave w.r.t. $y$ for some $\mu_p, \mu_d \geq 0$.
That is,
\begin{align}
F(x_1, y) - F(x_2, y) - \langle \nabla_x F(x_2, y), x_1-x_2 \rangle \ge \tfrac{\mu_p}{2} \|x_1 - x_2\|^2, \ \forall x_1, x_2 \in X, \label{sc_x}\\ 
F(x, y_1) - F(x, y_2) - \langle \nabla_y F(x, y_2), y_1-y_2 \rangle \le -\tfrac{\mu_d}{2} \|y_1 - y_2\|^2, \ \forall y_1, y_2 \in Y. \label{sc_y}
\end{align}
In addition,  $X$ and $Y$ are compact convex sets. 
In view of the strong duality, without loss of generality, going forward  we assume that $\mu_d \geq \mu_p$. 
We assume additionally that the gradient of $F$ is $L$-Lipschitz.
That is,
$
\norm{\nabla F(x,y) - \nabla F(x', y')}_* \leq L \norm{(x, y) - (x', y')}
$
for some $L > 0$,\footnote{
It should be noted that some prior development (e.g., \cite{wang2020improved, jin2022sharper, zhao2022accelerated, zhang2021robust}) consider using $L_x$, $L_y$, and $L_{xy}$ to model potentially different smoothness along primal and dual variables and the strength of their  non-linear coupling.
In this manuscript we take $L = \max \cbr{L_x, L_y, L_{xy}}$ and do not differentiate these three parameters when defining the problem class \eqref{def_problem}.
}
where $\norm{\cdot}_*$ denotes the dual norm of $\norm{\cdot}$.
For the discussions in this manuscript, we will consider both the deterministic setting, where exact first-order information of $F$ is available, and the stochastic setting, where we only have access to a stochastic oracle producing unbiased estimate of the gradient information. 
Clearly, \eqref{def_problem} subsumes the problem of convex optimization when $Y$ is a singleton. 

To solve \eqref{def_problem}, it is natural to consider the variational inequality (VI) associated with its optimality condition.
For general convex-concave problems where $\mu_p = \mu_d = 0$, \cite{nemirovski2004prox} presents the mirror-prox method, which can be viewed as a generalization to the extragradient method \cite{korpelevich1976extragradient}, and exhibits an $\cO(1/\epsilon)$ iteration complexity in the deterministic setting for obtaining a duality gap upper bounded by $\epsilon$.
Stochastic variants of mirror-prox has also been discussed in \cite{juditsky2011solving, chen2017accelerated} with an $\cO(1/\epsilon^2)$ sample complexity.
For strongly-convex-strongly-concave problems where $\mu \coloneqq \min\cbr{\mu_p, \mu_d} > 0$, the corresponding VI becomes strongly-monotone.
In the deterministic setting, to output an approximate solution that has its distance to the optimal solution bounded by $\epsilon$, an  optimal $\cO({L}/{\mu} \log ({1}/{\epsilon}))$ iteration complexity is first obtained by the dual extrapolation method \cite{nesterov2007dual, nesterov2006solving}. 
By adopting stochastic approximation method \cite{nemirovski2009robust, lan2020first} to the strongly monotone VI, an $\cO(1/\epsilon)$ sample complexity can be obtained in the stochastic setting. 
Algorithmic development of VI has been an active line of research \cite{yousefian2017smoothing, iusem2017extragradient, cui2021analysis, monteiro2010complexity, dang2015convergence, malitsky2015projected, beznosikov2021distributed, mokhtari2020unified, popov1980modification, tseng1995linear}, partly due to its emerging role in data science.
Until very recently, a method that obtains optimal complexities in both deterministic and stochastic settings is proposed in \cite{kotsalis2022simple}.
In addition to VI-based methods, another major class of primal-dual methods consider solving a special case of \eqref{def_problem} with bilinear coupling between primal and dual variables. 
Methods for solving these bilinear minimax problem include Nesterov's smoothing scheme \cite{nesterov2005smooth},  primal-dual hybrid gradient method \cite{chambolle2011first, chambolle2016ergodic}, and more recent development with optimal complexities    \cite{jin2022sharper, chen2014optimal}. 

While the adoption of the VI perspective can be convenient, the direct application of the aforementioned class of VI-based methods for \eqref{def_problem} is known to be non-optimal even in the deterministic setting,  when \eqref{def_problem} becomes asymmetric in the sense that $\mu_p \neq \mu_d$.
In particular, it can be seen that when $\mu_p = 0$, the corresponding VI is only monotone, which implies an $\cO(L/\epsilon)$ iteration complexity by VI-based methods. 
This should be contrasted with the lower bound of $\Omega(L/\sqrt{\mu_d \epsilon})$ recently established in \cite{ouyang2021lower} for the class of bilinear convex-concave problems. 
Additionally, for strongly-convex-strong-concave problem with $\mu_d > \mu_p$, the obtained $\cO({L}/{\mu} \log ({1}/{\epsilon}))$ iteration complexity by VI-based methods is strictly larger than the existing lower bound $\Omega(L/\sqrt{\mu_p \mu_d} \log(1/\epsilon))$ presented in \cite{zhang2019lower, ibrahim2019lower}. 
Such a gap between lower and upper bounds is recently closed in \cite{lin2020near} and later improved in \cite{wang2020improved}, where an inexact accelerated proximal point framework is proposed and obtain the aforementioned lower complexity bounds up to logarithmic factors. 
It should be noted that there has been a rich line of research in developing inexact (accelerated) proximal point methods for convex optimization, and more generally, monotone inclusion problems \cite{guler1992new, salzo2012inexact, monteiro2013accelerated, monteiro2010complexity}.  

Nevertheless, a limitation of existing development for solving problem \eqref{def_problem} is the absence of a unified algorithm 
that can simultaneously attain optimal complexities in both deterministic and stochastic regimes. 
Indeed, the aforementioned proximal-point based optimal methods \cite{lin2020near, wang2020improved} in the deterministic setting seem difficult to generalize to the stochastic setting in view its somewhat stringent error condition for each proximal step. 
On the other hand, existing methods obtaining optimal reduction of stochastic error \cite{zhao2022accelerated, zhang2021robust, juditsky2011solving, chen2017accelerated}  suffer from non-optimal reduction in its deterministic error.
Our essential objective in this manuscript is to design a single method that, with proper parameter specifications, obtain optimal iteration complexity in the deterministic setting, and optimal reduction of both deterministic and stochastic errors in the stochastic setting. 
Such a development bears the potential of several practical benefits.
For instance, one can reduce the iteration complexity in the stochastic setting by using mini-batches; 
improve the communication complexity in the distributed or federated settings;
and take advantage of variance reduction techniques available for smooth and finite-sum problems. 

Our main contributions can be summarized into the following aspects. 
First, we propose a novel inexact accelerated proximal point framework for both convex optimization and minimax optimization. 
The proposed framework is referred to as a catalyst scheme as it is capable of accelerating simple non-optimal methods and subsequently  obtaining optimal complexities. 
In a nutshell, within the catalytic process, the to-be-catalyzed non-optimal methods are used by the proposed framework as a subroutine for solving the inexact proximal step. 
 Compared to the inexact accelerated proximal point framework considered in \cite{lin2020near}, the proposed catalyst framework properly modifies the search sequences within the acceleration scheme using the outputs of the inexact proximal update,  leading to its optimal performances in both deterministic and stochastic regimes. 

Second, we show that for smooth and convex problems, by catalyzing a simple variant of (stochastic) gradient descent method, the proposed framework obtains an $\cO(\sqrt{L/\epsilon})$ iteration complexity in the deterministic setting, and an 
$\cO(\sqrt{L/\epsilon} + \sigma^2/\epsilon^2)$ sample complexity in the stochastic setting. 
Similarly, for smooth and strongly-convex problems, the proposed framework can catalyze (stochastic) gradient descent and obtain an  $\cO(\sqrt{L/\mu} \log(1/\epsilon))$ iteration complexity in the deterministic setting, and  
$\cO(\sqrt{L/\mu} \log(1/\epsilon) + \sigma^2 / (\mu \epsilon))$ sample complexity in the stochastic setting. 
Notably, these obtained complexities are optimal for smooth and (strongly)-convex optimization.
In particular, even when restricting to the deterministic setting, the obtained iteration complexity improves over existing catalyst scheme for convex optimization \cite{lin2018catalyst} by removing the additional logarithmic factor.

Third, we apply the proposed catalyst scheme to the minimax problem considered in \eqref{def_problem}. 
En route, we  develop a novel variant of extragradient method, named regularized extragradient (REG), applicable to strongly-monotone VI.
REG method obtains $\cO(L/\mu \log(1/\epsilon))$ iteration complexity in the deterministic setting and 
$\cO(L/\mu \log(1/\epsilon) + \sigma^2/ (\mu \epsilon) ) $ sample complexity in the stochastic setting. 
Notably, REG simultaneously controls the gap value and the distance to the reference point (determined a posteriori), a property that makes it an ideal candidate for the catalyst scheme compared to existing alternatives that only consider  the distance to the solution (e.g., \cite{kotsalis2022simple, mokhtari2020unified, beznosikov2021distributed}). 
We then further catalyze the proposed REG method and establish that, in the deterministic setting, to obtain an $\epsilon$-optimal solution of \eqref{def_problem}, the iteration complexity can be bounded by $\tilde{\cO}(L/\sqrt{\mu_d \epsilon})$ for convex-strongly-concave problems, 
and $\tilde{\cO}(L/\sqrt{\mu_d \mu_p} \log(1/\epsilon))$ for strongly-convex-strongly-concave problems. 
In the stochastic setting, we obtain sample complexities that are optimal up to logarithmic factors. 
Specifically, for convex-strongly-concave problems, we establish an
$
\tilde{\cO} ( L/\sqrt{\mu_d \epsilon} + \sigma^2 / \epsilon^2)
$
sample complexity.
For strongly-convex-strongly-concave problems, we obtain an 
$
\tilde{\cO} (L/\sqrt{\mu_d \mu_p} \log(1/\epsilon) + \sigma^2 / (\mu \epsilon))
$
sample complexity.
To the best of our knowledge, the proposed catalyst scheme seems to be the first method that attains optimal complexities (up to logarithmic factors) in both deterministic and stochastic regimes for smooth and strongly-convex-strongly-concave problems. 

Finally, it should be noted that the catalyst scheme can also be applied to convex-concave problems where $\mu_p = \mu_d = 0$, with a black-box reduction technique considered in \cite{lin2020near}. In a nutshell, one can perturb \eqref{def_problem} with strongly-convex and strongly-concave regularizers while controlling the distance between the  original and the perturbed objectives. 
We do not explicitly delve into this reduction process in detail to maintain the simplicity of our presentation.

The rest of the manuscript in organized as follows. 
In section \ref{sec_opt}, we present the catalyst scheme for convex optimization, and establish its optimal iteration and sample complexities when combined with the stochastic gradient descent method. 
In section \ref{sec_eg}, we propose and analyze a variant of extragradient method for solving smooth and strongly monotone variational inequality, and establish its optimal iteration and sample complexities.
In section \ref{sec_minimax_deterministic}, we present the catalyst scheme for minimax optimization in the deterministic setting, and establish its  iteration complexity when catalyzing the extragradient variant. 
Finally, in section \ref{sec_minimax_stoch} we analyze the catalyst scheme for minimax optimization in the stochastic setting, and establish its  sample complexity when combined with the stochastic extragradient method proposed in Section \ref{sec_eg}.


\section{Catalyst Scheme for Convex Optimization}\label{sec_opt}

In this section, we consider the convex programming given in the form of
\begin{equation} \label{cp}
f^* := \min_{x \in X} f(x),
\end{equation}
where $X \subseteq \RR^d$ is a closed convex set and $f: X \to \RR$ is
a convex function such that
\begin{align} \label{eq:phi_convexity}
f(\gamma x_1 + (1-\gamma) x_2)
\le \gamma f(x_1) + (1-\gamma) f(x_2) - \tfrac{\mu \gamma (1-\gamma)}{2}  \|x_1 - x_2\|^2, ~ \forall x_1, x_2 \in X,
\end{align}
for some $\mu \ge 0$. In particular, we say that the problem is strongly convex
if $\mu > 0$.
In addition, we assume $f$ is smooth with modulus $L$:
\begin{align*}
\abs{ f(x_1) - f(x_2) - \inner{\nabla f(x_1)}{x_2 - x_1} } \leq \frac{L}{2} \norm{x_1 - x_2}^2, ~  ~ \forall x_1, x_2 \in X.
\end{align*}

The general framework of the proposed catalyst scheme is stated in Algorithm \ref{alg:basic_cat}, which can be viewed as a meta algorithm that wraps around an optimization method $\cA$ and improves its computational efficiency. 
At each iteration, for a given prox-center $\hat x_k$ and some $\beta_k \ge 0$, the catalyst scheme considers the following proximal update 
\begin{align} \label{eq:cp_subproblem}
\min_{x \in X} \cbr{ \phi_k(x) := f(x) + \tfrac{\beta_k}{2} \norm{x - \hat x_k}^2 }.
\end{align}
We let $(\tilde x_k, x_k) = \cA(\phi_k)$ denote the output of the to-be-catalyzed method $\cA$ for solving the above proximal update \eqref{eq:cp_subproblem} while initialized at $\hat{x}_k$.
As will be clarified later in our analysis, this particular choice of initial point when calling $\cA(\phi_k)$ is of particular importance. 
In addition, we assume  
\begin{align} \label{eq:cp_inexact}
\EE[\phi_k(\tilde x_k) -\phi_k(\tilde x) + \tfrac{\alpha_k}{2}  \|\tilde x - x_k\|^2] \le  \tfrac{\varepsilon_k}{2}  \|\tilde x - \hat x_k \|^2 + \delta_k, ~ \forall \tilde x \in X, 
\end{align}
for some  $\alpha_k,   \delta_k, \varepsilon_k \ge 0$.
Here, the expectation is taken with respect to the possible randomness introduced when solving problem~\eqref{eq:cp_subproblem}.
It should be noted that the implementation of the catalyst scheme only requires $\cbr{\alpha_k}$.
Clearly, when the proximal step is computed exactly, we have $\alpha_k = \beta_k$, $\varepsilon_k = \delta_k = 0$.

\begin{algorithm}[H]
\caption{\texttt{Catalyst}$(\cA)$:  catalyst scheme for convex optimization}
\begin{algorithmic}
\STATE {\bf Input:} initial points $\overline{x}_0 = \tilde{x}_0$, number of iterations $K > 0$.
\FOR {$k = 1, 2, \ldots, K$}
\STATE{
\vspace{-0.3in}
\begin{align}
& \hat x_k = \gamma_k \bar x_{k-1} + (1-\gamma_k) \tilde x_{k-1}.  \label{eq:define_hat_x} \\
& (\tilde x_k, x_k) =
 \cA(\phi_k) ~  \text{s.t.  \eqref{eq:cp_inexact} holds for some $(\alpha_k, \varepsilon_k, \delta_k)$.}\\
& \bar x_{k} =  
 \tfrac{1}{\alpha_k \gamma_k  +  \mu (1-\gamma_k) }\left[\alpha_k x_k + ( \mu -  \alpha_k) (1-\gamma_k) \tilde x_{k-1}\right]. \label{eq:define_bar_x}
\end{align}
}
\ENDFOR
\STATE{{\bf Output:} $\tilde{x}_K$}
\end{algorithmic} \label{alg:basic_cat}
\end{algorithm}

Concrete method $\cA$ for constructing $(\tilde x_k, x_k)$ satisfying condition \eqref{eq:cp_inexact} will be discussed in Section \ref{subsec_catalyst_sgd}. 
Before that, let us first establish the following generic characterizations for each step of Algorithm \ref{alg:basic_cat}. 

\begin{lemma}\label{convex_each_step_characterization}
Let $\{(\tilde x_k, x_k, \bar x_{k}, \hat x_k)\}$ be a sequence of iterates generated by Algorithm~\ref{alg:basic_cat}. Then
for any $k \ge 1$, we have
\begin{align}
&\EE[f(\tilde x_k) - f(x)  +  \frac{\alpha_k \gamma_k^2 + \gamma_k(1-\gamma_k) \mu}{2}  \| x - \bar x_{k}\|^2] \nonumber\\
\leq & (1-\gamma_k) [f(\tilde x_{k-1}) - f(x)]  + \tfrac{(\beta_k + \varepsilon_k) \gamma_k^2}{2} \|x - \bar x_{k-1} \|^2 +\delta_k, ~ \forall x \in X. \label{eq:main_recursion}
\end{align}
\end{lemma}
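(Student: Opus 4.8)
The plan is to chain together three ingredients: the convexity of $f$, the inexactness condition \eqref{eq:cp_inexact} applied with a judiciously chosen test point, and the algebraic identity defining $\bar x_k$ in \eqref{eq:define_bar_x}. First I would invoke \eqref{eq:phi_convexity} along the segment joining $\tilde x_{k-1}$ to an arbitrary $x \in X$, together with the identity $\hat x_k = \gamma_k \bar x_{k-1} + (1-\gamma_k)\tilde x_{k-1}$, to produce a lower bound on $f(\hat x_k + \gamma_k(\text{something}) )$ or, more precisely, to relate $f\big(\gamma_k z + (1-\gamma_k)\tilde x_{k-1}\big)$ for a point $z$ to be determined to $\gamma_k f(z) + (1-\gamma_k)f(\tilde x_{k-1})$ minus a strong-convexity term. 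The natural choice is to take the test point $\tilde x$ in \eqref{eq:cp_inexact} to be $\tilde x = \gamma_k x + (1-\gamma_k)\tilde x_{k-1}$ for the given $x$, so that $\tilde x - \hat x_k = \gamma_k(x - \bar x_{k-1})$, which explains the $\gamma_k^2 \|x - \bar x_{k-1}\|^2$ term on the right-hand side of \eqref{eq:main_recursion} once one expands $\tfrac{\varepsilon_k}{2}\|\tilde x - \hat x_k\|^2 = \tfrac{\varepsilon_k \gamma_k^2}{2}\|x-\bar x_{k-1}\|^2$; similarly the $\tfrac{\beta_k}{2}\|x - \hat x_k\|^2$ piece inside $\phi_k$ evaluated along this $\tilde x$ contributes $\tfrac{\beta_k \gamma_k^2}{2}\|x - \bar x_{k-1}\|^2$.

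Next I would unpack \eqref{eq:cp_inexact}: writing $\phi_k(\tilde x_k) - \phi_k(\tilde x) = f(\tilde x_k) - f(\tilde x) + \tfrac{\beta_k}{2}\|\tilde x_k - \hat x_k\|^2 - \tfrac{\beta_k}{2}\|\tilde x - \hat x_k\|^2$, I can drop the nonnegative $\tfrac{\beta_k}{2}\|\tilde x_k - \hat x_k\|^2$ and bound $f(\tilde x_k) \ge f(\tilde x_k)$ trivially while using $\phi_k(\tilde x_k) \ge f(\tilde x_k)$. Then I substitute the convexity estimate for $f(\tilde x) = f(\gamma_k x + (1-\gamma_k)\tilde x_{k-1}) \le \gamma_k f(x) + (1-\gamma_k) f(\tilde x_{k-1}) - \tfrac{\mu \gamma_k(1-\gamma_k)}{2}\|x - \tilde x_{k-1}\|^2$. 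This yields an inequality of the form
\begin{align*}
\EE\big[ f(\tilde x_k) - \gamma_k f(x) - (1-\gamma_k) f(\tilde x_{k-1}) + \tfrac{\mu\gamma_k(1-\gamma_k)}{2}\|x - \tilde x_{k-1}\|^2 + \tfrac{\alpha_k}{2}\|\tilde x - x_k\|^2 \big] \le \tfrac{(\beta_k + \varepsilon_k)\gamma_k^2}{2}\|x - \bar x_{k-1}\|^2 + \delta_k.
\end{align*}
Subtracting $f(x)$ from both sides (to convert $\gamma_k f(x) + (1-\gamma_k)f(\tilde x_{k-1})$ into $f(x) - (1-\gamma_k)[f(\tilde x_{k-1}) - f(x)]$ after rearrangement) turns the left side into exactly the $f(\tilde x_k) - f(x)$ and $(1-\gamma_k)[f(\tilde x_{k-1}) - f(x)]$ terms appearing in \eqref{eq:main_recursion}, and leaves a residual quadratic combination $\tfrac{\mu\gamma_k(1-\gamma_k)}{2}\|x - \tilde x_{k-1}\|^2 + \tfrac{\alpha_k}{2}\|\gamma_k x + (1-\gamma_k)\tilde x_{k-1} - x_k\|^2$ that I must show dominates $\tfrac{\alpha_k \gamma_k^2 + \gamma_k(1-\gamma_k)\mu}{2}\|x - \bar x_k\|^2$.

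The heart of the argument — and the step I expect to be the main obstacle — is precisely this last quadratic identity/inequality. I would treat $\bar x_k$ as defined by \eqref{eq:define_bar_x}, namely $(\alpha_k\gamma_k + \mu(1-\gamma_k))\bar x_k = \alpha_k x_k + (\mu - \alpha_k)(1-\gamma_k)\tilde x_{k-1}$, and verify that the map $x \mapsto \tfrac{\mu\gamma_k(1-\gamma_k)}{2}\|x - \tilde x_{k-1}\|^2 + \tfrac{\alpha_k}{2}\|\gamma_k x + (1-\gamma_k)\tilde x_{k-1} - x_k\|^2$, viewed as a quadratic in $x$, has the form $\tfrac{c_k}{2}\|x - \bar x_k\|^2 + (\text{terms independent of } x) + (\text{cross terms})$ with $c_k = \alpha_k\gamma_k^2 + \mu\gamma_k(1-\gamma_k)$ matching the coefficient in \eqref{eq:main_recursion}. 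Completing the square: the coefficient of $\|x\|^2/2$ in the two quadratics is $\mu\gamma_k(1-\gamma_k) + \alpha_k\gamma_k^2$, so the minimizing $x^\star$ satisfies $(\mu\gamma_k(1-\gamma_k) + \alpha_k\gamma_k^2) x^\star = \mu\gamma_k(1-\gamma_k)\tilde x_{k-1} + \alpha_k\gamma_k(x_k - (1-\gamma_k)\tilde x_{k-1})$, and dividing through by $\gamma_k$ one recognizes $x^\star = \bar x_k$ exactly as in \eqref{eq:define_bar_x}. Hence the quadratic equals $\tfrac{c_k}{2}\|x - \bar x_k\|^2$ plus a nonnegative constant (the value at the minimizer, which one discards), and the claimed recursion \eqref{eq:main_recursion} follows. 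I would also double-check the edge case $\mu = 0$, where \eqref{eq:define_bar_x} degenerates to $\bar x_k = x_k$ and the identity still goes through, and note that all steps survive taking expectations since \eqref{eq:cp_inexact} is already stated in expectation and the other manipulations are deterministic linear/convex operations.
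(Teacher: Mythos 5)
Your proposal is correct and follows essentially the same route as the paper: test the inexactness condition \eqref{eq:cp_inexact} at $\tilde x = \gamma_k x + (1-\gamma_k)\tilde x_{k-1}$, invoke the (strong) convexity of $f$, substitute the definition of $\hat x_k$ so that $\tilde x - \hat x_k = \gamma_k(x - \bar x_{k-1})$, and aggregate the two residual quadratics into a single one centered at $\bar x_k$ — the paper performs this last step via convexity of $\|\cdot\|^2$ (Jensen) rather than completing the square, which is equivalent when the norm is Euclidean. One small slip in your sanity check: when $\mu = 0$, \eqref{eq:define_bar_x} gives $\bar x_k = [x_k - (1-\gamma_k)\tilde x_{k-1}]/\gamma_k$, not $\bar x_k = x_k$; this does not affect your argument, since the completed square still has its minimizer exactly at $\bar x_k$.
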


\begin{proof}
By \eqref{eq:cp_subproblem} and \eqref{eq:cp_inexact}, we have
\begin{align*}
\EE[f(\tilde x_k) + \tfrac{\beta_k}{2} \|\tilde x_k - \hat x_k\|^2 +\tfrac{\alpha_k}{2} \|\tilde x - x_k\|^2] \le f(\tilde x) + \tfrac{\beta_k + \varepsilon_k}{2} \|\tilde x - \hat x_k \|^2 +\delta_k, 
\end{align*}
for any $\tilde x \in X$. In particular, setting $\tilde x = \gamma_k x + (1 - \gamma_k) \tilde x_{k-1}$ leads to
\begin{align}
& \EE[f(\tilde x_k) + \tfrac{\beta_k}{2} \|\tilde x_k - \hat x_k\|^2 +\tfrac{\alpha_k}{2} \|\gamma_k x + (1 - \gamma_k) \tilde x_{k-1} - x_t\|^2] \nonumber\\
\le & f(\gamma_k x + (1 - \gamma_k) \tilde x_{k-1}) + \tfrac{\beta_k + \varepsilon_k}{2} \|\gamma_k x + (1 - \gamma_k) \tilde x_{k-1} - \hat x_k \|^2 +\delta_k\nonumber\\
\overset{(a)}{\leq} &  \gamma_k f(x) + (1-\gamma_k) f(\tilde x_{k-1}) - \tfrac{\mu \gamma_k (1-\gamma_k)}{2} \|x - \tilde x_{k-1}\|^2 \nonumber\\
& \quad + \tfrac{\beta_k +\varepsilon_k}{2} \|\gamma_k x + (1 - \gamma_k) \tilde x_{k-1} - \hat x_k \|^2 +\delta_k, \nonumber
\end{align}
where inequality $(a)$ follows from \eqref{eq:phi_convexity}.
Simple rearrangements of the above relation yields 
\begin{align*}
& \EE \sbr{f(\tilde{x}_k) - f(x) ) + \frac{\alpha_k}{2}  \norm{\gamma_k x + (1-\gamma_k) \tilde{x}_{k-1} - x_k }^2
 }
+ \frac{\gamma_k(1-\gamma_k) \mu}{2} \norm{\tilde{x}_{k-1} -x}^2  \\
\leq & 
(1-\gamma_k) \sbr{f(\tilde{x}_{k-1}) - f(x)} 
+ \frac{\beta_k + \varepsilon_k}{2} \norm{\gamma_k x + (1-\gamma_k) \tilde{x}_{k-1} - \hat{x}_k}^2 + \delta_k.
\end{align*}
 Substituting the definition of $\hat{x}_k$ in \eqref{eq:define_hat_x}  into the above relation gives 
\begin{align}
& \EE \sbr{f(\tilde{x}_k) - f(x) )  + \frac{\alpha_k}{2}  \norm{\gamma_k x + (1-\gamma_k) \tilde{x}_{k-1} - x_k }^2
+ \frac{\gamma_k(1-\gamma_k) \mu}{2} \norm{\tilde{x}_{k-1} -x}^2 }  \nonumber \\
\leq & 
(1-\gamma_k) \sbr{f(\tilde{x}_{k-1}) - f(x)} 
+ \frac{(\beta_k + \varepsilon_k) \gamma_k^2 }{2} \norm{ x  - \overline{x}_{k-1}}^2 + \delta_k. \label{min_recursion_raw_bf_aggregation}
\end{align}
It remains to note that 
\begin{align}
&  \frac{\alpha_k}{2}  \norm{\gamma_k x + (1-\gamma_k) \tilde{x}_{k-1} - x_k }^2
+ \frac{\gamma_k(1-\gamma_k) \mu}{2} \norm{\tilde{x}_{k-1} -x}^2 \nonumber \\
= & \frac{\alpha_k \gamma_k^2}{2}  \norm{ x + \frac{1-\gamma_k}{\gamma_k} \tilde{x}_{k-1} - \frac{1}{\gamma_k} x_k }^2
+ \frac{\gamma_k(1-\gamma_k) \mu}{2} \norm{\tilde{x}_{k-1} -x}^2 \nonumber \\
\overset{(b)}{\geq} & 
\frac{\alpha_k \gamma_k^2 + \gamma_k(1-\gamma_k) \mu}{2} 
\norm{
x - \frac{ (  \mu - \alpha_k) (1-\gamma_k)  }{\alpha_k \gamma_k + (1-\gamma_k) \mu} \tilde{x}_{k-1} 
- \frac{\alpha_k}{\alpha_k \gamma_k + (1 - \gamma_k) \mu} x_k
}^2 \nonumber  \\
\overset{(c)}{=} & 
\frac{\alpha_k \gamma_k^2 + \gamma_k(1-\gamma_k) \mu}{2} 
\norm{
x - \overline{x}_k
}^2, \label{min_aggregate_x}
\end{align}
where $(b)$ follows from convexity of $\norm{\cdot}^2$, and $(b)$ follows from the definition of $\overline{x}_k$ in \eqref{eq:define_bar_x}.
The desired inequality \eqref{eq:main_recursion} follows immediately by combining \eqref{min_recursion_raw_bf_aggregation} and \eqref{min_aggregate_x}.
\end{proof}

With Lemma \ref{convex_each_step_characterization} in place, the next proposition establishes the global convergence of Algorithm \ref{alg:basic_cat} with generic  parameter specification.

\begin{proposition} \label{the:cat_main}
Define \begin{align} \label{eq:def_Gamma}
\Gamma_k := 
\begin{cases}
1, & k=1,\\
(1-\gamma_k) \Gamma_{k-1}, & k \ge 2.
\end{cases}
\end{align}
Suppose that $\alpha_k, \beta_k, \gamma_k$, and $\varepsilon_k$ in Algorithm~\ref{alg:basic_cat}
satisfies
\begin{align} \label{eq:cat_main_assumption}
\gamma_1 = 1, ~ 
\tfrac{(\beta_k + \varepsilon_k) \gamma_k^2}{\Gamma_k} \le \tfrac{[\alpha_{k-1} \gamma_{k-1}  +  \mu (1-\gamma_{k-1})] \gamma_{k-1}}{\Gamma_{k-1}}, k \ge 2.
\end{align}
Then for any $K \ge 1$, we have
\begin{align*}
 \EE[f(\tilde x_K) - f(x) + \tfrac{[\alpha_K \gamma_K  +  \mu (1-\gamma_K)] \gamma_K}{2} \| x - \bar x_{K}\|^2] \leq \Gamma_K \left[\tfrac{(\beta_1 + \epsilon_1) \gamma_1^2}{2} \|x - \bar x_{0} \|^2 + \tsum_{k=1}^K \tfrac{\delta_k}{\Gamma_k}\right], \forall x \in X. 
\end{align*}
\end{proposition}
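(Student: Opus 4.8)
The plan is to prove the proposition by a standard telescoping argument applied to the recursion from Lemma~\ref{convex_each_step_characterization}. First I would fix $x \in X$ and, for each $k \ge 1$, divide the inequality \eqref{eq:main_recursion} by $\Gamma_k$. Using the definition \eqref{eq:def_Gamma}, the coefficient $(1-\gamma_k)/\Gamma_k$ in front of $f(\tilde x_{k-1}) - f(x)$ equals $1/\Gamma_{k-1}$ for $k \ge 2$ (and for $k=1$ the term vanishes since $\gamma_1 = 1$). This produces, for $k \ge 2$,
\begin{align*}
& \tfrac{1}{\Gamma_k}\EE[f(\tilde x_k) - f(x)] + \tfrac{[\alpha_k \gamma_k + \mu(1-\gamma_k)]\gamma_k}{2\Gamma_k}\EE\|x - \bar x_k\|^2 \\
\le\;& \tfrac{1}{\Gamma_{k-1}}\EE[f(\tilde x_{k-1}) - f(x)] + \tfrac{(\beta_k + \varepsilon_k)\gamma_k^2}{2\Gamma_k}\EE\|x - \bar x_{k-1}\|^2 + \tfrac{\delta_k}{\Gamma_k},
\end{align*}
and for $k=1$ the analogous relation with the leading term on the right absent.

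The key point is then that the assumption \eqref{eq:cat_main_assumption} is exactly what is needed to make the quadratic-in-$\|x - \bar x_{k-1}\|^2$ term on the right at step $k$ be dominated by the corresponding quadratic term produced on the left at step $k-1$: concretely, $\tfrac{(\beta_k + \varepsilon_k)\gamma_k^2}{2\Gamma_k}\|x-\bar x_{k-1}\|^2 \le \tfrac{[\alpha_{k-1}\gamma_{k-1} + \mu(1-\gamma_{k-1})]\gamma_{k-1}}{2\Gamma_{k-1}}\|x - \bar x_{k-1}\|^2$. Hence summing the divided inequalities from $k=1$ to $K$, the distance terms telescope: all the intermediate $\bar x_k$ contributions ($1 \le k \le K-1$) cancel, leaving only the $k=K$ distance term on the left and the $k=1$ ``input'' distance term $\tfrac{(\beta_1+\varepsilon_1)\gamma_1^2}{2}\|x - \bar x_0\|^2$ on the right (note $\Gamma_1 = 1$). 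The function-value terms also telescope, leaving $\tfrac{1}{\Gamma_K}\EE[f(\tilde x_K) - f(x)]$ on the left and nothing on the right (the $k=1$ left term $\tfrac{1}{\Gamma_0}$ is absent because $\Gamma_1 = 1$ makes $k=1$ the base case and $\gamma_1=1$ kills the incoming function term). What remains on the right is $\tfrac{(\beta_1 + \varepsilon_1)\gamma_1^2}{2}\|x - \bar x_0\|^2 + \sum_{k=1}^K \tfrac{\delta_k}{\Gamma_k}$. Multiplying through by $\Gamma_K$ yields the claimed bound.

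I would also double-check the edge cases: for $k=1$ one needs $\gamma_1 = 1$ so that $f(\tilde x_0)-f(x)$ drops out and $\Gamma_1 = 1$ is consistent, and one should verify that $\bar x_0 = \tilde x_0$ (from the Input line) is the reference point that appears. The telescoping of the quadratic terms uses nonnegativity implicitly only in that the coefficients are $\ge 0$; since \eqref{eq:cat_main_assumption} is an inequality between coefficients and $\|x - \bar x_{k-1}\|^2 \ge 0$, the replacement is valid termwise. I do not expect any genuine obstacle here — the proof is a routine ``divide by $\Gamma_k$ and telescope'' computation — the only mild care needed is bookkeeping the index shift between the $\alpha_k\gamma_k + \mu(1-\gamma_k)$ coefficient generated at step $k$ and the $\beta_{k+1} + \varepsilon_{k+1}$ coefficient consumed at step $k+1$, and confirming that \eqref{eq:cat_main_assumption} is stated with precisely the matching indices so that the chain of cancellations closes.
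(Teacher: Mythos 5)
Your proposal is correct and follows essentially the same route as the paper's proof: divide \eqref{eq:main_recursion} by $\Gamma_k$, use \eqref{eq:def_Gamma} and \eqref{eq:cat_main_assumption} to telescope the function-value and distance terms, and multiply by $\Gamma_K$. Your extra bookkeeping of the $k=1$ edge case and the index shift in \eqref{eq:cat_main_assumption} is exactly the care the paper leaves implicit.
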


\begin{proof}
Dividing both sides of \eqref{eq:main_recursion} by $\Gamma_k$, we have
\begin{align*}
&\EE[\tfrac{1}{\Gamma_k}[f(\tilde x_k) - f(x)]  + \tfrac{[\alpha_k \gamma_k  +  \mu (1-\gamma_k)] \gamma_k}{2 \Gamma_k} \| x - \bar x_{k}\|^2] \\
&\le \tfrac{1-\gamma_k}{\Gamma_k} [f(\tilde x_{k-1}) - f(x)]  + \tfrac{(\beta_k + \varepsilon_k) \gamma_k^2}{2 \Gamma_k} \|x - \bar x_{k-1} \|^2 + \tfrac{\delta_k}{\Gamma_k}, ~ \forall k \ge 1.
\end{align*}
Taking total expectation on both sides, the desired claim then follows from summing up these inequalities while utilizing the definition of $\Gamma_k$ in \eqref{eq:def_Gamma} and the assumption in \eqref{eq:cat_main_assumption}.
\end{proof}

With Theorem \ref{the:cat_main} in place, we first proceed to obtain the concrete convergence characterization of the catalyst scheme (Algorithm \ref{alg:basic_cat}) applied to smooth objectives ($\mu = 0$).

\begin{proposition}\label{prop_smooth_opt_generic}
Suppose $\mu = 0$.
Run  \texttt{Catalyst}$(\cA)$ (Algorithm~\ref{alg:basic_cat}) with 
\begin{align}
\label{gamma_beta_choice_catalyst}
\gamma_k = \frac{2}{k+1}, ~ \beta_k =  \frac{(k+1)L}{k}.
\end{align}
In addition, suppose $\cbr{\alpha_k}$ is chosen such that there exists $\cbr{(\varepsilon_k, \delta_k)}$ certifying \eqref{eq:cp_inexact} with 
\begin{align}
  \alpha_k =  \beta_k (1+\varepsilon), ~
  \varepsilon_k & = \beta_k \varepsilon,  ~ \varepsilon \leq 1, \label{err_condition_catalyst_1} \\
  ~\delta_k & \leq \delta \label{err_condition_catalyst_2}.
 \end{align}
Then we have 
\begin{align}\label{eq_opt_smooth_generic}
\EE[f(\tilde x_K) - f(x^*)] 
\le \tfrac{4L }{ K^2}  \|x^* - \bar x_{0} \|^2 + 2 K \delta . 
\end{align}
\end{proposition}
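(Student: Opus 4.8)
The plan is to apply Proposition~\ref{the:cat_main} with the prescribed parameters and then simplify the resulting bound. First I would record the quantities $\Gamma_k$: since $\gamma_k = 2/(k+1)$ gives $1-\gamma_k = (k-1)/(k+1)$, the product in \eqref{eq:def_Gamma} telescopes to $\Gamma_k = \prod_{j=2}^{k}\tfrac{j-1}{j+1} = \tfrac{2}{k(k+1)}$ for all $k\ge 1$ (in particular $\Gamma_1 = 1$), so $1/\Gamma_k = k(k+1)/2$. Next I would check the hypotheses of Proposition~\ref{the:cat_main}: clearly $\gamma_1 = 1$, and by \eqref{err_condition_catalyst_1} we have $\beta_k + \varepsilon_k = \beta_k(1+\varepsilon) = \alpha_k$, so with $\mu = 0$ the condition \eqref{eq:cat_main_assumption} is just $\alpha_k\gamma_k^2/\Gamma_k \le \alpha_{k-1}\gamma_{k-1}^2/\Gamma_{k-1}$. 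Substituting $\beta_k = (k+1)L/k$, $\gamma_k^2 = 4/(k+1)^2$ and $\Gamma_k = 2/(k(k+1))$ yields $\alpha_k\gamma_k^2/\Gamma_k = \beta_k(1+\varepsilon)\cdot\tfrac{2k}{k+1} = 2L(1+\varepsilon)$, a constant in $k$, so \eqref{eq:cat_main_assumption} holds (indeed with equality); evaluating the same expression at $k=1$ gives $(\beta_1+\varepsilon_1)\gamma_1^2 = \alpha_1\gamma_1^2 = 2L(1+\varepsilon)$.

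With the hypotheses verified, I would apply Proposition~\ref{the:cat_main} at $x = x^*$, discard the nonnegative term $\tfrac{\alpha_K\gamma_K^2}{2}\|x^*-\bar x_K\|^2$ on the left, and substitute $\Gamma_K = 2/(K(K+1))$ together with the value of $(\beta_1+\varepsilon_1)\gamma_1^2$ found above, obtaining $\EE[f(\tilde x_K) - f(x^*)] \le \tfrac{2}{K(K+1)}\big(L(1+\varepsilon)\|x^*-\bar x_0\|^2 + \sum_{k=1}^K \delta_k/\Gamma_k\big)$. For the first term, $1+\varepsilon\le 2$ and $K(K+1)\ge K^2$ give the bound $\tfrac{4L}{K^2}\|x^*-\bar x_0\|^2$; for the second, $\delta_k \le \delta$ from \eqref{err_condition_catalyst_2} and $1/\Gamma_k = k(k+1)/2$ give $\sum_{k=1}^K \delta_k/\Gamma_k \le \tfrac{\delta}{2}\sum_{k=1}^K k(k+1) = \tfrac{\delta K(K+1)(K+2)}{6}$, and multiplying by $\tfrac{2}{K(K+1)}$ leaves $\tfrac{\delta(K+2)}{3}\le 2K\delta$ for $K\ge 1$. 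Summing the two contributions gives exactly \eqref{eq_opt_smooth_generic}.

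The argument is essentially bookkeeping, so I do not expect a genuine obstacle; the only point needing care is the exact telescoping in the first step. The choice $\beta_k = (k+1)L/k$ is precisely engineered so that $\alpha_k\gamma_k^2/\Gamma_k$ is \emph{exactly} constant in $k$ — were it only approximately constant, the recursion in Proposition~\ref{the:cat_main} would leak a logarithmic factor (as in earlier catalyst schemes), so making the algebraic identity close on the nose is the crux.
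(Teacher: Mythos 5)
Your proposal is correct and follows exactly the paper's route: the paper's own proof is a one-line appeal to Proposition~\ref{the:cat_main} after noting $\Gamma_k = \tfrac{2}{k(k+1)}$ and that \eqref{eq:cat_main_assumption} holds, and your argument simply carries out that verification (the identity $\alpha_k\gamma_k^2/\Gamma_k \equiv 2L(1+\varepsilon)$, the bound $\sum_{k=1}^K k(k+1) = K(K+1)(K+2)/3$, and the final estimates $1+\varepsilon\le 2$, $(K+2)/3\le 2K$) in full detail.
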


\begin{proof}
One can readily verify that \eqref{eq:cat_main_assumption} holds with the above choice of $(\alpha_k, \beta_k, \varepsilon_k, \gamma_k, \delta_k)$, and 
$
\Gamma_k = \frac{2}{k (k+1)}$  for any $k \geq 1$.
The rest of the claims then follow from a direct application of Theorem \ref{the:cat_main}.
\end{proof}

For smooth and strongly convex objectives ($\mu > 0$), we now describe a simple restarting technique applied to Algorithm \ref{alg:basic_cat}. 
The resulting method, presented in Algorithm \ref{alg_restart_opt_catalyst}, consists of multiple epochs.
Each epoch corresponds to running  Algorithm \ref{alg:basic_cat} starting  from the output of the previous epoch.

\begin{algorithm}[H]
\caption{\texttt{R-Catalyst}$(\cA)$: restarting catalyst for strongly convex problems}
\begin{algorithmic}
\STATE{ {\bf Input:} initial point $x_{(0)}$, to-be-catalyzed method $\cA$, total number of epochs $E$, epoch length $\cbr{K_e}$.}
\FOR{epoch $e = 1, 2, \ldots, E$}
\STATE{
Let $x_{(e)}$ be the output of  running $\texttt{Catalyst} (\cA)$ starting from $x_{(e-1)}$ for  $K_e$ iterations. 
}
\ENDFOR
\end{algorithmic} \label{alg_restart_opt_catalyst}
\end{algorithm}

We proceed to establish the convergence of \texttt{R-Catalyst}$(\cA)$ (Algorithm \ref{alg_restart_opt_catalyst}) when $\mu > 0$.

\begin{proposition}\label{prop_rcatalyst_general}
Suppose $\mu > 0$. Within the $e$-th epoch of \texttt{R-Catalyst}$(\cA)$ (Algorithm \ref{alg_restart_opt_catalyst}),  run
\texttt{Catalyst}$(\cA)$ (Algorithm~\ref{alg:basic_cat})  with 
\begin{align*}
 \gamma_{k} = \frac{2}{k+1}, ~ \beta_{k} =  \frac{(k+1)L}{k}
\end{align*}
for a total of 
$K_e = K \coloneqq 6 \sqrt{{L}/{\mu}}$ iterations.
In addition, suppose $\cbr{\alpha_k}$ is chosen such that there exists $\cbr{(\varepsilon_k, \delta_k)}$ certifying \eqref{eq:cp_inexact} with 
\begin{align}\label{err_condition_r_catalyst}
 \varepsilon_k = \beta_k  \varepsilon, ~ \alpha_k =  \beta_k (1+\varepsilon) ,  ~ \varepsilon \leq 1, 
\end{align}
and  
\begin{align}\label{err_condition_r_catalyst_eps_delta}
 \delta_{k} \leq \delta \coloneqq \frac{2^{-e-2}}{K} \sbr{f(x_{(0)}) - f(x^*) }.
 \end{align}
Then we have 
\begin{align*}
\EE \sbr{ f(x_{(e)}) - f(x^*) } \leq 2^{-e} \sbr{f(x_{(0)}) - f(x^*) }, ~ \forall e \geq 0.
\end{align*}
\end{proposition}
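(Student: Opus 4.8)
The plan is to prove the claim by induction on the epoch index $e$, using Proposition~\ref{prop_smooth_opt_generic} as the per-epoch contraction estimate together with strong convexity to convert a function-value bound at the initial point into one at the optimum, and vice versa. The base case $e=0$ is trivial. For the inductive step, suppose $\EE[f(x_{(e-1)}) - f(x^*)] \le 2^{-(e-1)}[f(x_{(0)}) - f(x^*)]$.

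First I would apply Proposition~\ref{prop_smooth_opt_generic} to the $e$-th epoch, which runs $\texttt{Catalyst}(\cA)$ from $x_{(e-1)} = \bar x_0$ for $K = 6\sqrt{L/\mu}$ iterations. Since the parameter choices and error conditions \eqref{err_condition_r_catalyst}--\eqref{err_condition_r_catalyst_eps_delta} match those in \eqref{gamma_beta_choice_catalyst}--\eqref{err_condition_catalyst_2} (the value of $\delta$ is just specialized), \eqref{eq_opt_smooth_generic} gives
\begin{align*}
\EE[f(x_{(e)}) - f(x^*)] \le \tfrac{4L}{K^2}\EE\|x^* - x_{(e-1)}\|^2 + 2K\delta.
\end{align*}
Next I would use $\mu$-strong convexity of $f$ — namely $f(x_{(e-1)}) - f(x^*) \ge \tfrac{\mu}{2}\|x_{(e-1)} - x^*\|^2$, which follows from \eqref{eq:phi_convexity} by the standard argument (take $x_1 = x_{(e-1)}$, $x_2 = x^*$, let $\gamma \to 0$, and use optimality of $x^*$) — to bound $\EE\|x^* - x_{(e-1)}\|^2 \le \tfrac{2}{\mu}\EE[f(x_{(e-1)}) - f(x^*)]$. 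Substituting and using $K^2 = 36 L/\mu$ yields $\tfrac{4L}{K^2}\cdot\tfrac{2}{\mu} = \tfrac{8L}{36L} = \tfrac{2}{9} \le \tfrac14$, so the first term is at most $\tfrac14 \EE[f(x_{(e-1)}) - f(x^*)] \le \tfrac14 \cdot 2^{-(e-1)}[f(x_{(0)}) - f(x^*)] = 2^{-(e+1)}[f(x_{(0)}) - f(x^*)]$. For the second term, $2K\delta = 2K \cdot \tfrac{2^{-e-2}}{K}[f(x_{(0)}) - f(x^*)] = 2^{-(e+1)}[f(x_{(0)}) - f(x^*)]$. Adding the two contributions gives $\EE[f(x_{(e)}) - f(x^*)] \le 2^{-e}[f(x_{(0)}) - f(x^*)]$, completing the induction.

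There is no serious obstacle here; the only point requiring care is bookkeeping the constants so that the deterministic-error coefficient $\tfrac{8L}{\mu K^2}$ and the noise term $2K\delta$ each come out bounded by half of the target, which forces the specific choices $K = 6\sqrt{L/\mu}$ and $\delta = 2^{-e-2}[f(x_{(0)})-f(x^*)]/K$. I would also note in passing that a subtlety implicitly handled is that the per-epoch error tolerance $\delta$ is fixed in terms of the \emph{original} gap $f(x_{(0)}) - f(x^*)$ rather than the current one, so that the subroutine $\cA$ is being asked for a progressively tighter (absolute) accuracy as $e$ grows — but this does not affect the convergence statement itself, only the complexity accounting done later. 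Finally, I would remark that the expectations are nested across epochs, so "taking total expectation" and invoking the tower property is what lets the single-epoch bound \eqref{eq_opt_smooth_generic} chain into the multi-epoch recursion.
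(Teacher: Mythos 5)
Your proposal is correct and follows essentially the same argument as the paper: induction on the epoch index, invoking Proposition~\ref{prop_smooth_opt_generic} for the per-epoch bound, then strong convexity to convert $\|x^*-x_{(e-1)}\|^2$ into the previous optimality gap, with the identical constant bookkeeping ($\tfrac{8L}{\mu K^2}\le\tfrac14$ and $2K\delta = 2^{-e-1}[f(x_{(0)})-f(x^*)]$). No gaps.
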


\begin{proof}
We proceed with an inductive argument. The claim clearly holds at $e = 0$. 
Now suppose the claim holds for $e-1$ for any $e \geq 1$, then 
\begin{align*}
\EE \sbr{ f(x_{(e)}) - f(x^*) }
& \overset{(a)}{\leq} \frac{4L}{ K^2} \EE \sbr{ \norm{x^* - x_{(e-1)}}^2 }  + 2 K \delta \\
& \overset{(b)}{\leq} \frac{8 L}{ \mu K^2} \EE \sbr{f(x_{(e-1)}) - f(x^*)} + 2 K \delta \\
& \overset{(c)}{\leq} \frac{1}{4} \EE \sbr{f(x_{(e-1)}) - f(x^*)}  + 2^{-e-1} \sbr{f(x_{(0)}) - f(x^*) } \\
& \overset{(d)}{\leq}  2^{-e} \sbr{f(x_{(0)}) - f(x^*) } ,
\end{align*} 
where $(a)$ follows from the direct application of  \eqref{eq_opt_smooth_generic} in Proposition \ref{prop_smooth_opt_generic},
$(b)$ follows from the strong convexity of $f$, 
$(c)$ follows from the definition of $K$ and $\delta$,
and $(d)$ follows from the induction hypothesis.
\end{proof}

\subsection{Catalyst for Stochastic Gradient Descent}\label{subsec_catalyst_sgd}

Up to now, the proposed catalyst scheme is a conceptual method, as it requires the to-be-catalyzed method $\cA$ to satisfy certain error condition \eqref{eq:cp_inexact} when solving the proximal step \eqref{eq:cp_subproblem}.
We now introduce a simple first-order method with this desired capability.
To proceed, we assume the access to a stochastic first-order oracle, defined as follows. 
\begin{definition}
The stochastic first-order oracle $\texttt{SFO}(f;x, \xi)$ for a differentiable function $f$ outputs the stochastic gradient $\nabla f(x; \xi)$ such that 
\begin{align*}
\EE_{\xi} \sbr{\nabla f(x; \xi)} = \nabla f(x), ~ \EE_{\xi} \norm{\nabla f(x; \xi) - \nabla f(x) }_*^2 \leq \sigma^2.
\end{align*}  
\end{definition}
Clearly,  $\texttt{SFO}(f;x, \xi)$ outputs the deterministic gradient almost surely when $\sigma = 0$.
Going forward, for any method, we refer to its total number of calls to \texttt{SFO} as its sample complexity. 
The proposed Procedure \ref{procedure_sgd}, termed \texttt{SGD}, can be viewed a simple adaptation of the stochastic gradient descent method for solving 
\begin{align}
\min_{u \in X}\{ \phi(u) \coloneqq f(u) + \tfrac{\beta}{2} \norm{u - \hat{x}}^2\}.
\end{align} 
Upon termination, Procedure \ref{procedure_sgd} returns both an ergodic mean of historical iterates and the last iterate. 
As will be clarified later, setting proper weights in the ergodic mean suffices to certify condition \eqref{eq:cp_inexact}. 
We use $L_\phi$ and $\mu_\phi$ denote the smoothness and strong-convexity modulus of $\phi$, respectively.

\begin{procedure}[H]
\caption{\texttt{SGD}$(\phi)$: stochastic gradient descent for $\min_{u \in X}\{ \phi(u) \coloneqq f(u) + \tfrac{\beta}{2} \norm{u - \hat{x}}^2\}$}
\begin{algorithmic}
\STATE {\bf Input:} stepsizes $\cbr{\eta_t}$, total number of steps $T > 0$, initial point $u_0 = \hat{x}$.
\FOR{t = 1, 2, \ldots, T}
\STATE Form  $g_{t-1} = \nabla f(u_{t-1}; \xi_{t-1}) + \beta (u_{t-1} - \hat{x})$, where $ \nabla f(u_{t-1}; \xi_{t-1}) = \texttt{SFO}(f; u_{t-1}, \xi_{t-1})$. 
\STATE $u_{t} = \argmin_{w \in X} \inner{g_{t-1}}{u} + \frac{1}{2\eta_t} \norm{u - u_{t-1}}^2.$
\ENDFOR
\STATE{Compute $ \overline{u}_T = \frac{\Lambda_T}{ (1-\Lambda_T)} \tsum_{t=1}^T \frac{ \eta_t \mu_\phi}{\Lambda_t} u_t$,
with $\Lambda_t$ defined in as \eqref{eq_def_Lambda}.}
\STATE {\bf Output:} $(\overline{u}_T, u_T)$.
\end{algorithmic}\label{procedure_sgd}
\end{procedure}

We next establish the generic convergence properties of Procedure~\ref{procedure_sgd}.

\begin{lemma}\label{prop_sgd_generic_convergence}
Let $\eta_t < 1/L_\phi$ and define 
\begin{align}\label{eq_def_Lambda}
\Lambda_t = \begin{cases}
(1-\eta_t \mu_\phi) \Lambda_{t-1}, ~ & t \geq 1, \\
1 ~ & t = 0.
\end{cases}
\end{align}
Then for any $T \geq 1$, we have 
 \begin{align*}
 \EE \sbr{\phi(\overline{u}_T) - \phi(u)}
 + \frac{\mu_\phi}{2(1-\Lambda_T)} \EE \norm{u - u_T}^2 
 \leq \frac{\Lambda_T \mu_\phi} {2(1-\Lambda_T)} \norm{u_0 - u}^2 + \frac{\Lambda_T } {2(1-\Lambda_T)} \tsum_{t=1}^T  \frac{\mu_\phi \eta_t^2 \sigma^2}{\Lambda_t (1 - L_\phi\eta_t)}.
 \end{align*}
\end{lemma}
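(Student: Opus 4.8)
The plan is to run the standard one-step analysis of the projected stochastic gradient step on the strongly convex objective $\phi$, and then aggregate the per-step inequalities with the weights $\eta_t \mu_\phi/\Lambda_t$ so that the telescoping leaves exactly the claimed bound. First I would fix $u \in X$ and analyze a single iteration. Starting from the optimality condition of the prox-update $u_t = \argmin_{w \in X}\{\langle g_{t-1}, w\rangle + \frac{1}{2\eta_t}\|w - u_{t-1}\|^2\}$, I get the three-point inequality $\langle g_{t-1}, u_t - u\rangle \le \frac{1}{2\eta_t}(\|u_{t-1}-u\|^2 - \|u_t - u\|^2 - \|u_t - u_{t-1}\|^2)$. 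Writing $g_{t-1} = \nabla\phi(u_{t-1}) + \Delta_{t-1}$ with $\Delta_{t-1} := \nabla f(u_{t-1};\xi_{t-1}) - \nabla f(u_{t-1})$ the zero-mean noise, I split $\langle \nabla\phi(u_{t-1}), u_t - u\rangle = \langle \nabla\phi(u_{t-1}), u_{t-1}-u\rangle + \langle\nabla\phi(u_{t-1}), u_t - u_{t-1}\rangle$. The first term is lower bounded using $\mu_\phi$-strong convexity by $\phi(u_{t-1}) - \phi(u) + \frac{\mu_\phi}{2}\|u_{t-1}-u\|^2$; the second, combined with $L_\phi$-smoothness ($\phi(u_t) \le \phi(u_{t-1}) + \langle\nabla\phi(u_{t-1}), u_t - u_{t-1}\rangle + \frac{L_\phi}{2}\|u_t - u_{t-1}\|^2$), gives $\langle\nabla\phi(u_{t-1}), u_t-u_{t-1}\rangle \ge \phi(u_t) - \phi(u_{t-1}) - \frac{L_\phi}{2}\|u_t - u_{t-1}\|^2$.

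Putting these together yields, after rearranging,
\begin{align*}
\phi(u_t) - \phi(u) + \tfrac{1}{2\eta_t}\|u_t - u\|^2 \le (1-\eta_t\mu_\phi)\tfrac{1}{2\eta_t}\|u_{t-1}-u\|^2 - \tfrac{1-L_\phi\eta_t}{2\eta_t}\|u_t - u_{t-1}\|^2 - \langle\Delta_{t-1}, u_t - u\rangle.
\end{align*}
For the noise term I use the standard trick $-\langle\Delta_{t-1}, u_t - u\rangle = -\langle\Delta_{t-1}, u_{t-1}-u\rangle - \langle\Delta_{t-1}, u_t - u_{t-1}\rangle$, bound the second piece by Young's inequality $\langle -\Delta_{t-1}, u_t - u_{t-1}\rangle \le \frac{\eta_t}{2(1-L_\phi\eta_t)}\|\Delta_{t-1}\|_*^2 + \frac{1-L_\phi\eta_t}{2\eta_t}\|u_t - u_{t-1}\|^2$ to cancel the negative $\|u_t - u_{t-1}\|^2$ term, and note that $\EE[\langle\Delta_{t-1}, u_{t-1}-u\rangle \mid \mathcal{F}_{t-1}] = 0$ since $u_{t-1}$ is $\mathcal{F}_{t-1}$-measurable. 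Taking conditional expectation and using $\EE\|\Delta_{t-1}\|_*^2 \le \sigma^2$ gives the clean recursion $\EE[\phi(u_t) - \phi(u)] + \frac{1}{2\eta_t}\EE\|u_t - u\|^2 \le \frac{1-\eta_t\mu_\phi}{2\eta_t}\EE\|u_{t-1}-u\|^2 + \frac{\eta_t\sigma^2}{2(1-L_\phi\eta_t)}$.

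Finally I multiply the $t$-th recursion by the weight $w_t := \eta_t\mu_\phi/\Lambda_t$ and sum over $t = 1,\dots,T$. Because $\Lambda_t = (1-\eta_t\mu_\phi)\Lambda_{t-1}$, one checks $w_t \cdot \frac{1-\eta_t\mu_\phi}{2\eta_t} = \frac{\mu_\phi}{2\Lambda_{t-1}}$ and $w_t\cdot\frac{1}{2\eta_t} = \frac{\mu_\phi}{2\Lambda_t}$, so the distance terms telescope: $\sum_{t=1}^T(\frac{\mu_\phi}{2\Lambda_t}\EE\|u_t-u\|^2 - \frac{\mu_\phi}{2\Lambda_{t-1}}\EE\|u_{t-1}-u\|^2) = \frac{\mu_\phi}{2\Lambda_T}\EE\|u_T-u\|^2 - \frac{\mu_\phi}{2}\|u_0-u\|^2$. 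The weighted function-value terms give $\sum_{t=1}^T w_t\,\EE[\phi(u_t)-\phi(u)]$, and since $\sum_{t=1}^T w_t = \sum_{t=1}^T(\frac{1}{\Lambda_t} - \frac{1}{\Lambda_{t-1}}) = \frac{1}{\Lambda_T} - 1 = \frac{1-\Lambda_T}{\Lambda_T}$, convexity of $\phi$ and the definition $\overline{u}_T = \frac{\Lambda_T}{1-\Lambda_T}\sum_{t=1}^T \frac{\eta_t\mu_\phi}{\Lambda_t}u_t$ (a convex combination of the $u_t$) give $\frac{1-\Lambda_T}{\Lambda_T}(\phi(\overline{u}_T)-\phi(u)) \le \sum_{t=1}^T w_t(\phi(u_t)-\phi(u))$. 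Collecting terms and multiplying through by $\frac{\Lambda_T}{1-\Lambda_T}$ produces exactly the stated inequality, with the noise term $\frac{\Lambda_T}{1-\Lambda_T}\sum_{t=1}^T\frac{\mu_\phi\eta_t^2\sigma^2}{\Lambda_t(1-L_\phi\eta_t)}$. I expect the main obstacle to be bookkeeping rather than conceptual: getting the noise-term splitting and the Young's-inequality constant exactly right so the $\|u_t - u_{t-1}\|^2$ terms cancel, and verifying that the weights interact correctly with the recursively defined $\Lambda_t$ so the telescoping is exact; the condition $\eta_t < 1/L_\phi$ is what keeps all the relevant coefficients positive.
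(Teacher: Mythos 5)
Your proposal is correct and follows essentially the same route as the paper's proof in Appendix \ref{sec_supp}: the same one-step recursion (smoothness plus strong convexity plus the prox three-point inequality, with Young's inequality absorbing the noise--displacement cross term into $\frac{\eta_t\sigma^2}{2(1-L_\phi\eta_t)}$), the same weights $\eta_t\mu_\phi/\Lambda_t$ for telescoping, and the same Jensen step on $\overline{u}_T$. The only discrepancy is a transcription slip in your final sentence, where the noise term should carry the factor $\frac{\Lambda_T}{2(1-\Lambda_T)}$ rather than $\frac{\Lambda_T}{1-\Lambda_T}$; your own computation ($w_t$ times $\frac{\eta_t\sigma^2}{2(1-L_\phi\eta_t)}$, summed and scaled by $\frac{\Lambda_T}{1-\Lambda_T}$) already produces the correct constant.
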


The proof of Lemma \ref{prop_sgd_generic_convergence} is deferred to Appendix \ref{sec_supp}.
With Lemma \ref{prop_sgd_generic_convergence} in place, we can proceed to establish that Procedure \ref{procedure_sgd} produces solutions satisfying condition \eqref{eq:cp_inexact} with proper parameter specifications.

\begin{proposition}\label{prop_sgd_err_condition}
Let $(\overline{u}_T, u_T)$ be the output  of \texttt{SGD}$(\phi)$ (Procedure \ref{procedure_sgd}),
then  
\begin{align*}
\EE\sbr{\phi(\overline{u}_T) -\phi(u) + \tfrac{ \alpha \mu_\phi}{2}  \norm{\tilde u - u_T}^2} \leq  \tfrac{ \varepsilon \mu_\phi }{2}  \norm{u  - \hat{x}}^2 + \delta, ~ \forall u \in X, 
\end{align*}
with 
\begin{align}\label{sgd_err_condition_alpha_epsilon_delta}
\varepsilon = \frac{ \Lambda_T }{1 - \Lambda_T}, ~ 
\alpha = \frac{1}{1 - \Lambda_T} \equiv  1 + \varepsilon,~ 
\delta = \frac{\Lambda_T } {2(1-\Lambda_T)} \tsum_{t=1}^T  \frac{\mu_\phi \eta_t^2 \sigma^2}{\Lambda_t (1 - L_\phi\eta_t)}.
 \end{align}
 In particular, let
 \begin{align}\label{sgd_err_condition_param_choice}
 \eta_t = \frac{2}{\mu_\phi (t + t_0)} , ~  t_0 \geq \frac{4 L_\phi}{\mu_\phi}.
\end{align}
 Then for $T \geq t_0$, we have 
 \begin{align}\label{sgd_err_condition_epsilon_delta_ub}
 \varepsilon \leq  1, ~
 \delta
\leq \frac{32 \sigma^2}{\mu_\phi T}.
 \end{align}
 \end{proposition}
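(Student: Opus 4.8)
The plan is to obtain the stated error condition directly from Lemma~\ref{prop_sgd_generic_convergence}, and then to verify the bounds on $\varepsilon$ and $\delta$ by a closed-form evaluation of the weights $\Lambda_t$. For the first part, observe that Procedure~\ref{procedure_sgd} is initialized at $u_0 = \hat x$; substituting $u_0 = \hat x$ into the conclusion of Lemma~\ref{prop_sgd_generic_convergence} and reading off the coefficients immediately gives the claimed inequality with $\varepsilon, \alpha, \delta$ as in \eqref{sgd_err_condition_alpha_epsilon_delta}. The identity $\alpha = 1 + \varepsilon$ is just the elementary rewriting $\frac{1}{1-\Lambda_T} = 1 + \frac{\Lambda_T}{1-\Lambda_T}$.

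For the second part, I would first compute $\Lambda_t$ explicitly under the choice $\eta_t = \frac{2}{\mu_\phi(t+t_0)}$. Here $1 - \eta_t \mu_\phi = \frac{t+t_0-2}{t+t_0}$, so the product in \eqref{eq_def_Lambda} telescopes to $\Lambda_T = \frac{(t_0-1)t_0}{(T+t_0-1)(T+t_0)}$, and more generally $\frac{\Lambda_T}{\Lambda_t} = \frac{(t+t_0-1)(t+t_0)}{(T+t_0-1)(T+t_0)}$. Since $T \ge t_0$, this gives $\Lambda_T \le \frac{t_0^2}{(2t_0)^2} = \frac14$, hence $1-\Lambda_T \ge \frac34$ and $\varepsilon = \frac{\Lambda_T}{1-\Lambda_T} \le \frac13 \le 1$, which is the first half of \eqref{sgd_err_condition_epsilon_delta_ub}.

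The one place requiring a little care is the estimate of $\delta$: one should not bound $\Lambda_T$ and $1/\Lambda_t$ separately, since that loses the telescoping cancellation and spoils the $1/T$ rate. Instead I would keep the product together, $\frac{\Lambda_T}{\Lambda_t}\eta_t^2 = \frac{4(t+t_0-1)}{\mu_\phi^2(t+t_0)(T+t_0-1)(T+t_0)} \le \frac{4}{\mu_\phi^2(T+t_0-1)(T+t_0)}$, and combine it with $1 - L_\phi\eta_t \ge \frac12$ --- which is exactly where the hypothesis $t_0 \ge \frac{4L_\phi}{\mu_\phi}$ enters, since then $L_\phi\eta_t \le \frac{2L_\phi}{\mu_\phi t_0} \le \frac12$. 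Each of the $T$ summands in the expression for $\delta$ is then at most $\frac{8\sigma^2}{\mu_\phi(T+t_0-1)(T+t_0)}$, so using $1-\Lambda_T \ge \frac34$ together with $(T+t_0-1)(T+t_0) \ge T^2$ yields $\delta \le \frac{16\sigma^2}{3\mu_\phi T} \le \frac{32\sigma^2}{\mu_\phi T}$, completing \eqref{sgd_err_condition_epsilon_delta_ub}. No step is a genuine obstacle; the only subtlety is treating the $\Lambda_T/\Lambda_t$ ratio in $\delta$ as a unit rather than term by term.
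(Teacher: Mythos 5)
Your proposal is correct and follows essentially the same route as the paper: both parts read the error condition \eqref{sgd_err_condition_alpha_epsilon_delta} directly off Lemma~\ref{prop_sgd_generic_convergence} with $u_0=\hat x$, compute $\Lambda_t$ in closed form under \eqref{sgd_err_condition_param_choice}, and bound the sum defining $\delta$ term by term using $1-L_\phi\eta_t\ge\tfrac12$; the paper's bookkeeping differs only in that it bounds $\eta_t^2/\Lambda_t$ uniformly by $O(1/(\mu_\phi^2 t_0^2))$ and then uses $\Lambda_T\lesssim t_0^2/T^2$, which preserves the $1/T$ rate just as your ``keep $\Lambda_T/\Lambda_t$ as a unit'' version does. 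As a minor point in your favor, your telescoped formula $\Lambda_T=\frac{(t_0-1)t_0}{(T+t_0-1)(T+t_0)}$ is the correct one for the stated stepsize, whereas the paper's appendix displays an index-shifted variant; this does not affect either argument's conclusions.
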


The proof of Proposition \ref{prop_sgd_err_condition} can be found in Appendix \ref{sec_supp}.
We are now ready to establish the SFO complexity of the proposed \texttt{Catalyst}$(\texttt{SGD})$ applied to \eqref{cp}  with smooth objectives.

\begin{theorem}\label{thrm_Tonsc_opt}
Suppose $\mu = 0$.
For any $\epsilon > 0$, 
run  \texttt{Catalyst}$(\texttt{SGD})$ with  
\begin{align*}
K = 4 \sqrt{\frac{L \norm{x^* - \overline{x}_0}^2}{\epsilon}}, ~ 
\gamma_k = \frac{2}{k+1}, ~ \beta_k =  \frac{(k+1)L}{k},
~ \alpha_k = \frac{\beta_k}{1- \Lambda_{T} },
\end{align*}
where 
\begin{align*}
T = 8 + \frac{32 \sigma^2 K}{L \epsilon}, ~ 
\Lambda_T = \frac{90}{(T+9) (T +10)}.
\end{align*} 
At the $k$-th iteration, let proximal step \eqref{eq:cp_subproblem} be solved by running \texttt{SGD}$(\phi_k)$ with stepsize 
\begin{align*}
 \eta_t = \frac{2}{\beta_k (t + 8)}.
\end{align*}
Then we have 
$
 \EE[f(\tilde x_K) - f(x^*)]  \leq \epsilon.
$
The number of calls to \texttt{SFO} is bounded by 
\begin{align*}
\cO \rbr{
 \sqrt{\frac{L \norm{x^* - \overline{x}_0}^2}{\epsilon}} + 
 \frac{\sigma^2 \norm{x^* - \overline{x}_0}^2}{\epsilon^2}
 }.
\end{align*}
\end{theorem}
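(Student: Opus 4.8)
The plan is to stitch together the two convergence results already established: Proposition~\ref{prop_smooth_opt_generic}, which controls the outer catalyst iteration when $\mu=0$, and Proposition~\ref{prop_sgd_err_condition}, which certifies that \texttt{SGD}$(\phi_k)$ meets the inexactness condition \eqref{eq:cp_inexact} with explicit $(\varepsilon,\delta)$. The work is therefore mostly bookkeeping: verify that the stated parameter choices are admissible, track the accumulated error, and count \texttt{SFO} calls.

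First I would check the inner loop. For the proximal subproblem $\phi_k(u) = f(u) + \tfrac{\beta_k}{2}\|u-\hat x_k\|^2$ we have $\mu_{\phi_k} = \beta_k$ and $L_{\phi_k} = L + \beta_k \le 2\beta_k$ (since $\beta_k = (k+1)L/k \ge L$), so the requirement $t_0 \ge 4L_{\phi_k}/\mu_{\phi_k}$ in \eqref{sgd_err_condition_param_choice} is satisfied by $t_0 = 8$; matching $\eta_t = \tfrac{2}{\mu_{\phi_k}(t+t_0)} = \tfrac{2}{\beta_k(t+8)}$ confirms the stepsize choice. Running \texttt{SGD}$(\phi_k)$ for $T$ steps, Proposition~\ref{prop_sgd_err_condition} gives $\varepsilon_k = \tfrac{\Lambda_T}{1-\Lambda_T}$, $\alpha_k = \tfrac{\beta_k}{1-\Lambda_T} = \beta_k(1+\varepsilon_k)$, and $\delta_k \le \tfrac{32\sigma^2}{\beta_k T} \le \tfrac{32\sigma^2}{L T}$. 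With $T = 8 + \tfrac{32\sigma^2 K}{L\epsilon} \ge 8 = t_0$, one computes $\Lambda_T = \tfrac{90}{(T+9)(T+10)}$ (the general formula $\Lambda_T = \prod_{t=1}^T(1-\tfrac{2}{t+t_0})= \tfrac{t_0(t_0-1)}{(T+t_0)(T+t_0-1)}$ with $t_0=8$ gives $\tfrac{56}{(T+8)(T+7)}$ — I would double-check the index convention the paper uses for $\Lambda_T$ against \eqref{eq_def_Lambda}, but in any case $\Lambda_T \le 1/2$, so $\varepsilon_k \le 1$ and $\alpha_k/\beta_k \le 2$ as required by \eqref{err_condition_catalyst_1}). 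Crucially, the bound $\delta_k \le \tfrac{32\sigma^2}{LT} \le \tfrac{32\sigma^2}{L}\cdot\tfrac{L\epsilon}{32\sigma^2 K} = \tfrac{\epsilon}{K}$ gives a uniform $\delta = \epsilon/K$ for \eqref{err_condition_catalyst_2}.

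Next I would apply Proposition~\ref{prop_smooth_opt_generic} with this $\varepsilon$ and $\delta$, yielding
\begin{align*}
\EE[f(\tilde x_K) - f(x^*)] \le \tfrac{4L}{K^2}\|x^* - \bar x_0\|^2 + 2K\delta \le \tfrac{4L}{K^2}\|x^*-\bar x_0\|^2 + 2\epsilon.
\end{align*}
With $K = 4\sqrt{L\|x^*-\bar x_0\|^2/\epsilon}$ the first term is $\tfrac{4L\|x^*-\bar x_0\|^2}{16 L\|x^*-\bar x_0\|^2/\epsilon} = \tfrac{\epsilon}{4}$, so the total is at most $\tfrac{9\epsilon}{4}$ — a constant rescaling of $\epsilon$ (or tightening $\delta$ by an absolute constant) closes this; I would note this is absorbed into the $\cO$ of the complexity bound. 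Finally, the \texttt{SFO} count is $\sum_{k=1}^K T = KT = K(8 + \tfrac{32\sigma^2 K}{L\epsilon}) = 8K + \tfrac{32\sigma^2 K^2}{L\epsilon}$; substituting $K = 4\sqrt{L\|x^*-\bar x_0\|^2/\epsilon}$ gives $8K = \cO(\sqrt{L\|x^*-\bar x_0\|^2/\epsilon})$ and $\tfrac{32\sigma^2 K^2}{L\epsilon} = \tfrac{32\sigma^2}{L\epsilon}\cdot\tfrac{16L\|x^*-\bar x_0\|^2}{\epsilon} = \cO(\sigma^2\|x^*-\bar x_0\|^2/\epsilon^2)$, which is exactly the claimed bound.

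The main obstacle I anticipate is not conceptual but arithmetic consistency: making sure the specific constants $t_0=8$, the formula $\Lambda_T = \tfrac{90}{(T+9)(T+10)}$, and the factor-$32$ in $T$ all line up with the chain $\varepsilon_k\le 1$, $\delta_k \le \epsilon/K$, and the final $\cO$-bound, given that the paper's $\Lambda_T$ indexing in \eqref{eq_def_Lambda} and the shifted stepsize $\eta_t = \tfrac{2}{\beta_k(t+8)}$ must be reconciled carefully. Everything else follows by direct substitution into the two cited propositions.
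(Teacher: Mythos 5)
Your proposal is correct and follows essentially the same route as the paper: certify the inexactness condition \eqref{eq:cp_inexact} via Proposition~\ref{prop_sgd_err_condition}, feed the resulting $(\varepsilon,\delta)$ into Proposition~\ref{prop_smooth_opt_generic}, and multiply $K$ by $T$ for the \texttt{SFO} count. The two constant-level discrepancies you flag (the stated $T$ only yields $\delta_k\le\epsilon/K$ rather than the $\epsilon/(4K)$ the outer bound would want, and the $\Lambda_T$ indexing giving $\tfrac{56}{(T+7)(T+8)}$ versus the stated $\tfrac{90}{(T+9)(T+10)}$) are genuinely present in the paper and, as you note, are absorbed by a constant rescaling without affecting the $\cO$-bounds.
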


\begin{proof}
 Given the choice of $\cbr{(\gamma_k, \beta_k)}$, 
 suppose \eqref{err_condition_catalyst_1} and \eqref{err_condition_catalyst_2}  are satisfied with $\delta = \frac{\epsilon}{4K}$,    we can invoke Proposition \ref{prop_smooth_opt_generic} and obtain
 \begin{align*}
 \EE[f(\tilde x_K) - f(x^*)] 
\le \tfrac{4L }{ K^2}  \norm{x^* - \bar x_{0} }^2 + 2 K \delta  \leq \epsilon,
 \end{align*}
 where the last inequality follows from the definition of $K$ and $\delta$.
 It remains to determine the number of steps $T$ and stepsizes $\cbr{\eta_t}$ required by \texttt{SGD} at iteration $k$ so that \eqref{err_condition_catalyst_1} and  \eqref{err_condition_catalyst_2} hold with the choice of $\delta$, and consequently determine the value of $\alpha_k$ therein. 

Given the choice of $\beta_k$,  the subproblem \eqref{eq:cp_subproblem} is $\beta_k$ strongly convex and $2 \beta_k$ smooth. 
Combining this observation with \eqref{sgd_err_condition_alpha_epsilon_delta}, \eqref{sgd_err_condition_param_choice}  and \eqref{sgd_err_condition_epsilon_delta_ub}, 
together with the choice of $\beta_k = \frac{(k+1)L}{k}$, 
it can be readily verified that \eqref{err_condition_catalyst_1} is satisfied by choosing 
\begin{align*}
\eta_t = \frac{2}{ \beta_k (t + 8)}, ~ 
 T \geq   8,
 ~  \alpha_k = \frac{\beta_k}{1 - \Lambda_T }.
\end{align*}
In addition, to satisfy \eqref{err_condition_catalyst_2}, from \eqref{sgd_err_condition_epsilon_delta_ub} and the choice of $\beta_k$ it suffices to choose 
\begin{align*}
T = 8 + \frac{32 \sigma^2}{L \delta} .
\end{align*} 
 The total number of calls to SFO is bounded by 
 \begin{align*}
 T \cdot K =
 \cO \rbr{
 K + \frac{\sigma^2}{L \delta} K
 }
 =
 \cO \left(
 \sqrt{\frac{L \norm{x^* - \overline{x}_0}^2}{\epsilon}} + 
 \frac{\sigma^2 \norm{x^* - \overline{x}_0}^2}{\epsilon^2}
 \right).
 \end{align*}
 The proof is then completed.
\end{proof}

As the last result of this section, we  proceed to establish the SFO complexity of \texttt{R-Catalyst}$(\texttt{SGD})$ applied to \eqref{cp} with a smooth and strongly convex objective.

\begin{theorem}\label{thrm_sc_opt}
Suppose $\mu > 0$.
For any $\epsilon > 0$, 
run  \texttt{R-Catalyst}$(\texttt{SGD})$ for a total of  $E = \log_{2}(\frac{f(x_{(0)}) - f(x^*)}{\epsilon})$ epochs.
The $e$-th epoch corresponds to running \texttt{Catalyst}$(\texttt{SGD})$, initialized at $x_{(e)}$, with parameters
\begin{align*}
K = 6 \sqrt{\frac{L}{\mu}}, ~ \gamma_k = \frac{2}{k+1}, ~ \beta_k =  \frac{(k+1)L}{k},
~ ~\alpha_k = \frac{\beta_k }{1 - \Lambda_{T} },
\end{align*}
where 
\begin{align*}
T = 8 + \frac{128 \sigma^2 2^{e} K}{L (f(x_{(0)} )- f(x^*))}, ~
\Lambda_T = \frac{90}{(T+9) (T +10)}.
\end{align*}
In addition, at  the $k$-th iteration  of the $e$-th epoch, the proximal step \eqref{eq:cp_subproblem} is solved by running \texttt{SGD}$(\phi_k)$ with 
$
 \eta_t = \frac{2}{\beta_k (t + 8)}.
$
Then we have 
\begin{align*}
 \EE[f(\tilde x_K) - f(x^*)]  \leq \epsilon.
\end{align*}
In addition, the number of calls to \texttt{SFO} is bounded by 
\begin{align}\label{r_catalyst_sgd_strongly_convex_complexity}
\cO \rbr{
 \sqrt{\frac{L}{\mu}} \log_{2} \left(\frac{f(x_{(0)}) - f(x^*)}{\epsilon}\right)
+ \frac{\sigma^2}{\mu \epsilon}
 }.
\end{align}

\end{theorem}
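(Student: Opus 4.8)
The plan is to combine the epoch-wise contraction from Proposition \ref{prop_rcatalyst_general} with the per-epoch SFO bound implicit in the proof of Theorem \ref{thrm_Tonsc_opt}, then sum the sample counts over the geometrically-decaying epochs. First I would verify the accuracy guarantee: by Proposition \ref{prop_rcatalyst_general}, provided that within each epoch $e$ the inexactness parameters satisfy \eqref{err_condition_r_catalyst} and $\delta_k \le \delta \coloneqq \tfrac{2^{-e-2}}{K}[f(x_{(0)}) - f(x^*)]$, one has $\EE[f(x_{(e)}) - f(x^*)] \le 2^{-e}[f(x_{(0)}) - f(x^*)]$ for all $e \ge 0$. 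Taking $E = \log_2\big(\tfrac{f(x_{(0)}) - f(x^*)}{\epsilon}\big)$ epochs then yields $\EE[f(x_{(E)}) - f(x^*)] \le \epsilon$, which is exactly the claimed convergence (the output $\tilde x_K$ is $x_{(E)}$).

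The substantive part is to certify that, at iteration $k$ of epoch $e$, running \texttt{SGD}$(\phi_k)$ with $\eta_t = \tfrac{2}{\beta_k(t+8)}$ for $T$ steps produces iterates obeying \eqref{err_condition_r_catalyst} and $\delta_k \le \delta$. This is essentially the same bookkeeping as in the proof of Theorem \ref{thrm_Tonsc_opt}: since $\phi_k$ is $\beta_k$-strongly convex and $2\beta_k$-smooth, one has $\mu_{\phi_k} = \beta_k$, $L_{\phi_k} = 2\beta_k$, so $t_0 = 8 \ge 4L_{\phi_k}/\mu_{\phi_k}$ and the stepsize choice matches \eqref{sgd_err_condition_param_choice}. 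Proposition \ref{prop_sgd_err_condition} then gives $\varepsilon_k = \beta_k \varepsilon$ with $\varepsilon = \Lambda_T/(1-\Lambda_T) \le 1$, $\alpha_k = \beta_k/(1-\Lambda_T)$, and $\delta_k \le 32\sigma^2/(\beta_k T) \le 32\sigma^2/(L T)$ using $\beta_k \ge L$. One needs to check the explicit form $\Lambda_T = \tfrac{90}{(T+9)(T+10)}$ is consistent with $t_0 = 8$ (so $\Lambda_T = \prod_{t=1}^T \tfrac{t+6}{t+8} = \tfrac{7\cdot 8}{(T+7)(T+8)}$ — I would double-check the constant here, but the $\Theta(1/T^2)$ scaling is what matters and the stated constant is an over-estimate that still satisfies $\varepsilon \le 1$). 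Requiring $\delta_k \le \delta = \tfrac{2^{-e-2}}{K}[f(x_{(0)}) - f(x^*)]$ then forces $T \ge 8 + \tfrac{32\sigma^2}{L\delta} = 8 + \tfrac{128\sigma^2 2^e K}{L[f(x_{(0)}) - f(x^*)]}$, which is exactly the prescribed $T$.

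For the SFO count, within epoch $e$ the algorithm performs $K$ catalyst iterations, each calling \texttt{SGD} for $T = T_e \coloneqq 8 + \tfrac{128\sigma^2 2^e K}{L[f(x_{(0)}) - f(x^*)]}$ steps, hence at most one \texttt{SFO} call per step. The total over all epochs is
\begin{align*}
\tsum_{e=1}^{E} K \cdot T_e
= \tsum_{e=1}^{E} K\rbr{8 + \frac{128\sigma^2 2^e K}{L[f(x_{(0)}) - f(x^*)]}}
= \cO(KE) + \cO\rbr{\frac{\sigma^2 K^2}{L[f(x_{(0)}) - f(x^*)]} \tsum_{e=1}^{E} 2^e}.
\end{align*}
Since $\tsum_{e=1}^E 2^e = \cO(2^E) = \cO\big(\tfrac{f(x_{(0)}) - f(x^*)}{\epsilon}\big)$ and $K = 6\sqrt{L/\mu}$, the first term is $\cO\big(\sqrt{L/\mu}\,\log_2(\tfrac{f(x_{(0)}) - f(x^*)}{\epsilon})\big)$ and the second is $\cO\big(\tfrac{\sigma^2 (L/\mu)}{L\epsilon}\big) = \cO\big(\tfrac{\sigma^2}{\mu\epsilon}\big)$, giving the bound \eqref{r_catalyst_sgd_strongly_convex_complexity}. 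The main obstacle is purely the careful matching of all the constants — confirming that the stated $\Lambda_T$ formula, the stepsize $\eta_t$, the value $t_0 = 8$, and the definition of $\delta$ in Proposition \ref{prop_rcatalyst_general} all line up so that Propositions \ref{prop_rcatalyst_general} and \ref{prop_sgd_err_condition} apply verbatim; the geometric-sum argument for the complexity is then routine.
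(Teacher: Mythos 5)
Your proposal is correct and follows essentially the same route as the paper's proof: invoke Proposition \ref{prop_rcatalyst_general} for the epoch-wise halving, certify conditions \eqref{err_condition_r_catalyst}--\eqref{err_condition_r_catalyst_eps_delta} via Proposition \ref{prop_sgd_err_condition} with $\mu_{\phi_k}=\beta_k$, $L_{\phi_k}=2\beta_k$, $t_0=8$, and then bound $\tsum_e K\,T_e$ by a geometric sum. Your side remark about the constant in $\Lambda_T$ is a fair observation (the telescoping of $\prod_t\tfrac{t+t_0-2}{t+t_0}$ gives $\tfrac{(t_0-1)t_0}{(T+t_0-1)(T+t_0)}$ rather than the paper's $\tfrac{(t_0+1)(t_0+2)}{(T+t_0+1)(T+t_0+2)}$), but as you note both are $\Theta(1/T^2)$ and all subsequent bounds go through unchanged.
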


\begin{proof}
Given the choice of $\cbr{(\gamma_k, \beta_k)}$, 
suppose \eqref{err_condition_r_catalyst} and \eqref{err_condition_r_catalyst_eps_delta} are satisfied, 
we can invoke Proposition \ref{prop_rcatalyst_general} and obtain
$
\EE \sbr{ f(x_{(e)}) - f(x^*)  } \leq \epsilon
$,
given the choice of $E$.
 It remains to determine the number of steps $T$ and stepsizes $\cbr{\eta_t}$ required by \texttt{SGD} at iteration $k$ of epoch $e$ so that \eqref{err_condition_r_catalyst} and \eqref{err_condition_r_catalyst_eps_delta} are satisfied, and consequently determine the value of $\alpha_k$ therein. 
Given the choice of $\beta_k$,  the subproblem \eqref{eq:cp_subproblem} is $\beta_k$ strongly convex and $2 \beta_k$ smooth. 
 Hence in view of  \eqref{sgd_err_condition_alpha_epsilon_delta}, \eqref{sgd_err_condition_param_choice}, \eqref{sgd_err_condition_epsilon_delta_ub} in Proposition \ref{prop_sgd_err_condition}, 
 to satisfy \eqref{err_condition_r_catalyst} 
 it suffices to choose  
$
  \eta_t = \frac{2}{ \beta_k  (t + 8)}$,  
$ T \geq   8$, 
 $
  \alpha_k = \frac{\beta_k }{1 - \Lambda_T}.
$
 In addition, from \eqref{sgd_err_condition_epsilon_delta_ub}, it can be readily seen that to satisfy \eqref{err_condition_r_catalyst_eps_delta} it suffices to take 
 \begin{align*}
  \frac{32 \sigma^2}{ \beta_k T} \leq \frac{2^{-e-2}}{K} \sbr{f(x_{(0)}) - f(x^*) }.
 \end{align*}
Given the choice of $\beta_k$ the above condition holds by taking 
$
T = 8 + \frac{128 \sigma^2 2^{e} K}{L (f(x_{(0)} )- f(x^*))}.
$
Consequently, the total number of calls to SFO is bounded by 
\begin{align*}
\tsum_{e=1}^E K \cdot T & = \tsum_{e=1}^E 8 K  + \frac{128 \sigma^2 2^{e} K^2}{L (f(x_{(0)} )- f(x^*))}  = \cO \left(
\sqrt{\frac{L}{\mu}} \log_{2} \left(\frac{f(x_{(0)}) - f(x^*)}{\epsilon}\right)
+ \frac{\sigma^2}{\mu \epsilon}
\right).
\end{align*}
The proof is then completed.
\end{proof}

As stated in Theorem \ref{thrm_sc_opt}, the concrete parameter choice of \texttt{R-Catalyst}(\texttt{SGD}) requires the knowledge of the initial optimality gap.
When this quantity is unknown,  one can instead use its overestimate. 
In view of \eqref{r_catalyst_sgd_strongly_convex_complexity}, the price for potential overestimation is only a logarithmic factor increase in  the overall complexity of \texttt{R-Catalyst}(\texttt{SGD}).

A few remarks are in order before we conclude our discussions in this section. 
In view of Theorem \ref{thrm_Tonsc_opt} and \ref{thrm_sc_opt}, 
for the deterministic setting  where $\sigma = 0$, the proposed catalyst scheme attains the optimal $\cO(\sqrt{L/\epsilon})$ (resp. $\cO(\sqrt{L/\mu}\log(1/\epsilon))$) iteration complexity for solving \eqref{cp} with a smooth (resp. smooth and strongly-convex) objective.
For the stochastic setting, 
the proposed scheme attains the optimal  sample complexity   $\cO(\sigma^2 /\epsilon^2)$   (resp. $\cO(\sigma^2/(\mu \epsilon))$) for smooth  (resp. smooth and strongly-convex) objectives.
Consequently, the catalyst scheme presented in this section is able to accelerate simple (stochastic) gradient method and simultaneously obtain optimal complexities in both deterministic and stochastic settings \cite{lan2012optimal}.


\section{Extragradient for Minimax Optimization}\label{sec_eg}

In this section we turn our attention back to the minimax problem \eqref{def_problem}. 
For the purpose of our discussion, we  assume within this section that the minimax problem is strongly-convex-strongly-concave. 
That is, for some $\mu_p, \mu_d > 0$, 
\begin{align}
F(x_1, y) - F(x_2, y) - \langle \nabla_x F(x_2, y), x_1-x_2 \rangle \ge \tfrac{\mu_p}{2} \|x_1 - x_2\|^2, \ \forall x_1, x_2 \in X, \label{sc_x_seg}\\ 
F(x, y_1) - F(x, y_2) - \langle \nabla_y F(x, y_2), y_1-y_2 \rangle \le -\tfrac{\mu_d}{2} \|y_1 - y_2\|^2, \ \forall y_1, y_2 \in Y. \label{sc_y_seg}
\end{align}
Let us define $\mu \coloneqq \min \cbr{\mu_p, \mu_d} > 0$.
We next introduce a variant of extragradient method for problem \eqref{def_problem}, which we term as regularized extragradient ({REG}) in the ensuing discussion.
In particular, we will establish in this section that REG obtains optimal complexities in both deterministic and stochastic settings for solving smooth and strongly-monotone variational inequalities. 
More importantly, it turns out that REG and its stochastic variant come with a convergence characterization
that simultaneously controls the gap value and the distance to any reference point chosen a posteriori, which
naturally aligns with the error condition required by the catalyst scheme for problem \eqref{def_problem}.
This property subsequently makes them ideal candidates as the to-be-catalyzed methods for the catalyst scheme, which we detail in  Section \ref{sec_minimax_deterministic} and \ref{sec_minimax_stoch}.
Notably, the aforementioned convergence characterization has not been established or reported before for other methods applied to smooth and strongly-monotone VIs (e.g. \cite{kotsalis2022simple, mokhtari2020unified, beznosikov2021distributed}),  which solely focus on the distance to the solution (saddle point).

\subsection{Deterministic Regularized Extragradient}

The deterministic regularized extragradient ({REG}) method is presented in Algorithm \ref{alg_meg_deterministic}. 
Similar to the SGD variant discussed in Section \ref{subsec_catalyst_sgd},
REG returns the the latest iterate and an ergodic average of historical iterates. 
With a slight overload of notations, for the remainder of the section we will  denote the weights in the construction of the ergodic average as $\cbr{\Lambda_t}$, defined by 
\begin{align}\label{def_lambda_minimax}
\Lambda_t = \begin{cases}
1, &~ t= 0; \\
\Lambda_{t-1} (1 + \mu \eta_{t-1}) , &~ t \geq 1.
\end{cases}
\end{align}
Clearly, REG differs from the original extragradient method \cite{korpelevich1976extragradient} by the  additional strongly-convex term $\norm{z - \hat{z}_t}^2$ in the ``extragradient'' step \eqref{eg_update_2}. 
As will be clarified later, the introduction of this element substantially improves the convergence of the extragradient method to linear rate for  strongly-convex-strongly-concave problems.

\begin{algorithm}[H]
\caption{\texttt{REG}$(F)$: regularized extragradient for $\min_{x \in X} \max_{y \in Y} F(x,y) $}
\begin{algorithmic}\label{alg_meg_deterministic}
\STATE {\bf Input:} stepsizes $\cbr{\eta_t}$, total number of steps $T > 0$, initial point $z_0 \in Z$.
\FOR{t = 0, 1, \ldots, T-1}
\STATE Define $G(z) = [\nabla_{x} F(z); -\nabla_{y} F(z)]$ for any $z \in Z$, and consequently perform update
\begin{align}
\hat{z}_t &= \argmin_{z \in Z} \eta_t \inner{G(z_t)}{z} + \frac{1}{2} \norm{z - z_t}^2; \label{eg_update_1} \\
z_{t+1} & = \argmin_{z \in Z}\eta_t \sbr{ \inner{G(\hat{z}_t)}{z} + \frac{\mu}{2} \norm{z - \hat{z}_t}^2} + \frac{1}{2} \norm{z - z_t}^2. \label{eg_update_2}
\end{align}
\ENDFOR
\STATE Construct $\overline{z}_T =  {\tsum_{t=0}^{T-1} \eta_t \Lambda_t \hat{z}_t }/({\tsum_{t=0}^{T-1} \eta_t \Lambda_t})$, with $\cbr{\Lambda_t}$ defined in \eqref{def_lambda_minimax}.
\STATE {\bf Output:}  $(\overline{z}_T, z_T)$.
\end{algorithmic}
\end{algorithm}

We proceed to establish the generic convergence properties of the {REG} method. 

\begin{lemma}\label{lemma_eg_deterministic}
Take stepsize $\eta_t \leq 1/L$.
Then for any $T \geq 1$, we have
\begin{align}\label{ineq_eg_deterministic_general}
F(\overline{x}_T, y) - F(x, \overline{y}_T) 
+ \frac{\mu \Lambda_T}{2 (\Lambda_T - \Lambda_0)} \norm{z - z_T}^2 
\leq \frac{\mu \Lambda_0}{2 (\Lambda_T - \Lambda_0)} \norm{z - z_0}^2.
\end{align}
\end{lemma}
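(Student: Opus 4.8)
The plan is to analyze the two-step extragradient update via the standard "three-point" prox-inequality and then telescope with the weights $\Lambda_t$. First I would fix an arbitrary $z \in Z$ and, for each $t$, apply the optimality conditions of the prox-subproblems \eqref{eg_update_1} and \eqref{eg_update_2}. From \eqref{eg_update_2}, using the three-point identity for the prox-map with the quadratic $\tfrac{1}{2}\|\cdot - z_t\|^2$, I get, for all $z$,
\begin{align*}
\eta_t \langle G(\hat z_t), z_{t+1} - z \rangle + \tfrac{\eta_t \mu}{2}\|z_{t+1} - \hat z_t\|^2
\le \tfrac{1}{2}\|z - z_t\|^2 - \tfrac{1}{2}\|z - z_{t+1}\|^2 - \tfrac{1}{2}\|z_{t+1} - z_t\|^2 - \tfrac{\eta_t\mu}{2}\|z - \hat z_t\|^2 + \tfrac{\eta_t\mu}{2}\|z - z_{t+1}\|^2,
\end{align*}
where I should be careful to track the $\tfrac{\mu}{2}\|z-\hat z_t\|^2$ term correctly (it contributes the $(1+\mu\eta_t)$ factor that defines $\Lambda_t$). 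Similarly, \eqref{eg_update_1} with test point $z_{t+1}$ gives a bound on $\eta_t\langle G(z_t), \hat z_t - z_{t+1}\rangle$.

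Next I would add these two inequalities and use $L$-Lipschitzness of $G$ together with $\eta_t \le 1/L$ to cancel the cross term $\eta_t\langle G(z_t) - G(\hat z_t), \hat z_t - z_{t+1}\rangle$ against the negative squared terms $-\tfrac12\|z_{t+1}-z_t\|^2 - \tfrac12\|\hat z_t - z_t\|^2$ (the usual extragradient estimate $\eta_t\|G(z_t)-G(\hat z_t)\|\cdot\|\hat z_t - z_{t+1}\| \le \tfrac{1}{2}\|z_t-\hat z_t\|^2 + \tfrac{1}{2}\|\hat z_t - z_{t+1}\|^2$). This leaves a clean bound of the form
\begin{align*}
\eta_t \langle G(\hat z_t), \hat z_t - z \rangle \le \tfrac{1}{2}\|z - z_t\|^2 - \tfrac{1+\mu\eta_t}{2}\|z - z_{t+1}\|^2.
\end{align*}
Then I convert the left side to a function-value gap: by convexity-concavity (strong convexity in $x$, strong concavity in $y$, \eqref{sc_x_seg}--\eqref{sc_y_seg}), writing $\hat z_t = (\hat x_t, \hat y_t)$,
\begin{align*}
\langle G(\hat z_t), \hat z_t - z \rangle \ge F(\hat x_t, y) - F(x, \hat y_t) + \tfrac{\mu}{2}\|\hat z_t - z\|^2 \ge F(\hat x_t, y) - F(x, \hat y_t),
\end{align*}
discarding the nonnegative term (one could keep it, but it is not needed). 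Multiplying the resulting inequality by $\Lambda_t$ and summing over $t = 0, \dots, T-1$ telescopes the right-hand side: since $\Lambda_t(1+\mu\eta_t) = \Lambda_{t+1}$, the sum collapses to $\tfrac{\Lambda_0}{2}\|z - z_0\|^2 - \tfrac{\Lambda_T}{2}\|z - z_T\|^2$. Dividing by $\sum_{t=0}^{T-1}\eta_t\Lambda_t = \Lambda_T - \Lambda_0$ (which follows from $\Lambda_{t+1} - \Lambda_t = \mu\eta_t\Lambda_t$ telescoped, up to the factor $\mu$ — I would double-check this normalization) and applying Jensen's inequality to $F(\cdot, y)$ convex and $F(x, \cdot)$ concave with the weights $\eta_t\Lambda_t/(\Lambda_T-\Lambda_0)$ to pass from $\sum_t \tfrac{\eta_t\Lambda_t}{\Lambda_T-\Lambda_0}[F(\hat x_t,y) - F(x,\hat y_t)]$ down to $F(\overline x_T, y) - F(x, \overline y_T)$ yields exactly \eqref{ineq_eg_deterministic_general}.

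The main obstacle I anticipate is bookkeeping the regularization term $\tfrac{\mu}{2}\|z - \hat z_t\|^2$ versus $\tfrac{\mu}{2}\|z - z_{t+1}\|^2$ in the prox-inequality for \eqref{eg_update_2}: the update penalizes distance to $\hat z_t$, but to get the telescoping factor $(1+\mu\eta_t)$ on $\|z - z_{t+1}\|^2$ one needs to argue that replacing $\hat z_t$ by $z_{t+1}$ in that quadratic term is affordable, which should follow from the strong convexity of the subproblem objective in \eqref{eg_update_2} (it is $(1+\mu\eta_t)$-strongly convex, so its optimality condition directly gives the $(1+\mu\eta_t)\|z-z_{t+1}\|^2$ term). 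A secondary point to get right is the normalization constant $\Lambda_T - \Lambda_0$ appearing in the denominator and how $\mu$ factors into it; I expect $\sum_{t=0}^{T-1}\eta_t\Lambda_t$ to relate to $(\Lambda_T - \Lambda_0)/\mu$, and the $\mu$'s to cancel against the $\tfrac{\mu}{2}$ coefficients, leaving the stated bound. Everything else is the routine extragradient calculation.
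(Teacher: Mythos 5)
Your overall route is the same as the paper's (three-point inequalities for the two prox steps, test point $z_{t+1}$ in the first, Lipschitz cancellation of the cross term, conversion to a gap via strong convexity--concavity, $\Lambda_t$-weighted telescoping, Jensen). But there is one genuine error in the bookkeeping of exactly the term you flagged as the main obstacle. The three-point inequality for \eqref{eg_update_2} evaluates the subproblem objective at the comparison point $z$, so the regularizer contributes $+\tfrac{\mu\eta_t}{2}\|z-\hat z_t\|^2$ \emph{on the right-hand side} (your first display has its sign wrong); the strong convexity of the subproblem only upgrades the left-hand coefficient to $\tfrac{1+\mu\eta_t}{2}\|z-z_{t+1}\|^2$, it does not remove that right-hand term. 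Consequently your claimed intermediate bound
\begin{align*}
\eta_t \langle G(\hat z_t), \hat z_t - z \rangle \le \tfrac{1}{2}\|z - z_t\|^2 - \tfrac{1+\mu\eta_t}{2}\|z - z_{t+1}\|^2
\end{align*}
is false as derived: an uncancelled $+\tfrac{\mu\eta_t}{2}\|z-\hat z_t\|^2$ survives and destroys the telescoping. The fix is precisely the term you propose to discard: keeping the strong-monotonicity quadratic in
$\langle G(\hat z_t), \hat z_t - z\rangle \ge F(\hat x_t,y) - F(x,\hat y_t) + \tfrac{\mu}{2}\|\hat z_t - z\|^2$
and multiplying by $\eta_t$ cancels the regularizer's $\tfrac{\mu\eta_t}{2}\|z-\hat z_t\|^2$ exactly; this is how the paper proceeds, and it is not optional. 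With that correction (and the normalization $\sum_{t=0}^{T-1}\eta_t\Lambda_t=(\Lambda_T-\Lambda_0)/\mu$, which you already anticipate), the rest of your argument goes through.
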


\begin{proof}
First, the optimality condition of \eqref{eg_update_1} yields 
\begin{align}\label{eg_update_1_opt_condition}
\eta_t \inner{G(z_t)}{\hat{z}_t - z}
+ \frac{1}{2} \norm{\hat{z}_t - z_t}^2 + \frac{1}{2} \norm{\hat{z}_t - z}^2 
\leq \frac{1}{2} \norm{z - z_t}^2.
\end{align}
Similarly, the optimality condition of \eqref{eg_update_2} yields
\begin{align}\label{eg_update_2_opt_condition}
& \eta_t \inner{G(\hat{z}_t)}{z_{t+1} - z} + \frac{\mu \eta_t + 1}{2} \norm{z - z_{t+1}}^2 
+ \frac{\mu \eta_t}{2} \norm{z_{t+1} - \hat{z}_t}^2 + \frac{1}{2} \norm{z_{t+1} - z_t}^2  \nonumber \\
\leq & \frac{\mu \eta_t}{2} \norm{z - \hat{z}_t}^2 + \frac{1}{2} \norm{z - z_t}^2. 
\end{align}
By taking $z = z_{t+1}$ in \eqref{eg_update_1_opt_condition}, and combining with \eqref{eg_update_2_opt_condition}, we obtain 
\begin{align}
& \eta_t \inner{G(z_t)}{\hat{z}_t - z_{t+1}}
+ \frac{1}{2} \norm{\hat{z}_t - z_t}^2 + \frac{1}{2} \norm{\hat{z}_t - z_{t+1}}^2
+ \eta_t \inner{G(\hat{z}_t)}{z_{t+1} - z} + \frac{\mu \eta_t + 1}{2} \norm{z - z_{t+1}}^2 
 \nonumber  \\
\leq &  \frac{\mu \eta_t}{2} \norm{z - \hat{z}_t}^2 + \frac{1}{2} \norm{z - z_t}^2.  \label{eg_recursion_raw}
\end{align}
In addition, one also has 
\begin{align}
& \eta_t \sbr{ \inner{G(z_t)}{\hat{z}_t - z_{t+1}} +  \inner{G(\hat{z}_t)}{z_{t+1} - z}  } + \frac{1}{2} \norm{\hat{z}_t - z_t}^2 + \frac{1}{2} \norm{\hat{z}_t - z_{t+1}}^2  \nonumber \\
= & \eta_t  \sbr{ \inner{G(\hat{z}_t)}{\hat{z}_t - z} + \inner{G(z_t) - G(\hat{z}_t)}{\hat{z}_t - z_{t+1}} }  + \frac{1}{2} \norm{\hat{z}_t - z_t}^2 + \frac{1}{2} \norm{\hat{z}_t - z_{t+1}}^2  \nonumber \\
\overset{(a)}{\geq} & \eta_t \sbr{ F(\hat{x}_t, y) - F(x, \hat{y}_t) + \frac{\mu}{2} \norm{\hat{z}_t - z}^2 
- L\norm{z_t - \hat{z}_t} \norm{\hat{z}_t - z_{t+1}} } + \frac{1}{2} \norm{\hat{z}_t - z_t}^2 + \frac{1}{2} \norm{\hat{z}_t - z_{t+1}}^2 \nonumber \\
\overset{(b)}{\geq} & \eta_t  \sbr{ F(\hat{x}_t, y) - F(x, \hat{y}_t) + \frac{\mu}{2} \norm{\hat{z}_t - z}^2 }, \label{eg_extra_step_approx}
\end{align}
where $(a)$ follows from $G$ being $L$-Lipschitz, and $F$ being
 $\mu$-strongly-convex w.r.t. $x$ and strongly-concave w.r.t. $y$;  $(b)$ follows from H\"{o}lder's inequality together with $L\eta_t \leq 1$.
By combining \eqref{eg_recursion_raw} and \eqref{eg_extra_step_approx}, we obtain 
\begin{align*}
\eta_t \sbr{ F(\hat{x}_t, y) - F(x, \hat{y}_t) }
+ \frac{\mu \eta_t + 1}{2} \norm{z - z_{t+1}}^2 
\leq   \frac{1}{2} \norm{z - z_t}^2.
\end{align*}
Multiplying both sides by $\Lambda_t$, and making using of its definition, we obtain 
\begin{align*}
\eta_t \Lambda_t  \sbr{ F(\hat{x}_t, y) - F(x, \hat{y}_t) }
+ \frac{\Lambda_{t+1}}{2 } \norm{z - z_{t+1}}^2 \leq \frac{\Lambda_t}{2 } \norm{z - z_t}^2.
\end{align*}
Taking the telescopic sum of the above inequality from $t=0$ to $T-1$ yields 
\begin{align*}
\tsum_{t=0}^{T-1} \eta_t \Lambda_t \sbr{ F(\hat{x}_t, y) - F(x, \hat{y}_t) }
+ \frac{\Lambda_T}{2} \norm{z - z_T}^2 
\leq \frac{\Lambda_0}{2} \norm{z - z_0}^2.
\end{align*}
Further dividing both sides of the above relation by $\tsum_{t=0}^{T-1} \eta_t \Lambda_t$, while noting that $ \tsum_{t=0}^{T-1} \eta_t \Lambda_t = \rbr{\Lambda_T - \Lambda_0}/{\mu}$  from \eqref{def_lambda_minimax} 
and making use of the definition of $\overline{z}_T$, we obtain the desired claim.
\end{proof}

We are now ready to establish the linear convergence of REG with concrete parameter specifications.

\begin{theorem}
Suppose $\mu > 0$. Let $z^*$ be the unique saddle point of \eqref{def_problem}.
Choose $\eta_t = 1/L$ in the REG method, then 
\begin{align*}
\norm{z_T - z^*}^2 \leq \rbr{1 + \frac{\mu}{L}}^{-T} \norm{z_0 - z^*}^2.
\end{align*}
\end{theorem}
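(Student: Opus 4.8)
The plan is to apply Lemma~\ref{lemma_eg_deterministic} with the specific choice $z = z^*$ and $\eta_t = 1/L$, and then separately argue that the iterate sequence $\{z_t\}$ stays close to $z^*$. First I would observe that, with $\eta_t = 1/L$ constant, the weights in \eqref{def_lambda_minimax} become $\Lambda_t = (1 + \mu/L)^t$, so that $\Lambda_T/(\Lambda_T - \Lambda_0) \geq 1$ and, more importantly, the per-step recursion established inside the proof of Lemma~\ref{lemma_eg_deterministic},
\begin{align*}
\eta_t \sbr{ F(\hat{x}_t, y) - F(x, \hat{y}_t) } + \frac{\mu \eta_t + 1}{2} \norm{z - z_{t+1}}^2 \leq \frac{1}{2} \norm{z - z_t}^2,
\end{align*}
holds for every $z = (x,y) \in Z$. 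The key point is to plug in $z = z^* = (x^*, y^*)$: by definition of the saddle point, $F(\hat{x}_t, y^*) - F(x^*, \hat{y}_t) \geq 0$ (indeed $F(x^*, y) \leq F(x^*, y^*) \leq F(x, y^*)$ for all feasible $x, y$, so $F(\hat x_t, y^*) - F(x^*, \hat y_t) \geq F(x^*, y^*) - F(x^*, y^*) = 0$). Dropping this nonnegative gap term from the left side yields
\begin{align*}
\frac{1 + \mu/L}{2} \norm{z^* - z_{t+1}}^2 \leq \frac{1}{2} \norm{z^* - z_t}^2,
\end{align*}
i.e. $\norm{z_{t+1} - z^*}^2 \leq (1 + \mu/L)^{-1} \norm{z_t - z^*}^2$.

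From here the result follows by a trivial induction on $t$ from $0$ to $T$, multiplying the contraction factors. So the proof is essentially: (i) instantiate the one-step inequality from the proof of Lemma~\ref{lemma_eg_deterministic}, which already does all the heavy lifting (combining the two optimality conditions, using $L$-Lipschitzness of $G$ and strong monotonicity, and the H\"older estimate that needs $L\eta_t \le 1$); (ii) specialize $z$ to the saddle point and discard the nonnegative gap; (iii) iterate. No genuinely new estimate is required.

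The only mild subtlety — and the step I would be most careful about — is that Lemma~\ref{lemma_eg_deterministic} as stated gives an \emph{ergodic}/aggregated bound rather than the clean per-step contraction, so I should make explicit that I am re-using the intermediate inequality from its proof (the one obtained right after combining \eqref{eg_recursion_raw} and \eqref{eg_extra_step_approx}) rather than the final displayed statement. Equivalently, one can take $T=1$ in \eqref{ineq_eg_deterministic_general} with $z=z^*$ and the endpoint $\overline z_1 = \hat z_0$, note the gap term $F(\hat x_0, y^*) - F(x^*, \hat y_0)$ is nonnegative, and read off the single-step contraction, then restart from $z_1$; this makes the argument entirely self-contained from the stated lemma. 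Either route, one must confirm that $\eta_t = 1/L$ indeed satisfies the lemma's hypothesis $\eta_t \le 1/L$ (it does, with equality, which is exactly why the H\"older step in \eqref{eg_extra_step_approx} still closes). I do not anticipate any real obstacle; the theorem is a direct corollary of the machinery already developed.
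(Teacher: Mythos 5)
Your argument is correct and is essentially the paper's (the paper's proof is literally ``immediate consequence of Lemma~\ref{lemma_eg_deterministic}''): set $z=z^*$, use nonnegativity of the saddle-point gap, and read off the ratio $\Lambda_0/\Lambda_T=(1+\mu/L)^{-T}$. Your detour through the per-step recursion is unnecessary --- applying the final displayed inequality \eqref{ineq_eg_deterministic_general} directly at general $T$ with $z=z^*$ already gives the claim, since the coefficients $\frac{\mu\Lambda_T}{2(\Lambda_T-\Lambda_0)}$ and $\frac{\mu\Lambda_0}{2(\Lambda_T-\Lambda_0)}$ yield exactly the stated contraction --- but it is equally valid.
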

\begin{proof}
The desired claim is an immediate consequence of Lemma \ref{lemma_eg_deterministic}.
\end{proof}

It is worth mentioning that the above convergence of REG generalizes trivially to solving variational inequality 
\begin{align*}
\inner{G(z)}{z - z^*} \geq 0, ~ \forall z \in Z,
\end{align*}
when operator $G$ is smooth and strongly-monotone. 
Notably, the obtained iteration complexity by REG is optimal for this problem class.

\subsection{Stochastic Regularized Extragradient}

The stochastic variant of the REG method, which we term SREG (Algorithm \ref{alg_smeg}), replaces the deterministic gradient operator by its stochastic approximation within its update. 

\begin{algorithm}[H]
\caption{\texttt{SREG}$(F)$: stochastic extragradient for $\min_{x \in X} \max_{y \in Y} F(x,y) $}
\begin{algorithmic}\label{alg_smeg}
\STATE {\bf Input:} stepsizes $\cbr{\eta_t}$, total number of steps $T > 0$, initial point $z_0 \in Z$.
\FOR{t = 0, 1, \ldots, T-1}
\STATE Define ${G}(z, \xi) = [\nabla_{x} F(z; \xi); -\nabla_{y} F(z, \xi)]$. Sample $\xi_t, \hat{\xi}_t$, and update
\begin{align}
\hat{z}_t &= \argmin_{z \in Z} \eta_t \inner{G(z_t, \xi_t)}{z} + \frac{1}{2} \norm{z - z_t}^2; \label{eg_update_1_stoch_general} \\
z_{t+1} & = \argmin_{z \in Z}\eta_t \sbr{ \inner{G(\hat{z}_t, \hat{\xi}_t)}{z} + \frac{\mu}{2} \norm{z - \hat{z}_t}^2 } + \frac{1}{2} \norm{z - z_t}^2. \label{eg_update_2_stoch_general}
\end{align}
\ENDFOR
\STATE Construct $\overline{z}_T =  {\tsum_{t=0}^{T-1} \eta_t \Lambda_t \hat{z}_t }/({\tsum_{t=0}^{T-1} \eta_t \Lambda_t})$, with $\cbr{\Lambda_t}$ defined as in \eqref{def_lambda_minimax}.
\STATE {\bf Output:}  $(\overline{z}_T, z_T)$.
\end{algorithmic}\label{procedure_sgd_stoch}
\end{algorithm}

For notational simplicity going forward let us denote 
$\zeta_t = G(z_t, \xi_t) - G(z_t)$ and $ \hat{\zeta}_t = G(\hat{z}_t, \hat{\xi}_t) - G(\hat{z}_t)$.
The next lemma provides a basic characterization on each step of the SREG method.

\begin{lemma}\label{lemma_seg_one_step_recursion_general}
Suppose 
\begin{align*}
L \eta_t \leq 1/2, ~ t \geq 0.
\end{align*}
Then for any $t \geq 0$,  we have
\begin{align}
  \eta_t  \sbr{ F(\hat{x}_t, y) - F(x, \hat{y}_t)}   + \frac{\mu \eta_t + 1}{2} \norm{z - z_{t+1}}^2   + \eta_t \inner{\hat{\zeta}_t}{\hat{z}_t - z}   \leq \frac{1}{2} \norm{z - z_t}^2   + 4   \eta_t^2 \rbr{ \norm{\zeta_t}^2 + \norm{\hat{\zeta}_t}^2}.
\label{seg_one_step_recursion_general}
\end{align}
\end{lemma}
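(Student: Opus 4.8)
The plan is to mimic the deterministic proof of Lemma~\ref{lemma_eg_deterministic}, but carefully track the error terms arising from using the stochastic operators $G(z_t,\xi_t)$ and $G(\hat z_t,\hat\xi_t)$ in place of $G(z_t)$ and $G(\hat z_t)$. First I would write down the optimality conditions for the two prox steps \eqref{eg_update_1_stoch_general} and \eqref{eg_update_2_stoch_general}. The first gives, for any $z\in Z$,
\begin{align*}
\eta_t\inner{G(z_t,\xi_t)}{\hat z_t - z} + \tfrac12\norm{\hat z_t - z_t}^2 + \tfrac12\norm{\hat z_t - z}^2 \le \tfrac12\norm{z - z_t}^2,
\end{align*}
and the second gives
\begin{align*}
\eta_t\inner{G(\hat z_t,\hat\xi_t)}{z_{t+1}-z} + \tfrac{\mu\eta_t+1}{2}\norm{z-z_{t+1}}^2 + \tfrac{\mu\eta_t}{2}\norm{z_{t+1}-\hat z_t}^2 + \tfrac12\norm{z_{t+1}-z_t}^2 \le \tfrac{\mu\eta_t}{2}\norm{z-\hat z_t}^2 + \tfrac12\norm{z-z_t}^2.
\end{align*}
Setting $z=z_{t+1}$ in the first and adding to the second (with general $z$) cancels the $\norm{z-z_t}^2$ term appearing once on each side of the sum in the same way as \eqref{eg_recursion_raw}, leaving an inner-product expression $\eta_t[\inner{G(z_t,\xi_t)}{\hat z_t - z_{t+1}} + \inner{G(\hat z_t,\hat\xi_t)}{z_{t+1}-z}]$ plus the quadratic remainders.

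Next I would split each stochastic operator into its deterministic part plus noise: $G(z_t,\xi_t)=G(z_t)+\zeta_t$ and $G(\hat z_t,\hat\xi_t)=G(\hat z_t)+\hat\zeta_t$. The deterministic portion is handled exactly as in \eqref{eg_extra_step_approx}, rewriting $\inner{G(z_t)}{\hat z_t-z_{t+1}}+\inner{G(\hat z_t)}{z_{t+1}-z}$ as $\inner{G(\hat z_t)}{\hat z_t - z} + \inner{G(z_t)-G(\hat z_t)}{\hat z_t - z_{t+1}}$, then bounding the coupling term by $L\norm{z_t-\hat z_t}\norm{\hat z_t-z_{t+1}}$ and using $\inner{G(\hat z_t)}{\hat z_t-z}\ge F(\hat x_t,y)-F(x,\hat y_t)+\tfrac\mu2\norm{\hat z_t-z}^2$ from strong convexity-concavity. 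The noise portion contributes $\eta_t\inner{\zeta_t}{\hat z_t-z_{t+1}} + \eta_t\inner{\hat\zeta_t}{z_{t+1}-z}$; I would keep the term $\eta_t\inner{\hat\zeta_t}{\hat z_t - z}$ explicitly on the left-hand side (as in the statement) by writing $\eta_t\inner{\hat\zeta_t}{z_{t+1}-z} = \eta_t\inner{\hat\zeta_t}{\hat z_t-z} + \eta_t\inner{\hat\zeta_t}{z_{t+1}-\hat z_t}$, and then absorb the two remaining ``awkward'' terms $\eta_t\inner{\zeta_t}{\hat z_t-z_{t+1}}$ and $\eta_t\inner{\hat\zeta_t}{z_{t+1}-\hat z_t}$, together with the Lipschitz coupling term, into the available negative quadratics $-\tfrac12\norm{\hat z_t-z_t}^2 - \tfrac12\norm{\hat z_t-z_{t+1}}^2 - \tfrac{\mu\eta_t}{2}\norm{z_{t+1}-\hat z_t}^2 - \tfrac12\norm{z_{t+1}-z_t}^2$ via Young's inequality. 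Here is where the condition $L\eta_t\le 1/2$ (rather than $\le 1$) gets used: I would bound, e.g., $L\norm{z_t-\hat z_t}\norm{\hat z_t-z_{t+1}}\le \tfrac{L^2\eta_t}{?}\dots$ — actually more simply bound $\eta_t L\norm{z_t-\hat z_t}\norm{\hat z_t-z_{t+1}}\le \tfrac{(L\eta_t)^2}{2}\cdot 2\norm{z_t-\hat z_t}^2/2 + \dots$, and for the noise terms use $\eta_t\inner{\zeta_t}{\hat z_t-z_{t+1}}\le \tfrac14\norm{\hat z_t-z_{t+1}}^2 + \eta_t^2\norm{\zeta_t}^2$ and similarly $\eta_t\inner{\hat\zeta_t}{z_{t+1}-\hat z_t}\le \tfrac{\mu\eta_t}{2}\norm{z_{t+1}-\hat z_t}^2 + \tfrac{\eta_t}{2\mu}\norm{\hat\zeta_t}^2$ — but to land exactly on the clean bound $4\eta_t^2(\norm{\zeta_t}^2+\norm{\hat\zeta_t}^2)$ stated in \eqref{seg_one_step_recursion_general} I would instead be less delicate: bound all cross terms crudely using the quadratics $\tfrac12\norm{\hat z_t-z_t}^2$, $\tfrac12\norm{\hat z_t-z_{t+1}}^2$, $\tfrac12\norm{z_{t+1}-z_t}^2$ with Young coefficients chosen so that the leftover noise coefficient is at most $4\eta_t^2$ and the leftover Lipschitz coefficient is nonpositive when $L\eta_t\le1/2$.

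The main obstacle will be this bookkeeping: making sure that after introducing Young's inequality constants for (i) the Lipschitz coupling term, (ii) the $\inner{\zeta_t}{\hat z_t-z_{t+1}}$ term, and (iii) the $\inner{\hat\zeta_t}{z_{t+1}-\hat z_t}$ term, the sum of the quadratic coefficients they generate does not exceed the budget $\tfrac12+\tfrac12+\tfrac12+\tfrac{\mu\eta_t}{2}$ available from the four negative squared-norm terms, while simultaneously the residual noise coefficient collapses to the stated $4\eta_t^2$. The key structural facts that make this work are that $\norm{z_t-\hat z_t}$, $\norm{\hat z_t-z_{t+1}}$, and $\norm{z_{t+1}-z_t}$ all appear with negative coefficients, that $z_t-z_{t+1} = (z_t-\hat z_t)+(\hat z_t - z_{t+1})$ so one can freely trade between them via $\norm{z_t-z_{t+1}}^2\le 2\norm{z_t-\hat z_t}^2+2\norm{\hat z_t-z_{t+1}}^2$, and that $L\eta_t\le1/2$ leaves enough slack. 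Once \eqref{seg_one_step_recursion_general} is established for each $t$, the remainder of the SREG analysis (telescoping, handling $\inner{\hat\zeta_t}{\hat z_t-z}$ in expectation, etc.) will presumably be carried out in subsequent lemmas, so I would stop here.
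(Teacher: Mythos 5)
Your proposal is correct and follows essentially the same route as the paper's proof: same optimality conditions, the same substitution $z=z_{t+1}$ in the first prox step, the same split of the stochastic operators into deterministic plus noise with $\inner{\hat\zeta_t}{z_{t+1}-z}=\inner{\hat\zeta_t}{\hat z_t-z}+\inner{\hat\zeta_t}{z_{t+1}-\hat z_t}$, and the same absorption of the two noise cross terms and the Lipschitz coupling into the negative quadratics via Young's inequality under $L\eta_t\le 1/2$. The bookkeeping you flag does close: each noise term costs $\tfrac18\norm{\hat z_t-z_{t+1}}^2+2\eta_t^2\norm{\cdot}^2$ and the Lipschitz term costs $\tfrac14\norm{\hat z_t-z_t}^2+\tfrac14\norm{\hat z_t-z_{t+1}}^2$, which fits the available budget and yields $2\eta_t^2(\norm{\zeta_t}^2+\norm{\hat\zeta_t}^2)\le 4\eta_t^2(\norm{\zeta_t}^2+\norm{\hat\zeta_t}^2)$, exactly as in the paper.
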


\begin{proof}
First, the optimality condition of \eqref{eg_update_1_stoch_general} yields 
\begin{align}\label{eg_update_1_opt_condition_stoch_general}
\eta_t \inner{G(z_t, \xi_t)}{\hat{z}_t - z}
+ \frac{1}{2} \norm{\hat{z}_t - z_t}^2 + \frac{1}{2} \norm{\hat{z}_t - z}^2 
\leq \frac{1}{2} \norm{z - z_t}^2.
\end{align}
Similarly, the optimality condition of \eqref{eg_update_2_stoch_general} yields
\begin{align}\label{eg_update_2_opt_condition_stoch_general}
& \eta_t \inner{G(\hat{z}_t, \hat{\xi}_t)}{z_{t+1} - z} + \frac{\mu \eta_t + 1}{2} \norm{z - z_{t+1}}^2  
+ \frac{1}{2} \norm{z_{t+1} - z_t}^2  \leq 
 \frac{\mu\eta_t}{2} \norm{z - \hat{z}_t}^2  + \frac{1}{2} \norm{z - z_t}^2. 
\end{align}
By taking $z = z_{t+1}$ in \eqref{eg_update_1_opt_condition_stoch_general}, and combining with \eqref{eg_update_2_opt_condition_stoch_general}, we obtain 
\begin{align}
& \eta_t \inner{G(z_t)}{\hat{z}_t - z_{t+1}}
+ \frac{1}{2} \norm{\hat{z}_t - z_t}^2 + \frac{1}{2} \norm{\hat{z}_t - z_{t+1}}^2
+ \eta_t \inner{G(\hat{z}_t)}{z_{t+1} - z} 
 + \frac{\mu \eta_t + 1}{2} \norm{z - z_{t+1}}^2  \nonumber \\
& ~~~  + \eta_t \inner{G(z_t, \xi_t) - G(z_t) }{\hat{z}_t - z_{t+1}} 
 +  \eta_t \inner{G(\hat{z}_t, \hat{\xi}_t) - G(\hat{z}_t)}{z_{t+1} - \hat{z}_t}
 + \eta_t \inner{G(\hat{z}_t, \hat{\xi}_t) - G(\hat{z}_t)}{\hat{z}_t - z}
  \nonumber  \\
\leq &  \frac{\mu \eta_t}{2} \norm{z - \hat{z}_t}^2  + \frac{1}{2} \norm{z - z_t}^2.  \nonumber
\end{align}
Then further applying Young's inequality to the above relation yields 
\begin{align}
& \eta_t \inner{G(z_t)}{\hat{z}_t - z_{t+1}}
+ \frac{1}{2} \norm{\hat{z}_t - z_t}^2 + \frac{1}{4} \norm{\hat{z}_t - z_{t+1}}^2
+ \eta_t \inner{G(\hat{z}_t)}{z_{t+1} - z} \nonumber  \\
& ~~~  + \frac{\mu \eta_t + 1}{2} \norm{z - z_{t+1}}^2   
 + \eta_t \inner{\hat{\zeta}_t}{\hat{z}_t - z}
  \nonumber  \\
\leq &  \frac{\mu \eta_t}{2} \norm{z - \hat{z}_t}^2  + \frac{1}{2} \norm{z - z_t}^2 
+ 2 \eta_t^2 \rbr{ \norm{\zeta_t}^2 + \norm{\hat{\zeta}_t}^2}.
 \label{eg_recursion_raw_stoch_general}
\end{align}
Following similar lines as in the proof of Lemma \ref{lemma_eg_deterministic}, one also has 
\begin{align}
& \eta_t \sbr{ \inner{G(z_t)}{\hat{z}_t - z_{t+1}} +  \inner{G(\hat{z}_t)}{z_{t+1} - z}  } + \frac{1}{2} \norm{\hat{z}_t - z_t}^2 + \frac{1}{4} \norm{\hat{z}_t - z_{t+1}}^2  \nonumber \\
= & \eta_t  \sbr{ \inner{G(\hat{z}_t)}{\hat{z}_t - z} + \inner{G(z_t) - G(\hat{z}_t)}{\hat{z}_t - z_{t+1}} }  + \frac{1}{2} \norm{\hat{z}_t - z_t}^2 + \frac{1}{4} \norm{\hat{z}_t - z_{t+1}}^2  \nonumber \\
\overset{(a)}{\geq} & \eta_t \sbr{ F(\hat{x}_t, y) - F(x, \hat{y}_t) + \frac{\mu}{2} \norm{\hat{z}_t - z}^2  
- L \norm{z_t - \hat{z}_t} \norm{\hat{z}_t - z_{t+1}} } + \frac{1}{2} \norm{\hat{z}_t - z_t}^2 + \frac{1}{4} \norm{\hat{z}_t - z_{t+1}}^2 \nonumber \\
\overset{(b)}{\geq} & \eta_t  \sbr{ F(\hat{x}_t, y) - F(x, \hat{y}_t) + \frac{\mu}{2} \norm{\hat{z}_t - z}^2  }, \label{eg_extra_step_approx_stoch_general}
\end{align}
where $(a)$ follows from $G$ being $L$-Lipschitz, and $F$ is  strongly-convex-strongly-concave with modulus $\mu$;  $(b)$ follows from H\"{o}lder's inequality together with $L\eta_t \leq {1}/{2}$.
The desired claim follows by combining \eqref{eg_recursion_raw_stoch_general} and \eqref{eg_extra_step_approx_stoch_general}.
\end{proof}

With Lemma \ref{lemma_seg_one_step_recursion_general} in place, we are ready to establish the generic convergence properties of SREG. 

\begin{lemma}\label{lemma_seg_general_convergence}
Let $\cbr{\Lambda_t}$ be defined as in \eqref{def_lambda_minimax}.
Choose in the SREG method that 
\begin{align}\label{seg_stepsize_condition}
L \eta_t \leq 1/2, ~ t \geq 0.
\end{align}
Then for any $T \geq 1$, we have 
\begin{align*}
& \EE \sbr{ F(\overline{x}_T, y ) - F(x, \overline{y}_T)
+ \frac{\mu \Lambda_T}{2(\Lambda_T - \Lambda_0)} \norm{z - z_T}^2  } \\
\leq & 
\frac{\mu \Lambda_0 }{2(\Lambda_T - \Lambda_0)} \norm{z - z_0}^2 
+ \frac{8 \mu \sigma^2}{\Lambda_T - \Lambda_0}  \tsum_{t=0}^{T-1}  \eta_t^2 \Lambda_t .
\end{align*}
\end{lemma}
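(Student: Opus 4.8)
The plan is to weight the one-step bound of Lemma~\ref{lemma_seg_one_step_recursion_general} by $\cbr{\Lambda_t}$, telescope, take expectation to eliminate the stochastic inner-product term, control the residual noise by the oracle's variance bound, and then pass from the ergodic sum of gap values to the gap at the average iterate $\overline z_T$ via convexity--concavity of $F$. First I would multiply \eqref{seg_one_step_recursion_general} by $\Lambda_t \ge 0$; since $\Lambda_{t+1}=\Lambda_t(1+\mu\eta_t)$ by \eqref{def_lambda_minimax}, the term $\Lambda_t\cdot\tfrac{\mu\eta_t+1}{2}\norm{z-z_{t+1}}^2$ becomes $\tfrac{\Lambda_{t+1}}{2}\norm{z-z_{t+1}}^2$, so summing over $t=0,\dots,T-1$ telescopes the distance terms and gives
\begin{align*}
\tsum_{t=0}^{T-1}\eta_t\Lambda_t\sbr{F(\hat x_t,y)-F(x,\hat y_t)} + \tfrac{\Lambda_T}{2}\norm{z-z_T}^2 + \tsum_{t=0}^{T-1}\eta_t\Lambda_t\inner{\hat\zeta_t}{\hat z_t-z} \\
\le \tfrac{\Lambda_0}{2}\norm{z-z_0}^2 + 4\tsum_{t=0}^{T-1}\eta_t^2\Lambda_t\rbr{\norm{\zeta_t}^2+\norm{\hat\zeta_t}^2}.
\end{align*}

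Next I would take total expectation, with $z \in Z$ treated as a fixed deterministic reference point. The key observation is that $\hat z_t$ is a measurable function of $\xi_0,\hat\xi_0,\dots,\xi_{t-1},\hat\xi_{t-1},\xi_t$ only; conditioning on the $\sigma$-algebra $\mathcal{F}_t$ they generate makes $\hat z_t-z$ measurable, while $\hat\zeta_t=G(\hat z_t,\hat\xi_t)-G(\hat z_t)$ has zero conditional mean by unbiasedness of the stochastic oracle. Hence $\EE\inner{\hat\zeta_t}{\hat z_t-z}=0$ for every $t$ and the cross term vanishes. For the residual, the variance bound yields $\EE\norm{\zeta_t}^2\le\sigma^2$ and $\EE\norm{\hat\zeta_t}^2\le\sigma^2$, so in expectation the last sum is at most $8\sigma^2\tsum_{t=0}^{T-1}\eta_t^2\Lambda_t$.

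Finally, since $\overline z_T=\big(\tsum_{t=0}^{T-1}\eta_t\Lambda_t\hat z_t\big)/\big(\tsum_{t=0}^{T-1}\eta_t\Lambda_t\big)$ is a convex combination of the $\hat z_t$, convexity of $F(\cdot,y)$ and concavity of $F(x,\cdot)$ give $\big(\tsum_{t=0}^{T-1}\eta_t\Lambda_t\big)\sbr{F(\overline x_T,y)-F(x,\overline y_T)}\le\tsum_{t=0}^{T-1}\eta_t\Lambda_t\sbr{F(\hat x_t,y)-F(x,\hat y_t)}$. Dividing the expected telescoped inequality by $\tsum_{t=0}^{T-1}\eta_t\Lambda_t=(\Lambda_T-\Lambda_0)/\mu$ (again from \eqref{def_lambda_minimax}) and rewriting the coefficients produces exactly the claimed estimate. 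The only non-routine step — and the one I expect to require the most care — is the vanishing of the martingale cross term $\eta_t\Lambda_t\inner{\hat\zeta_t}{\hat z_t-z}$: it relies on the two-sample structure of each SREG iteration (that $\hat z_t$ is built from $\xi_t$ before $\hat\xi_t$ is sampled) so that the correct conditioning annihilates its expectation, and on keeping $z$ deterministic. The remainder is bookkeeping with the weights $\Lambda_t$, the identity $\tsum_{t=0}^{T-1}\eta_t\Lambda_t=(\Lambda_T-\Lambda_0)/\mu$, and the oracle's second-moment bound.
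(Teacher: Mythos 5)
Your proposal is correct and follows essentially the same route as the paper's proof: multiply the one-step bound of Lemma~\ref{lemma_seg_one_step_recursion_general} by $\Lambda_t$, telescope using $\Lambda_{t+1}=(1+\mu\eta_t)\Lambda_t$, normalize by $\tsum_{t=0}^{T-1}\eta_t\Lambda_t=(\Lambda_T-\Lambda_0)/\mu$ together with convexity--concavity of $F$, and take expectation so that the martingale term $\inner{\hat\zeta_t}{\hat z_t-z}$ vanishes and the noise terms are bounded by $\sigma^2$. Your added remark that the cross term only vanishes for a deterministic reference point $z$ is accurate and is precisely the limitation the paper later works around in Section~\ref{subsec_smeg_in_catalyst}.
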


\begin{proof}
Multiplying both sides of \eqref{seg_one_step_recursion_general} by $\Lambda_t$ yields 
\begin{align*}
 \eta_t \Lambda_t  \sbr{ F(\hat{x}_t, y) - F(x, \hat{y}_t)}   + \frac{(\mu \eta_t + 1)\Lambda_t}{2} \norm{z - z_{t+1}}^2   + \eta_t \Lambda_t \inner{\hat{\zeta}_t}{\hat{z}_t - z}   \leq \frac{\Lambda_t}{2} \norm{z - z_t}^2   + 4   \eta_t^2 \Lambda_t \rbr{ \norm{\zeta_t}^2 + \norm{\hat{\zeta}_t}^2}.
\end{align*}
Applying the definition of $\cbr{\Lambda_t}$, we can now take the telescopic sum of the above relation and obtain 
\begin{align*}
& \tsum_{t=0}^{T-1} \eta_t \Lambda_t  \sbr{ F(\hat{x}_t, y) - F(x, \hat{y}_t)}
+ \frac{(\mu \eta_{T-1} + 1) \Lambda_{T-1}}{2} \norm{z - z_T}^2 
+ \tsum_{t=0}^{T-1} \eta_t \Lambda_t  \inner{\hat{\zeta}_t}{\hat{z}_t - z}  \\
\leq & 
\frac{\Lambda_0}{2} \norm{z - z_0}^2 
+ 4 \tsum_{t=0}^{T-1}  \eta_t^2 \Lambda_t \rbr{ \norm{\zeta_t}^2 + \norm{\hat{\zeta}_t}^2} .
\end{align*}
Dividing both sides of the above relation by $\tsum_{t=0}^{T-1} \eta_t \Lambda_t = \frac{1}{\mu} \rbr{\Lambda_T - \Lambda_0}$, 
and making use of $F$ being convex-concave, 
we obtain 
\begin{align*}
& F(\overline{x}_T, y ) - F(x, \overline{y}_T)
+ \frac{\mu \Lambda_T}{2(\Lambda_T - \Lambda_0)} \norm{z - z_T}^2  \\
\leq & 
\frac{\mu \Lambda_0 }{2(\Lambda_T - \Lambda_0)} \norm{z - z_0}^2 
+ \frac{4 \mu}{\Lambda_T - \Lambda_0}  \tsum_{t=0}^{T-1}  \eta_t^2 \Lambda_t \rbr{ \norm{\zeta_t}^2 + \norm{\hat{\zeta}_t}^2} 
- \frac{\mu}{\Lambda_T -\Lambda_0} \tsum_{t=0}^{T-1} \eta_t \Lambda_t  \inner{\hat{\zeta}_t}{\hat{z}_t - z}.
\end{align*}
The desired claim follows after taking expectation on both sides of the above relation, 
and noting that $\EE [\inner{\hat{\zeta}_t}{\hat{z}_t - z}] = 0$. 
\end{proof}

 We now specify detailed choice of $\cbr{\eta_t}$ and obtain the concrete convergence characterization of SREG as an immediate consequence of Lemma \ref{lemma_seg_general_convergence}.

\begin{theorem}\label{thrm_seg_sublinear_deterministic_err}
Define $t_0 = 4 \ceil{{L}/{\mu}}$, and choose 
$
\eta_t = \frac{2}{\mu (t + t_0 + 1)}.
$
Then 
\begin{align*}
& \EE \sbr{
 \norm{z^* - z_T}^2  } \leq 
\frac{6  t_0^2}{T^2} \norm{z^* - z_0}^2 
+ \frac{768 \sigma^2}{\mu^2 T}.
\end{align*}
\end{theorem}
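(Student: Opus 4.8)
The plan is to specialize Lemma~\ref{lemma_seg_general_convergence} to the prescribed stepsize and reduce everything to an explicit evaluation of the weights $\{\Lambda_t\}$. First I would check the admissibility condition \eqref{seg_stepsize_condition}: since $t_0 = 4\ceil{L/\mu} \ge 4L/\mu$, we have $L\eta_t = 2L/(\mu(t+t_0+1)) \le 2L/(\mu(t_0+1)) < 1/2$ for every $t \ge 0$, so Lemma~\ref{lemma_seg_general_convergence} is applicable.

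Next I would compute $\{\Lambda_t\}$ in closed form. Since $\mu\eta_{t-1} = 2/(t+t_0)$, the recursion \eqref{def_lambda_minimax} gives $\Lambda_t = \prod_{s=1}^{t}\bigl(1 + \tfrac{2}{s+t_0}\bigr) = \prod_{s=1}^{t}\tfrac{s+t_0+2}{s+t_0}$, which telescopes to $\Lambda_t = \tfrac{(t+t_0+1)(t+t_0+2)}{(t_0+1)(t_0+2)}$; in particular $\Lambda_0 = 1$, and a short expansion yields $\Lambda_T - \Lambda_0 = \tfrac{T(T+2t_0+3)}{(t_0+1)(t_0+2)}$.

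Then I would invoke Lemma~\ref{lemma_seg_general_convergence} with $z = z^*$ and $(x,y) = (x^*,y^*)$. Because $z^*$ is a saddle point, $F(\overline{x}_T, y^*) - F(x^*, \overline{y}_T) \ge F(x^*,y^*) - F(x^*,y^*) = 0$, so that gap term may be discarded; dividing the resulting inequality through by $\mu\Lambda_T/(2(\Lambda_T - \Lambda_0))$ leaves
\begin{align*}
\EE\norm{z^* - z_T}^2 \le \frac{\Lambda_0}{\Lambda_T}\norm{z^* - z_0}^2 + \frac{16\sigma^2}{\Lambda_T}\sum_{t=0}^{T-1}\eta_t^2\Lambda_t.
\end{align*}
For the first term, $\Lambda_0/\Lambda_T = \tfrac{(t_0+1)(t_0+2)}{(T+t_0+1)(T+t_0+2)} \le 6t_0^2/T^2$, using $(t_0+1)(t_0+2)\le 6t_0^2$ for $t_0\ge1$ and $(T+t_0+1)(T+t_0+2)\ge T^2$. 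For the second, $\eta_t^2\Lambda_t = \tfrac{4(t+t_0+2)}{\mu^2(t+t_0+1)(t_0+1)(t_0+2)} \le \tfrac{8}{\mu^2(t_0+1)(t_0+2)}$, hence $\sum_{t=0}^{T-1}\eta_t^2\Lambda_t \le \tfrac{8T}{\mu^2(t_0+1)(t_0+2)}$ and $\tfrac{16\sigma^2}{\Lambda_T}\sum_{t=0}^{T-1}\eta_t^2\Lambda_t \le \tfrac{128\sigma^2 T}{\mu^2(T+t_0+1)(T+t_0+2)} \le \tfrac{768\sigma^2}{\mu^2 T}$. Adding the two bounds gives the claim.

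The only genuine computation is the telescoping of $\Lambda_t$ together with the two elementary estimates on $\Lambda_0/\Lambda_T$ and $\sum_{t}\eta_t^2\Lambda_t$; nothing here is delicate, and the stated constants $6$ and $768$ leave ample slack over the sharp values. The single point worth care is the legitimacy of dropping the ergodic gap term, which relies on evaluating Lemma~\ref{lemma_seg_general_convergence} precisely at the reference point $z = z^*$ so that $F(\overline{x}_T,y^*) - F(x^*,\overline{y}_T) \ge 0$.
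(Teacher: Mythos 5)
Your proposal is correct and follows essentially the same route as the paper: verify the stepsize condition $L\eta_t \le 1/2$, compute $\Lambda_t = \tfrac{(t+t_0+1)(t+t_0+2)}{(t_0+1)(t_0+2)}$ in closed form, apply Lemma~\ref{lemma_seg_general_convergence} at $(x,y)=(x^*,y^*)$ so the gap term is nonnegative and can be dropped, and finish with the two elementary estimates on $\Lambda_T-\Lambda_0$ and $\sum_t \eta_t^2\Lambda_t$. The only cosmetic difference is that you normalize by the full coefficient $\tfrac{\mu\Lambda_T}{2(\Lambda_T-\Lambda_0)}$ rather than first lower-bounding it by $\mu/2$, which yields a slightly tighter intermediate bound but the same conclusion.
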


\begin{proof}
Clearly, \eqref{seg_stepsize_condition} is satisfied by the choice of $\cbr{\eta_t}$.
Direct computation also yields 
$
\Lambda_t = \frac{(t+t_0 + 1)(t + t_0 + 2)}{(t_0 + 1) (t_0 + 2)}.
$
Applying Lemma \ref{lemma_seg_general_convergence} with $(x,y) = (x^*, y^*)$ then yields  
\begin{align*}
& \EE \sbr{ F(\overline{x}_T, y^* ) - F(x^*, \overline{y}_T)
+ \frac{\mu}{2} \norm{z^* - z_T}^2  } \leq 
\frac{\mu \Lambda_0 }{2(\Lambda_T - \Lambda_0)} \norm{z^* - z_0}^2 
+ \frac{8 \mu \sigma^2}{\Lambda_T - \Lambda_0}  \tsum_{t=0}^{T-1}  \eta_t^2 \Lambda_t .
\end{align*}
The desired claim then follows from the above relation and the following simple observation:
\begin{align*}
\Lambda_T - \Lambda_0 =  \frac{(T+t_0 + 1)(T + t_0 + 2)}{(t_0 + 1) (t_0 + 2)} - 1 \geq \frac{T^2}{6 t_0^2} ,
~ 
\tsum_{t = 0}^{T-1} \eta_t^2 \Lambda_t 
= \frac{4 (t+ t_0 +2)}{\mu^2 (t_0 + 1) (t_0 + 2) (t+t_0 + 1)} \leq \frac{8 T}{\mu^2 t_0^2}.
\end{align*}
\end{proof}

Theorem \ref{thrm_seg_sublinear_deterministic_err} implies an $\cO(\frac{L \norm{z^* - z_0}}{ \mu \sqrt{ \epsilon}} + \frac{\sigma^2}{\mu^2 \epsilon})$ iteration and sample complexities  for SREG to obtain an $\epsilon$-optimal solution. 
We next proceed to show that with a proper restarting technique, one can indeed substantially improve the reduction of the deterministic error within the SREG method.

\begin{theorem}\label{theorem_stoch_smeg}
Consider the multi-epoch version of SREG defined as follows. 
For any $e > 0$, within the $e$-th epoch, run SREG starting from $z_{(e-1)}$   for $T_e$ steps with parameters 
\begin{align*}
T_e =  \frac{6  t_0^2}{T_{e}^2} \EE \norm{z^* - z_{(e)}}^2 
+ \frac{768 \sigma^2}{\mu^2 T_{e}},~   \eta_t = \frac{2}{\mu (t + t_0 + 1)},
~ t_0 = 4 \ceil{\frac{L}{\mu}},
\end{align*}
and output $z_{(e)} \coloneqq z_{T_e}$. 
Then for any $z_{(0)} \in X \times Y$, by choosing $E = \log_2 \rbr{
\frac{\norm{z_{(0)} - z^*}^2}{\epsilon}
}$, we have 
\begin{align*}
\EE \sbr{
 \norm{ z_{(E)} - z^* }^2  } \leq \epsilon.
\end{align*}
The total iteration and sample complexities can be bounded by
\begin{align*}
\cO \rbr{
 \frac{L}{\mu}  \log_2 \rbr{\frac{\norm{z^* - z_0}}{\epsilon}}
 + \frac{\sigma^2}{\mu^2 \epsilon} 
 }.
\end{align*}
\end{theorem}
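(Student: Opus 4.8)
The plan is to layer a standard restarting argument on top of the single-run guarantee of Theorem~\ref{thrm_seg_sublinear_deterministic_err}, and prove by induction on the epoch index $e$ that $R_e^2 \coloneqq \EE\norm{z_{(e)} - z^*}^2 \le 2^{-e}\norm{z_{(0)} - z^*}^2$. The base case $e = 0$ is trivial. For the inductive step, the key point is that Theorem~\ref{thrm_seg_sublinear_deterministic_err}, although stated for a deterministic starting point, may be applied conditionally on the random restart point $z_{(e-1)}$ (the fresh samples drawn within epoch $e$ are independent of $z_{(e-1)}$); since its right-hand side is affine in $\norm{z^* - z_0}^2$, taking total expectation preserves the bound and yields the recursion
\begin{align*}
R_e^2 \le \frac{6 t_0^2}{T_e^2}\, R_{e-1}^2 + \frac{768\,\sigma^2}{\mu^2 T_e}.
\end{align*}

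Next I would choose $T_e$ so that each of the two terms on the right is at most $\tfrac14$ of the inductive bound $2^{-(e-1)}\norm{z_{(0)}-z^*}^2$ on $R_{e-1}^2$: the first term requires only $T_e$ of order $t_0$ (e.g.\ $T_e \ge \sqrt{24}\,t_0$, independent of $e$), while controlling the stochastic term forces $T_e$ of order $2^{e}\sigma^2/\rbr{\mu^2\norm{z_{(0)}-z^*}^2}$. This is the content of the (somewhat informally written) definition of $T_e$ in the statement, which should be read as ``take $T_e$ large enough that the bound of Theorem~\ref{thrm_seg_sublinear_deterministic_err} contracts the expected squared error by a factor $\tfrac12$.'' With such a $T_e$, the recursion gives $R_e^2 \le \tfrac12 R_{e-1}^2 \le 2^{-e}\norm{z_{(0)}-z^*}^2$, closing the induction; taking $E = \log_2\rbr{\norm{z_{(0)}-z^*}^2/\epsilon}$ then yields $R_E^2 \le \epsilon$.

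For the complexity bound I would sum $T_e$ over $e = 1,\dots,E$. The ``$t_0$'' contribution is $\cO(t_0 E)$, which is $\cO\rbr{\tfrac{L}{\mu}\log_2(\cdot)}$ since $t_0 = 4\ceil{L/\mu}$. The stochastic contribution is $\cO\rbr{\tfrac{\sigma^2}{\mu^2\norm{z_{(0)}-z^*}^2}\tsum_{e=1}^E 2^{e}} = \cO\rbr{\tfrac{\sigma^2}{\mu^2\norm{z_{(0)}-z^*}^2}\cdot 2^{E}} = \cO\rbr{\tfrac{\sigma^2}{\mu^2\epsilon}}$, using $2^E = \norm{z_{(0)}-z^*}^2/\epsilon$; the same count also bounds the number of \texttt{SFO} calls since SREG makes $\cO(T_e)$ oracle calls per epoch. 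Adding the two contributions gives the claimed $\cO\rbr{\tfrac{L}{\mu}\log_2(\cdot) + \tfrac{\sigma^2}{\mu^2\epsilon}}$.

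The step I expect to require the most care is the first: making rigorous that Theorem~\ref{thrm_seg_sublinear_deterministic_err} may be invoked with the random warm start $z_{(e-1)}$ and then chained in expectation — i.e.\ that the per-epoch randomness is independent of the conditioning and that the affine-in-$\norm{z^*-z_0}^2$ structure of the bound is retained under total expectation — together with pinning down the precise, epoch-dependent choice of $T_e$ implicit in the theorem so that the factor-$\tfrac12$ contraction holds uniformly in $e$. Everything after that is bookkeeping: an induction and a geometric-series estimate.
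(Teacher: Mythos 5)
Your proposal is correct and follows essentially the same route as the paper: apply the single-run bound of Theorem \ref{thrm_seg_sublinear_deterministic_err} epoch-by-epoch to obtain the recursion $R_e^2 \le \frac{6t_0^2}{T_e^2}R_{e-1}^2 + \frac{768\sigma^2}{\mu^2 T_e}$, close an induction by choosing $T_e = \cO\rbr{t_0 + 2^{e}\sigma^2/(\mu^2\norm{z_{(0)}-z^*}^2)}$ so that each epoch halves the expected squared distance, and sum the geometric series (the paper's explicit choice is $T_e = 6t_0 + \frac{768\cdot 2^{e+3}\sigma^2}{\mu^2\norm{z^*-z_0}^2}$, matching your reading of the circular $T_e$ in the statement). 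Your remark about conditioning on the random warm start is a point the paper's proof passes over silently, but it is handled exactly as you describe.
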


\begin{proof}
In view of Theorem \ref{thrm_seg_sublinear_deterministic_err}, we obtain  
\begin{align*}
& \EE \sbr{
 \norm{z^* - z_{(e+1)}}^2  } \leq 
\frac{6  t_0^2}{T_{e}^2} \EE \norm{z^* - z_{(e)}}^2 
+ \frac{768 \sigma^2}{\mu^2 T_{e}}.
\end{align*}
It can be readily seen that from an induction argument, by choosing 
$
T_{e} = 6 t_0 +   \frac{768 \cdot 2^{e+3} \sigma^2}{\mu^2 \norm{z^* - z_0}^2},
$
we have 
\begin{align*}
\EE \sbr{
 \norm{z^* - z_{(e)}}^2  } \leq 2^{-e}  \norm{z^* - z_{(0)}}^2, \forall e \geq 0.
\end{align*}
The total number of iterations to obtain $\EE \sbr{
 \norm{z^* - z_{(e)}}^2  } \leq \epsilon$ is thus bounded by 
 \begin{align*}
 \cO \rbr{
 t_0 \log_2 \rbr{\frac{\norm{z^* - z_0}}{\epsilon}}
 + \frac{\sigma^2}{\mu^2 \epsilon} 
 }
 = 
  \cO \rbr{
 \frac{L}{\mu}  \log_2 \rbr{\frac{\norm{z^* - z_0}}{\epsilon}}
 + \frac{\sigma^2}{\mu^2 \epsilon} 
 }.
 \end{align*}
 The proof is then completed.
\end{proof}

A few remarks are in order before we conclude  this section. 
In view of Theorem \ref{theorem_stoch_smeg}, the proposed REG method obtains the optimal iteration and sample complexities for solving smooth and strongly monotone variational inequalities. 
It should be noted that similar performances have been obtained recently  in \cite{kotsalis2022simple} and \cite{beznosikov2021distributed}. 
Nevertheless, given our discussions in Section \ref{sec_intro}, these methods are  clearly non-optimal for our problem of interest \eqref{def_problem} even in the deterministic setting. 
In following sections, we show that the error condition associated with the REG method makes it natural candidate for the catalyst scheme to be developed for  \eqref{def_problem}.
Consequently, one can accelerate the REG method and obtain optimal iteration and sample complexities up to logarithmic factors.


\section{Deterministic  Minimax Catalyst Scheme}\label{sec_minimax_deterministic}
In this section, we formally introduce the catalyst scheme for the minimax problem \eqref{def_problem}.
Compared to Section \ref{sec_eg}, we no longer require the problem to be strongly-convex-strongly-concave.
Instead, going forward we only assume that \eqref{def_problem} is always strongly-concave w.r.t. $y$.  
It should be noted that when $\mu_d = 0$ and subsequently $\mu_p = 0$ (since $\mu_d \geq \mu_p$ given our discussion in Section \ref{sec_intro}), one can simply add a proper strongly-convex-strongly-concave perturbation before applying  the catalyst scheme.
As will be clarified by our ensuing discussion, this simple reduction technique again would yield optimal iteration and sample complexities up to logarithmic factors.

The proposed catalyst scheme for minimax problem (\texttt{Catalyst-Minimax}) is presented in Algorithm \ref{alg_catalyst_deterministic}.
Similar to \eqref{eq:cp_subproblem}, at the $k$-th iteration, 
for a given prox-center $\hat x_k$ and some $\beta_k \ge 0$, we 
denote 
\begin{align} \label{eq:def_Phi_sd}
 \Phi_k(x, y) := F(x, y) + \tfrac{\beta_k}{2} \|x - \hat x_k\|^2,
\end{align}
and define the proximal subproblem as 
\begin{align} \label{eq:cp_subproblem_minmax}
\min_{x \in X} \left\{ \phi_k(x) := \max_{y \in Y} \Phi_k(x, y) \right\}.
\end{align}
We assume the catalyst scheme has access to a  to-be-catalyzed method $\cA$, which can find a pair of solution $(\tilde z_k, z_k) \in X \times X$ for \eqref{eq:cp_subproblem_minmax} such that 
\begin{align} \label{eq:cp_inexact_sd}
 \Phi_k(\tilde x_k, \tilde y) - \Phi_k(\tilde x, \tilde y_k) + \tfrac{\alpha_k}{2}  \|\tilde z - z_k\|^2  
\le  \tfrac{\varepsilon_k}{2} \|\tilde z - z_k^0 \|^2 , 
\end{align}
for any $\tilde z \in Z$, where $z_k^0 = (x_k^0, y_k^0)$
is a given pair of starting points for method $\cA$.

\begin{algorithm}[H]
\caption{\texttt{Catalyst-Minimax}$(\cA)$: catalyst scheme for minimax optimization}
\begin{algorithmic}\label{alg_catalyst_deterministic}
\STATE{ {\bf Input:} initial points $\bar z_0 = \tilde z_0 = z_0$, and algorithmic parameters $\{\alpha_k\}$, $\{\beta_k\}$, $\{\gamma_k\}$, and $\{\varepsilon_k\}$.}
\FOR{$k =1, 2, \ldots,$}
\STATE{
\vspace{-0.2in}
\begin{align}
\hat x_k &= \gamma_k \bar x_{k-1} + (1-\gamma_k) \tilde x_{k-1}.  \label{eq:define_hat_x_sd}\\
 (\tilde z_k, z_k) &  = \cA(\Phi_k) ~  \text{s.t. \eqref{eq:cp_inexact_sd} holds with $x_k^0 = \hat x_k$ and $y_k^0=y_{k-1}$.} \label{eq:define_init_point} \\
\bar x_{k} &= 
 \tfrac{1}{\alpha_k \gamma_k  +  \mu_p (1-\gamma_k) }\left[\alpha_k x_k + ( \mu_p -  \alpha_k) (1-\gamma_k) \tilde x_{k-1}\right]. \label{eq:define_bar_x_sd}
\end{align}
}
\ENDFOR
\end{algorithmic} \label{alg:basic_cat_sd}
\end{algorithm}

It should be noted that although both $\alpha_k$ and $\varepsilon_k$ are involved in defining the approximate update \eqref{eq:cp_inexact_sd}, the catalyst scheme only requires $\alpha_k$ to perform subsequent updates \eqref{eq:define_hat_x_sd} and \eqref{eq:define_bar_x_sd}. 
Clearly, when $\alpha_k = \beta_k$ and $\varepsilon_k  = 0$,  the proximal step \eqref{eq:def_Phi_sd} is computed exactly, and the catalyst scheme reduces to the accelerated proximal point method applied to \eqref{def_problem}. 
Consequently, \eqref{eq:cp_inexact_sd} can be viewed as characterizing the approximate update of the proximal step.
For the remainder of our discussion in this section, we establish the convergence of the catalyst scheme with proper conditions imposed on \eqref{eq:cp_inexact_sd}, and subsequently invoke the REG method developed in Section \ref{sec_eg} to certify such conditions. 
As our initial step, we first make the following technical observation that relates the distance on the dual space with the primal optimality gap.

\begin{lemma}\label{lemma_dual_dist_to_primal}
Let $(x^*,y^*)$ be a pair of optimal primal and dual solutions of \eqref{def_problem}.
Define
\begin{align} \label{eq:def_yk}
\tilde y_k^* := \argmax_{y \in Y} \Phi_k(\tilde x_k, y).
\end{align}
Then
\begin{align*}
 \tfrac{\mu_d}{2} \|y^* - \tilde y_k^* \|^2 \le f(\tilde x_k) - f(x^*).
\end{align*}
\end{lemma}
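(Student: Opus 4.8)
The plan is to exploit the fact that $\tilde y_k^*$ is the maximizer of $\Phi_k(\tilde x_k, \cdot)$ over $Y$, together with strong concavity of $\Phi_k$ in $y$, and then relate $\Phi_k$ back to $F$ and hence to $f$. First I would observe that $\Phi_k(x,y) = F(x,y) + \tfrac{\beta_k}{2}\|x-\hat x_k\|^2$ differs from $F$ only by a term independent of $y$, so $\Phi_k(\tilde x_k,\cdot)$ inherits the $\mu_d$-strong concavity of $F(\tilde x_k,\cdot)$ from \eqref{sc_y}. Since $\tilde y_k^*$ maximizes $\Phi_k(\tilde x_k,\cdot)$ over the convex set $Y$, the first-order optimality condition gives $\langle \nabla_y \Phi_k(\tilde x_k,\tilde y_k^*), y^* - \tilde y_k^*\rangle \le 0$. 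Combining this with the strong concavity inequality applied at the points $\tilde y_k^*$ and $y^*$ yields
\begin{align*}
\Phi_k(\tilde x_k, y^*) \le \Phi_k(\tilde x_k, \tilde y_k^*) - \tfrac{\mu_d}{2}\|y^* - \tilde y_k^*\|^2.
\end{align*}

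Next I would rewrite both sides in terms of $F$ and $f$. On the left, $\Phi_k(\tilde x_k, y^*) = F(\tilde x_k, y^*) + \tfrac{\beta_k}{2}\|\tilde x_k - \hat x_k\|^2 \ge F(\tilde x_k,y^*)$ since $\beta_k \ge 0$; and because $(x^*,y^*)$ is a saddle point, $F(\tilde x_k, y^*) \ge F(x^*,y^*) = f(x^*)$. Actually, even more simply, $F(\tilde x_k, y^*)$ can be any value, but what I really want is the lower bound via the saddle-point property: $F(\tilde x_k, y^*) \ge \min_x F(x,y^*) $ — no, the clean route is $f(x^*) = \max_y F(x^*,y) = F(x^*,y^*) \le F(\tilde x_k, y^*)$ using that $y^*$ together with $x^*$ forms a saddle point so $F(x^*,y^*) \le F(\tilde x_k, y^*)$ for all $\tilde x_k$ — wait, that direction is $F(x^*,y) \le F(x^*,y^*) \le F(x,y^*)$, so indeed $F(\tilde x_k, y^*) \ge F(x^*,y^*) = f(x^*)$. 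On the right, $\Phi_k(\tilde x_k,\tilde y_k^*) = \max_y \Phi_k(\tilde x_k,y) \le \max_y F(\tilde x_k,y) + \tfrac{\beta_k}{2}\|\tilde x_k-\hat x_k\|^2$; hmm, this introduces the proximal term on the wrong side.

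To avoid that nuisance, I would instead work directly with $f$: note $\Phi_k(\tilde x_k, \tilde y_k^*) = \phi_k(\tilde x_k)$ and bound $\phi_k(\tilde x_k) - \tfrac{\beta_k}{2}\|\tilde x_k - \hat x_k\|^2 = \max_y F(\tilde x_k,y) = f(\tilde x_k)$; then the chain becomes
\begin{align*}
f(x^*) + \tfrac{\mu_d}{2}\|y^* - \tilde y_k^*\|^2
&\le F(\tilde x_k, y^*) + \tfrac{\beta_k}{2}\|\tilde x_k - \hat x_k\|^2 + \tfrac{\mu_d}{2}\|y^*-\tilde y_k^*\|^2 \\
&\le \Phi_k(\tilde x_k, \tilde y_k^*) = f(\tilde x_k) + \tfrac{\beta_k}{2}\|\tilde x_k - \hat x_k\|^2,
\end{align*}
and cancelling the common $\tfrac{\beta_k}{2}\|\tilde x_k - \hat x_k\|^2$ term gives exactly $\tfrac{\mu_d}{2}\|y^* - \tilde y_k^*\|^2 \le f(\tilde x_k) - f(x^*)$. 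The only genuine content is the strong-concavity-plus-optimality step producing the displayed inequality; everything else is bookkeeping with the definition of $f$ as a max and the saddle-point inequality $F(x^*,y^*) \le F(\tilde x_k, y^*)$. I do not anticipate a real obstacle here — the subtlety is just making sure the $\tfrac{\beta_k}{2}\|\tilde x_k - \hat x_k\|^2$ terms appear on both sides so they cancel, rather than leaving a stray nonnegative term that would break the bound.
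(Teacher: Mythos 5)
Your proof is correct and follows essentially the same route as the paper's: strong concavity in $y$ plus first-order optimality of $\tilde y_k^*$ gives $F(\tilde x_k,\tilde y_k^*)\ge F(\tilde x_k,y^*)+\tfrac{\mu_d}{2}\|y^*-\tilde y_k^*\|^2$, and the saddle-point inequality $F(\tilde x_k,y^*)\ge F(x^*,y^*)=f(x^*)$ finishes it. The paper merely shortcuts your $\beta_k$-term cancellation by noting that $\argmax_y \Phi_k(\tilde x_k,y)=\argmax_y F(\tilde x_k,y)$ since the proximal term is independent of $y$, which is the same bookkeeping observation you make.
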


\begin{proof}
By definition, we have
\begin{align*}
\tilde y_k^* =  \argmax_{y \in Y} \Phi_k(\tilde x_k, y) = \argmax_{y \in Y} F(\tilde x_k, y).
\end{align*}
Given the optimality condition of $\tilde y_k^*$, we further have
\begin{align*}
f(\tilde x_k) = F(\tilde x_k, \tilde y_k^*) \ge F(\tilde x_k, y^*) + \tfrac{\mu_d}{2} \|y^* - \tilde y_k^*\|^2 
\geq F(x^*, y^*) + \tfrac{\mu_d}{2} \|y^* - \tilde y_k^*\|^2
= f(x^*) + \tfrac{\mu_d}{2} \|y^* - \tilde y_k^*\|^2.
\end{align*}
The proof is then completed.
\end{proof}

With Lemma \ref{lemma_dual_dist_to_primal} in place, we proceed to characterize each step of the proposed catalyst scheme. 

\begin{lemma}\label{lemma_recursion_minimax_catalyst_one_step_deterministic}
Within Algorithm~\ref{alg:basic_cat_sd}, 
for any $k \ge 1$, we have
\begin{align}
&\rbr{1-\tfrac{4 \varepsilon_k}{\mu_d}} \sbr{ f(\tilde x_k) - f(x^*)}
+ \tfrac{\alpha_k \gamma_k^2 + \gamma_k(1-\gamma_k) \mu_p}{2} \|x^* -  \bar x_k\|^2 +  \tfrac{\alpha_k}{2}  \|\tilde y_k^* - y_k\|^2 \nonumber  \\
\leq & \rbr{1-\gamma_k + \tfrac{4 \varepsilon_k}{\mu_d}} \sbr{ f(\tilde x_{k-1}) - f(x^*)} +
 \tfrac{(\beta_k + \varepsilon_k) \gamma_k^2}{2} \|x^* - \bar x_{k-1}  \|^2 + \varepsilon_k \|\tilde y_{k-1}^* - y_{k-1}\|^2  . \label{minimax_outer_one_step_recursion}
\end{align}
\end{lemma}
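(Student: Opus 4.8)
The plan is to follow the scheme of Lemma~\ref{convex_each_step_characterization}, feeding the inexactness condition~\eqref{eq:cp_inexact_sd} a carefully chosen test pair and, at the end, converting the leftover dual displacements into primal optimality gaps via Lemma~\ref{lemma_dual_dist_to_primal}. Concretely, I would apply \eqref{eq:cp_inexact_sd} with the test point $\tilde z=(\tilde x,\tilde y)$ given by $\tilde y=\tilde y_k^*$ and $\tilde x=\gamma_k x^*+(1-\gamma_k)\tilde x_{k-1}\in X$. Since the quadratic penalty in $\Phi_k$ does not involve $y$, we have $\Phi_k(\tilde x_k,\tilde y_k^*)=\phi_k(\tilde x_k)\ge f(\tilde x_k)$ (using $\beta_k\ge 0$), while the choice $z_k^0=(\hat x_k,y_{k-1})$ from \eqref{eq:define_init_point} splits the right-hand side of \eqref{eq:cp_inexact_sd} into $\tfrac{\varepsilon_k}{2}\|\tilde x-\hat x_k\|^2+\tfrac{\varepsilon_k}{2}\|\tilde y_k^*-y_{k-1}\|^2$ and the $\alpha_k$-term into $\tfrac{\alpha_k}{2}\|\tilde x-x_k\|^2+\tfrac{\alpha_k}{2}\|\tilde y_k^*-y_k\|^2$.

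Next I would bound $\Phi_k(\tilde x,\tilde y_k)=F(\tilde x,\tilde y_k)+\tfrac{\beta_k}{2}\|\tilde x-\hat x_k\|^2$ from above. For the $F$-term, $\mu_p$-strong convexity of $F(\cdot,\tilde y_k)$ along the segment joining $x^*$ and $\tilde x_{k-1}$ (cf.~\eqref{sc_x}), together with $F(x^*,\tilde y_k)\le f(x^*)$ and $F(\tilde x_{k-1},\tilde y_k)\le f(\tilde x_{k-1})$, gives
\begin{align*}
F(\tilde x,\tilde y_k)\le\gamma_k f(x^*)+(1-\gamma_k)f(\tilde x_{k-1})-\tfrac{\mu_p\gamma_k(1-\gamma_k)}{2}\|x^*-\tilde x_{k-1}\|^2 ;
\end{align*}
for the quadratic term, substituting $\hat x_k=\gamma_k\bar x_{k-1}+(1-\gamma_k)\tilde x_{k-1}$ from \eqref{eq:define_hat_x_sd} yields the exact identity $\tilde x-\hat x_k=\gamma_k(x^*-\bar x_{k-1})$, so both $\tfrac{\beta_k}{2}\|\tilde x-\hat x_k\|^2$ and $\tfrac{\varepsilon_k}{2}\|\tilde x-\hat x_k\|^2$ collapse to $\tfrac{\beta_k\gamma_k^2}{2}\|x^*-\bar x_{k-1}\|^2$ and $\tfrac{\varepsilon_k\gamma_k^2}{2}\|x^*-\bar x_{k-1}\|^2$. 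Plugging these into the instantiated \eqref{eq:cp_inexact_sd} and rearranging (using $\gamma_k f(x^*)=f(x^*)-(1-\gamma_k)f(x^*)$) gives
\begin{align*}
& [f(\tilde x_k)-f(x^*)]+\tfrac{\alpha_k}{2}\|\tilde x-x_k\|^2+\tfrac{\mu_p\gamma_k(1-\gamma_k)}{2}\|x^*-\tilde x_{k-1}\|^2+\tfrac{\alpha_k}{2}\|\tilde y_k^*-y_k\|^2 \\
\leq & \,(1-\gamma_k)[f(\tilde x_{k-1})-f(x^*)]+\tfrac{(\beta_k+\varepsilon_k)\gamma_k^2}{2}\|x^*-\bar x_{k-1}\|^2+\tfrac{\varepsilon_k}{2}\|\tilde y_k^*-y_{k-1}\|^2 .
\end{align*}
The primal distances are then aggregated exactly as in \eqref{min_aggregate_x} (with $\mu\mapsto\mu_p$, $x\mapsto x^*$): convexity of $\|\cdot\|^2$ and the definition~\eqref{eq:define_bar_x_sd} of $\bar x_k$ give $\tfrac{\alpha_k}{2}\|\gamma_k x^*+(1-\gamma_k)\tilde x_{k-1}-x_k\|^2+\tfrac{\mu_p\gamma_k(1-\gamma_k)}{2}\|x^*-\tilde x_{k-1}\|^2\ge\tfrac{\alpha_k\gamma_k^2+\gamma_k(1-\gamma_k)\mu_p}{2}\|x^*-\bar x_k\|^2$.

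The remaining, genuinely new step is to replace $\tfrac{\varepsilon_k}{2}\|\tilde y_k^*-y_{k-1}\|^2$ by the terms appearing in \eqref{minimax_outer_one_step_recursion}. I would use $\tfrac12\|a+b\|^2\le\|a\|^2+\|b\|^2$ with $a=\tilde y_k^*-\tilde y_{k-1}^*$ and $b=\tilde y_{k-1}^*-y_{k-1}$ to get $\tfrac{\varepsilon_k}{2}\|\tilde y_k^*-y_{k-1}\|^2\le\varepsilon_k\|\tilde y_k^*-\tilde y_{k-1}^*\|^2+\varepsilon_k\|\tilde y_{k-1}^*-y_{k-1}\|^2$, the last term already being the target; then bound $\|\tilde y_k^*-\tilde y_{k-1}^*\|^2\le 2\|\tilde y_k^*-y^*\|^2+2\|y^*-\tilde y_{k-1}^*\|^2$ and invoke Lemma~\ref{lemma_dual_dist_to_primal} at index $k$ together with its verbatim analogue at index $k-1$ (valid since $\tilde y_{k-1}^*$ maximizes $F(\tilde x_{k-1},\cdot)$), obtaining $\|\tilde y_k^*-\tilde y_{k-1}^*\|^2\le\tfrac{4}{\mu_d}[f(\tilde x_k)-f(x^*)]+\tfrac{4}{\mu_d}[f(\tilde x_{k-1})-f(x^*)]$. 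Substituting this back and moving the resulting $\tfrac{4\varepsilon_k}{\mu_d}[f(\tilde x_k)-f(x^*)]$ term to the left-hand side produces exactly \eqref{minimax_outer_one_step_recursion}.

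The main obstacle is precisely this last maneuver. Unlike the convex case, the inexact solve leaves behind a dual displacement measured against the previous \emph{iterate} $y_{k-1}$ rather than against a fixed reference, so one must both introduce the ``correct'' surrogate reference $\tilde y_{k-1}^*$ and control $\|\tilde y_k^*-\tilde y_{k-1}^*\|$ through the primal-gap estimate of Lemma~\ref{lemma_dual_dist_to_primal} applied at two consecutive iterates, carefully tracking constants through the two Young-type splits so that the coefficient $4\varepsilon_k/\mu_d$ lands on both the $f(\tilde x_k)$ and $f(\tilde x_{k-1})$ terms. Everything else is a routine transcription of the proof of Lemma~\ref{convex_each_step_characterization}.
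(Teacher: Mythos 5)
Your proposal is correct and follows essentially the same route as the paper's proof: test \eqref{eq:cp_inexact_sd} at $\tilde y=\tilde y_k^*$ and $\tilde x=\gamma_k x^*+(1-\gamma_k)\tilde x_{k-1}$, decompose $\|\tilde y_k^*-y_{k-1}\|^2$ through $\tilde y_{k-1}^*$ and $y^*$, invoke Lemma~\ref{lemma_dual_dist_to_primal} at indices $k$ and $k-1$, and aggregate the primal distances via \eqref{min_aggregate_x}; the constants work out identically. The only (cosmetic) differences are that you specialize the test point from the outset and apply strong convexity to $F(\cdot,\tilde y_k)$ directly, whereas the paper first passes to $F(\tilde x,\tilde y_k)\le f(\tilde x)$ and then uses strong convexity of $f$.
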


\begin{proof}
By \eqref{eq:cp_inexact_sd} and the selection of initial points in \eqref{eq:define_init_point}, we have
\begin{align*}
  \Phi_k(\tilde x_k, \tilde y) - \Phi_k(\tilde x, \tilde y_k) + \tfrac{\alpha_k}{2}  \|\tilde z - z_k\|^2  
\le  \tfrac{\varepsilon_k}{2} [ \|\tilde x - \hat x_k \|^2 + \|\tilde y - y_{k-1}\|^2] ,  ~ \forall \tilde z \in Z,
\end{align*} 
which, in view of the definition of $\Phi_k$ in \eqref{eq:def_Phi_sd},  implies that
\begin{align*}
  F(\tilde x_k, \tilde y) - F(\tilde x, \tilde y_k)  
 + \tfrac{\alpha_k}{2}  \|\tilde z - z_k\|^2  
\le \tfrac{\beta_k + \varepsilon_k}{2}  \|\tilde x - \hat x_k \|^2 +\tfrac{\varepsilon_k}{2} \|\tilde y - y_{k-1}\|^2
, ~ \forall \tilde z \in Z.
\end{align*}
Using the above inequality and the fact that $F(\tilde x, \tilde y_k) \le f(\tilde x)$, we obtain
\begin{align*}
  F(\tilde x_k, \tilde y) - f(\tilde x) 
 + \tfrac{\alpha_k}{2}  \|\tilde z - z_k\|^2  
\le \tfrac{\beta_k + \varepsilon_k}{2} \|\tilde x - \hat x_k \|^2 + \tfrac{\varepsilon_k}{2} \|\tilde y - y_{k-1}\|^2 , ~ \forall \tilde z \in Z.
\end{align*}
Setting $\tilde y = \tilde y_k^*$
in the above relation yields
\begin{align*}
f(\tilde x_k) - f(\tilde x) 
+  \tfrac{\alpha_k}{2}  [\|\tilde x - x_k\|^2  + \|\tilde y_k^* - y_k\|^2]
\le  \tfrac{\beta_k + \varepsilon_k}{2} \|\tilde x - \hat x_k \|^2 +\tfrac{\varepsilon_k}{2} \|\tilde y_k^* - y_{k-1}\|^2
\end{align*}
for any $\tilde x \in X$. 
In addition, we also have
\begin{align*}
\|\tilde y_k^* - y_{k-1}\|^2 &\le 2 \|\tilde y_k^* - \tilde y_{k-1}^*\|^2 + 2 \|\tilde y_{k-1}^* - y_{k-1}\|^2\\
&\le   4 (\|\tilde y_k^* - y^*\|^2 + \| \tilde y_{k-1}^* - y^*\|^2) + 2 \|\tilde y_{k-1}^* - y_{k-1}\|^2\\
&\le \tfrac{8}{\mu_d} \left [f(\tilde x_k) - f(x^*) + f(\tilde x_{k-1}) - f(x^*)\right] + 2 \|\tilde y_{k-1}^* - y_{k-1}\|^2.
\end{align*}
Combining the previous two relations, we have
\begin{align*}
&f(\tilde x_k) - f(\tilde x)  
+  \tfrac{\alpha_k}{2}  [\|\tilde x - x_k\|^2  + \|\tilde y_k^* - y_k\|^2] \\
\leq &  \tfrac{\beta_k + \varepsilon_k}{2} \|\tilde x - \hat x_k \|^2 
+  \tfrac{4 \varepsilon_k}{\mu_d} \left [f(\tilde x_k) - f(x^*) + f(\tilde x_{k-1}) - f(x^*)\right] + \varepsilon_k \|\tilde y_{k-1}^* - y_{k-1}\|^2.
\end{align*}
Setting $\tilde x = \gamma_k x^* + (1 - \gamma_k) \tilde x_{k-1}$ and using the (strong) convexity
of $f$, we obtain
\begin{align*}
&f(\tilde x_k) - \gamma_k f(x^*) - (1-\gamma_k) f( \tilde x_{k-1})
+ \tfrac{\mu_p \gamma_k (1 -\gamma_k)}{2} \|x^* -  \tilde x_{k-1}\|^2 +  \tfrac{\alpha_k}{2}  \|\tilde y_k^* - y_k\|^2 \\
& ~~~ +  \tfrac{\alpha_k}{2}  [\|\gamma_k x^* + (1 - \gamma_k) \tilde x_{k-1} - x_k\|^2  + \|\tilde y_k^* - y_k\|^2] \\
\leq &  \tfrac{\beta_k + \varepsilon_k}{2} \|\gamma_k x^* + (1 - \gamma_k) \tilde x_{k-1} - \hat x_k \|^2 
+  \tfrac{4 \varepsilon_k}{\mu_d} \left [f(\tilde x_k) - f(x^*) + f(\tilde x_{k-1}) - f(x^*)\right] + \varepsilon_k \|\tilde y_{k-1}^* - y_{k-1}\|^2,
\end{align*}
which, after substituting the definition of $\hat{x}_k$ in \eqref{eq:define_hat_x_sd}, becomes  
\begin{align*}
& \rbr{1 - \frac{4 \varepsilon_k}{\mu_d}} 
\sbr{f(\tilde{x}_k) - f(x^*)} 
+  \tfrac{\alpha_k}{2}  \|\gamma_k x^* + (1 - \gamma_k) \tilde x_{k-1} - x_k\|^2 
+  \tfrac{\mu_p \gamma_k (1 -\gamma_k)}{2} \|x^* -  \tilde x_{k-1}\|^2 
+ \tfrac{\alpha_k}{2} \norm{\tilde{y}_k^* - y_k}^2 \\
\leq & 
(1-\gamma_k + \frac{4 \varepsilon_k}{\mu_d}) \sbr{f(\tilde{x}_{k-1}) - f(x^*)} 
+ \tfrac{(\beta_k + \varepsilon_k) \gamma_k^2}{2} \norm{x^* - \overline{x}_{k-1}}^2 
+ \varepsilon_k \|\tilde y_{k-1}^* - y_{k-1}\|^2.
\end{align*}
It remains to make use of observation \eqref{min_aggregate_x} again, which simplifies the above inequality into 
\begin{align*}
& \rbr{1 - \frac{4 \varepsilon_k}{\mu_d}} 
\sbr{f(\tilde{x}_k) - f(x^*)} 
+ \frac{\alpha_k \gamma_k^2 + \gamma_k(1-\gamma_k) \mu_p}{2} \norm{x^* - \overline{x}_k}^2  
+ \tfrac{\alpha_k}{2} \norm{\tilde{y}_k^* - y_k}^2 \\
\leq & 
\rbr{ 1-\gamma_k + \frac{4 \varepsilon_k}{\mu_d}} \sbr{f(\tilde{x}_{k-1}) - f(x^*)} 
+ \tfrac{(\beta_k + \varepsilon_k) \gamma_k^2}{2} \norm{x^* - \overline{x}_{k-1}}^2 
+ \varepsilon_k \|\tilde y_{k-1}^* - y_{k-1}\|^2.
\end{align*} 
The proof is then completed.
\end{proof}

We now turn our attention to the global convergence of the proposed catalyst scheme under proper requirements of the approximate proximal update \eqref{eq:cp_inexact_sd}.

\begin{lemma}\label{lemma_minimax_outer_recursion_raw}
Suppose \eqref{eq:cp_inexact_sd} holds, and 
\begin{align}
\frac{\alpha_k \gamma_k^2}{1 - 4 \varepsilon_k / \mu_d}
& \geq \frac{(\beta_{k+1} + \varepsilon_{k+1}) \gamma_{k+1}^2 }{1 - \gamma_{k+1} + 4 \varepsilon_{k+1} / \mu_d}, \label{minimax_outer_param_choice_11} \\
 \frac{\varepsilon_{k+1}}{\alpha_k}  & \leq \frac{1 - \gamma_{k+1} + 4 \varepsilon_{k+1} / \mu_d}{2 (1 - 4 \varepsilon_k / \mu_d)} .\label{minimax_outer_param_choice_12}
\end{align}
Define 
\begin{align}\label{def_Gamma_minimax_deterministic}
\Gamma_k = 
\begin{cases}
1, ~ & k = 1;  \\
\Gamma_{k-1} \frac{1 - \gamma_k + 4 \varepsilon_k / \mu_d}{1 - 4 \varepsilon_k / \mu_d}, ~ & k \geq 2.
\end{cases}
\end{align}
Then we have 
\begin{align*}
& {f(\tilde{x}_k) - f(x^*)} + \frac{\alpha_k}{2 (1-4 \varepsilon_k / \mu_d) } \norm{\tilde{y}_k^* - y_k}^2 \\
\leq  &
\Gamma_k \sbr{ \frac{1-\gamma_1 + 4\varepsilon_1 / \mu_d}{(1 - 4 \varepsilon_1/\mu_d)}  \sbr{f(\tilde{x}_0) - f(x^*)}  + \frac{(\beta_1 + \varepsilon_1) \gamma_1^2}{2 (1 - {4 \varepsilon_1}/{\mu_d}) }  \norm{x - \tilde{x}_0}^2   + 
\frac{\varepsilon_1}{(1 - {4 \varepsilon_1}/{\mu_d}) } \norm{\tilde{y}_0^* - y_0}^2
}.
\end{align*}
\end{lemma}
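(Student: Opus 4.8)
The statement is the minimax counterpart of Proposition~\ref{the:cat_main}, so the plan is to normalize the one-step bound \eqref{minimax_outer_one_step_recursion} of Lemma~\ref{lemma_recursion_minimax_catalyst_one_step_deterministic} by the sequence $\cbr{\Gamma_k}$ and then telescope. Two features set it apart from the convex-optimization case: the leading term $f(\tilde x_k)-f(x^*)$ now carries the shrinking coefficient $1-4\varepsilon_k/\mu_d$, which must be folded into the normalization, and an extra dual-distance term $\norm{\tilde y_k^*-y_k}^2$ is carried along and has to telescope alongside the primal prox-distance term $\norm{x^*-\bar x_k}^2$.

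First I would divide \eqref{minimax_outer_one_step_recursion} through by $(1-4\varepsilon_k/\mu_d)\Gamma_k$, which is positive under the standing parameter assumptions. Using the recursive definition \eqref{def_Gamma_minimax_deterministic}, for $k\ge 2$ one has $(1-4\varepsilon_k/\mu_d)\Gamma_k=(1-\gamma_k+4\varepsilon_k/\mu_d)\Gamma_{k-1}$, so the coefficient of $f(\tilde x_{k-1})-f(x^*)$ on the right-hand side collapses to $1/\Gamma_{k-1}$, which is exactly its coefficient on the left-hand side at index $k-1$; this is what drives the telescoping of the objective-gap terms. Denoting by $p_k$ and $q_k$ the resulting normalized coefficients of $\norm{x^*-\bar x_k}^2$ and $\norm{\tilde y_k^*-y_k}^2$ on the left, and by $p_k'$ and $q_k'$ the normalized coefficients of $\norm{x^*-\bar x_{k-1}}^2$ and $\norm{\tilde y_{k-1}^*-y_{k-1}}^2$ on the right, the normalized recursion reads, for $k\ge 2$, $\tfrac{f(\tilde x_k)-f(x^*)}{\Gamma_k}+p_k\norm{x^*-\bar x_k}^2+q_k\norm{\tilde y_k^*-y_k}^2\le \tfrac{f(\tilde x_{k-1})-f(x^*)}{\Gamma_{k-1}}+p_k'\norm{x^*-\bar x_{k-1}}^2+q_k'\norm{\tilde y_{k-1}^*-y_{k-1}}^2$.

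The key step is to check that $p_k'\le p_{k-1}$ and $q_k'\le q_{k-1}$ for every $k\ge 2$, so the quadratic terms telescope as well. For the primal term, rewrite $p_k'=\tfrac{1}{2\Gamma_{k-1}}\cdot\tfrac{(\beta_k+\varepsilon_k)\gamma_k^2}{1-\gamma_k+4\varepsilon_k/\mu_d}$ via the same $\Gamma$-identity, bound $\tfrac{(\beta_k+\varepsilon_k)\gamma_k^2}{1-\gamma_k+4\varepsilon_k/\mu_d}\le\tfrac{\alpha_{k-1}\gamma_{k-1}^2}{1-4\varepsilon_{k-1}/\mu_d}$ using \eqref{minimax_outer_param_choice_11} with $k$ replaced by $k-1$, and note $p_{k-1}=\tfrac{\alpha_{k-1}\gamma_{k-1}^2+\gamma_{k-1}(1-\gamma_{k-1})\mu_p}{2(1-4\varepsilon_{k-1}/\mu_d)\Gamma_{k-1}}\ge\tfrac{\alpha_{k-1}\gamma_{k-1}^2}{2(1-4\varepsilon_{k-1}/\mu_d)\Gamma_{k-1}}$ since $\mu_p\ge 0$. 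The bound $q_k'\le q_{k-1}$ follows identically from \eqref{minimax_outer_param_choice_12}. Since $\norm{x^*-\bar x_k}^2\ge 0$ and $\norm{\tilde y_k^*-y_k}^2\ge 0$, summing the normalized recursion over indices $2,\dots,k$ telescopes everything, yielding $\tfrac{f(\tilde x_k)-f(x^*)}{\Gamma_k}+p_k\norm{x^*-\bar x_k}^2+q_k\norm{\tilde y_k^*-y_k}^2\le f(\tilde x_1)-f(x^*)+p_1\norm{x^*-\bar x_1}^2+q_1\norm{\tilde y_1^*-y_1}^2$, where $\Gamma_1=1$ was used on the first term.

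To finish, observe that this right-hand side is exactly the left-hand side of the normalized version of \eqref{minimax_outer_one_step_recursion} at $k=1$ (that inequality divided by $1-4\varepsilon_1/\mu_d$, using $\Gamma_1=1$), hence it is bounded above by $\tfrac{1-\gamma_1+4\varepsilon_1/\mu_d}{1-4\varepsilon_1/\mu_d}\sbr{f(\tilde x_0)-f(x^*)}+\tfrac{(\beta_1+\varepsilon_1)\gamma_1^2}{2(1-4\varepsilon_1/\mu_d)}\norm{x^*-\bar x_0}^2+\tfrac{\varepsilon_1}{1-4\varepsilon_1/\mu_d}\norm{\tilde y_0^*-y_0}^2$. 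Multiplying the resulting chain through by $\Gamma_k$, discarding the nonnegative term $\Gamma_k p_k\norm{x^*-\bar x_k}^2$, using $\Gamma_k q_k=\tfrac{\alpha_k}{2(1-4\varepsilon_k/\mu_d)}$, and recalling $\bar x_0=\tilde x_0$, gives the claimed inequality. The whole argument is essentially the bookkeeping behind Proposition~\ref{the:cat_main}; the only part that needs genuine care — and the one I expect to be the main obstacle — is verifying that hypotheses \eqref{minimax_outer_param_choice_11}--\eqref{minimax_outer_param_choice_12} are precisely the inequalities $p_k'\le p_{k-1}$ and $q_k'\le q_{k-1}$ after normalization, since the factors $1-4\varepsilon_k/\mu_d$ and $1-\gamma_k+4\varepsilon_k/\mu_d$ buried inside $\Gamma_k$ must be paired with the right indices.
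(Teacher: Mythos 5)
Your proposal is correct and follows essentially the same route as the paper: normalize \eqref{minimax_outer_one_step_recursion} by $(1-4\varepsilon_k/\mu_d)\Gamma_k$, use the $\Gamma$-identity to make the objective-gap terms telescope, verify via \eqref{minimax_outer_param_choice_11}--\eqref{minimax_outer_param_choice_12} that the normalized quadratic coefficients are monotone (your $p_k'\le p_{k-1}$, $q_k'\le q_{k-1}$ checks match the paper's two displayed coefficient inequalities), and drop the nonnegative $\mu_p$ and primal-distance terms at the end. The index bookkeeping you flag as the delicate point is handled correctly.
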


\begin{proof}
For notational convenience, let us write 
$\tilde{\Delta}_k = f(\tilde{x}_k) - f(x^*)$, 
$d_Y^k = \norm{ x^* -   \overline{x}_k}^2$, and
$d_Y^k = \norm{\tilde{y}_k^* - y_k}^2$.
Taking $\mu_p = 0$ in \eqref{minimax_outer_one_step_recursion}, and further dividing both sides by 
$(1 - \frac{4 \varepsilon_k}{\mu_d}) \Gamma_k$, we obtain 
\begin{align}
& \frac{1}{\Gamma_k} \tilde{\Delta}_k + \frac{\alpha_k \gamma_k^2}{2 (1 - {4 \varepsilon_k}/{\mu_d}) \Gamma_k } d_X^k + \frac{\alpha_k}{2 (1 - {4 \varepsilon_k}/{\mu_d}) \Gamma_k} d_Y^k  \nonumber  \\
\leq & 
\frac{1}{\Gamma_{k-1}} \tilde{\Delta}_{k-1} + \frac{(\beta_k + \varepsilon_k) \gamma_k^2}{2 (1 - {4 \varepsilon_k}/{\mu_d}) \Gamma_k} d^{k-1}_X + 
\frac{\varepsilon_k}{(1 - {4 \varepsilon_k}/{\mu_d}) \Gamma_k} d^{k-1}_Y ,~ k \geq 2,  \label{minimax_outer_telescope_1}
\end{align}
where the last inequality follows from the definition of $\cbr{\Gamma_k}$.
In addition, we also have 
\begin{align}
& \frac{1}{\Gamma_1} \tilde{\Delta}_1 + \frac{\alpha_1 \gamma_1^2}{2 (1 - {4 \varepsilon_1}/{\mu_d}) \Gamma_1 } d_X^1 + \frac{\alpha_1}{2 (1 - {4 \varepsilon_1}/{\mu_d}) \Gamma_1} d_Y^1 \nonumber  \\
\leq & 
\frac{1-\gamma_1 + 4\varepsilon_1 / \mu_d}{(1 - 4 \varepsilon_1/\mu_d)\Gamma_1}  \tilde{\Delta}_{0} + \frac{(\beta_1 + \varepsilon_1) \gamma_1^2}{2 (1 - {4 \varepsilon_1}/{\mu_d}) \Gamma_1} d^{0}_X + 
\frac{\varepsilon_1}{(1 - {4 \varepsilon_1}/{\mu_d}) \Gamma_1} d^{0}_Y . \label{minimax_outer_telescope_2}
\end{align}
In view of \eqref{minimax_outer_param_choice_11}, \eqref{minimax_outer_param_choice_12} and the definition of $\cbr{\Gamma_k}$, we have 
\begin{align*}
\frac{\alpha_k \gamma_k^2}{(1 - 4 \varepsilon_k / \mu_d) \Gamma_k} \geq \frac{(\beta_{k+1} + \varepsilon_{k+1}) \gamma_{k+1}^2}{(1 - 4 \varepsilon_{k+1} / \mu_d) \Gamma_{k+1}},
~
\frac{\alpha_k}{2(1 - 4 \varepsilon_k / \mu_d) \Gamma_k} 
\geq \frac{ \varepsilon_{k+1} }{(1- 4\varepsilon_{k+1} / \mu_d) \Gamma_{k+1}} .
\end{align*}
Taking the telescopic sum of \eqref{minimax_outer_telescope_1} and \eqref{minimax_outer_telescope_2}  yields 
\begin{align*}
\frac{1}{\Gamma_k} \tilde{\Delta}_k + \frac{\alpha_k}{2 (1-4 \varepsilon_k / \mu_d) \Gamma_k} d^k_Y
\leq 
\frac{1-\gamma_1 + 4\varepsilon_1 / \mu_d}{(1 - 4 \varepsilon_1/\mu_d)\Gamma_1}  \tilde{\Delta}_{0} + \frac{(\beta_1 + \varepsilon_1) \gamma_1^2}{2 (1 - {4 \varepsilon_1}/{\mu_d}) \Gamma_1} d^{0}_X + 
\frac{\varepsilon_1}{(1 - {4 \varepsilon_1}/{\mu_d}) \Gamma_1} d^{0}_Y.
\end{align*}
The desired claims follows immediately by noting that $z_0 = \tilde{z}_0 = \overline{z}_0$.
\end{proof}

We are now ready to establish the convergence of the catalyst scheme for convex-strongly-concave problems ($\mu_p = 0$).

\begin{lemma}\label{lemma_general_convergence_minimax_deterministic_nsc}
Suppose $\mu_p = 0$ for \eqref{def_problem}. Fix total iterations $K \geq 1$ a priori. 
Choose 
\begin{align*}
\gamma_k = \frac{2}{k+1},  
~ 
\beta_k =   \frac{ \mu_d (k+1)}{2(k+2)}.
\end{align*}
In addition, suppose $\alpha_k$ is chosen such that there exists  $\varepsilon_k$ certifying \eqref{eq:cp_inexact_sd} with 
\begin{align}\label{general_minimax_determinsitic_alpha_beta}
\alpha_k = \beta_k (1 + \varepsilon), ~ \varepsilon_k = \beta_k \varepsilon,  
\end{align}
for some 
\begin{align}\label{general_minimax_determinsitic_varepsilon}
\varepsilon \leq \min \cbr{\frac{1}{12}, \frac{1}{(K+1) (K+2)}, \frac{\norm{x^* - \tilde{x}_0}^2}{2 \sbr{f(\tilde{x}_0) - f(x^*) }}}. 
\end{align}
Then we have 
\begin{align*}
 {f(\tilde{x}_K) - f(x^*)} +  \frac{\mu_d}{6} \norm{\tilde{y}_K^* - y_K}^2 
\leq  
\frac{12}{K^2}  \sbr{   2 \mu_d \norm{x^* - \tilde{x}_0}^2   + 
\mu_d \norm{\tilde{y}_0^* - y_0}^2
}.
\end{align*}
\end{lemma}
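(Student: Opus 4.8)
The plan is to invoke Lemma~\ref{lemma_minimax_outer_recursion_raw} with the stated parameter choices, so the first task is to verify that its hypotheses \eqref{minimax_outer_param_choice_11} and \eqref{minimax_outer_param_choice_12} hold. With $\gamma_k = 2/(k+1)$ and $\beta_k = \mu_d(k+1)/(2(k+2))$, I would first record that $\varepsilon_k = \beta_k \varepsilon \le \mu_d \varepsilon / 2 \le \mu_d/24$, so $4\varepsilon_k/\mu_d \le 1/6$ and the denominators $1 - 4\varepsilon_k/\mu_d$ lie in $[5/6, 1]$; this keeps all the $\Gamma_k$-ratios comparable to the $\varepsilon = 0$ case up to absolute constants. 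Then I would compute $\alpha_k \gamma_k^2 = \beta_k(1+\varepsilon)\cdot 4/(k+1)^2 = 2\mu_d(1+\varepsilon)/((k+1)(k+2))$ and $(\beta_{k+1}+\varepsilon_{k+1})\gamma_{k+1}^2 = \beta_{k+1}(1+\varepsilon)\cdot 4/(k+2)^2 = 2\mu_d(1+\varepsilon)/((k+2)(k+3))$. Since $1 - \gamma_{k+1} = (k-1)/(k+1)$, checking \eqref{minimax_outer_param_choice_11} reduces to a telescoping-type inequality between rational functions of $k$ that is slack by a constant factor; similarly \eqref{minimax_outer_param_choice_12} reduces to $\varepsilon_{k+1}/\alpha_k \le $ (something bounded below by a constant), which holds because $\varepsilon_{k+1}/\alpha_k = \beta_{k+1}\varepsilon/(\beta_k(1+\varepsilon)) = O(\varepsilon)$ while the right side is $\Omega(1)$.

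Next I would bound $\Gamma_K$. From \eqref{def_Gamma_minimax_deterministic}, $\Gamma_k = \prod_{j=2}^k \frac{1 - \gamma_j + 4\varepsilon_j/\mu_d}{1 - 4\varepsilon_j/\mu_d}$. Writing $1 - \gamma_j = (j-1)/(j+1)$ and using $4\varepsilon_j/\mu_d \le 2\varepsilon \le 2/((K+1)(K+2))$, I would compare this product against the clean product $\prod_{j=2}^k (1-\gamma_j) = \prod_{j=2}^k \frac{j-1}{j+1} = \frac{2}{k(k+1)}$. The perturbation multiplies each factor by at most $1 + O(\varepsilon)$ in the numerator and $1/(1 - O(\varepsilon))$ in the denominator, so over $K$ factors the cumulative distortion is $\exp(O(K\varepsilon)) = O(1)$ by the choice $\varepsilon \le 1/((K+1)(K+2))$. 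Hence $\Gamma_K \le C/K^2$ for an absolute constant $C$ (the constant $12$ in the statement should absorb this).

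Then I would assemble the right-hand side of Lemma~\ref{lemma_minimax_outer_recursion_raw} at $k=K$, $x = x^*$. Since $\gamma_1 = 1$, the first term has coefficient $\frac{1 - \gamma_1 + 4\varepsilon_1/\mu_d}{1 - 4\varepsilon_1/\mu_d} = \frac{4\varepsilon_1/\mu_d}{1 - 4\varepsilon_1/\mu_d} \le O(\varepsilon)$ multiplying $f(\tilde x_0) - f(x^*)$; the last clause of \eqref{general_minimax_determinsitic_varepsilon}, namely $\varepsilon \le \norm{x^* - \tilde x_0}^2 / (2(f(\tilde x_0) - f(x^*)))$, is exactly what converts this term into something bounded by $O(\mu_d \norm{x^* - \tilde x_0}^2)$. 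The second term is $\frac{(\beta_1 + \varepsilon_1)\gamma_1^2}{2(1 - 4\varepsilon_1/\mu_d)}\norm{x^* - \tilde x_0}^2 = O(\mu_d)\norm{x^* - \tilde x_0}^2$ since $\beta_1 = \mu_d/3$, and the third term is $O(\mu_d)\norm{\tilde y_0^* - y_0}^2$. On the left-hand side, $1 - 4\varepsilon_K/\mu_d \ge 5/6$ gives $\frac{\alpha_K}{2(1 - 4\varepsilon_K/\mu_d)}\norm{\tilde y_K^* - y_K}^2 \ge \frac{\alpha_K}{2}\norm{\tilde y_K^* - y_K}^2 \ge \frac{\beta_K}{2}\norm{\tilde y_K^* - y_K}^2$, and $\beta_K = \mu_d(K+1)/(2(K+2)) \ge \mu_d/3$, so this dominates $\frac{\mu_d}{6}\norm{\tilde y_K^* - y_K}^2$. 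Multiplying the $\Gamma_K \le C/K^2$ bound through and collecting the absolute constants into $12$ yields the claim.

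The main obstacle I anticipate is the bookkeeping of absolute constants: one must verify that the three numerical bounds on $\varepsilon$ in \eqref{general_minimax_determinsitic_varepsilon} are jointly strong enough to make every ``$O(\varepsilon)$'' term genuinely negligible, to keep $\Gamma_K \le 12/K^2$ (not merely $O(1/K^2)$), and to ensure $1/12$ in particular forces $4\varepsilon_k/\mu_d$ small enough that the denominators never degenerate — this is the only place where the argument is not purely mechanical, since a slightly different constant would break the clean $12/K^2$ bound. Everything else is a routine substitution of the explicit sequences $\gamma_k, \beta_k, \alpha_k, \varepsilon_k$ into the already-proved recursion.
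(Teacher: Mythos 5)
Your overall architecture is exactly the paper's: verify the hypotheses of Lemma \ref{lemma_minimax_outer_recursion_raw}, bound $\Gamma_K \le 12/K^2$, and assemble the resulting right-hand side using the three clauses of \eqref{general_minimax_determinsitic_varepsilon}. Your handling of \eqref{minimax_outer_param_choice_12} (left side $O(\varepsilon)$, right side at least $\tfrac{k}{2(k+2)}\ge\tfrac16$), of the $\Gamma_K$ bound, and of the final bookkeeping — including the use of $\varepsilon \le \norm{x^*-\tilde x_0}^2/(2[f(\tilde x_0)-f(x^*)])$ to absorb the $\gamma_1=1$ term and the lower bound $\tfrac{\alpha_K}{2(1-4\varepsilon_K/\mu_d)}\ge\tfrac{\mu_d}{6}$ — all match the paper's proof and are sound.

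The gap is your claim that \eqref{minimax_outer_param_choice_11} is ``slack by a constant factor.'' Finish your own computation: with $\alpha_k\gamma_k^2=\tfrac{2\mu_d(1+\varepsilon)}{(k+1)(k+2)}$ and $(\beta_{k+1}+\varepsilon_{k+1})\gamma_{k+1}^2=\tfrac{2\mu_d(1+\varepsilon)}{(k+2)(k+3)}$, condition \eqref{minimax_outer_param_choice_11} reduces after cancellation to $\tfrac{1-\gamma_{k+1}+4\varepsilon_{k+1}/\mu_d}{1-4\varepsilon_k/\mu_d}\ge\tfrac{k+1}{k+3}$. At $\varepsilon=0$ this reads $\tfrac{k}{k+2}\ge\tfrac{k+1}{k+3}$, i.e.\ $k(k+3)\ge(k+1)(k+2)$, which is false for every $k\ge1$; for general $\varepsilon$ it is equivalent to $[(k+1)^2+(k+2)^2]\,\varepsilon\ge 1$, a \emph{lower} bound on $\varepsilon$ that is incompatible with the required upper bound $\varepsilon\le\tfrac{1}{(K+1)(K+2)}$ whenever $k<K$. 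So the inequality is violated (by the factor $\tfrac{(k+1)(k+2)}{k(k+3)}=1+\tfrac{2}{k(k+3)}$), not slack. To be fair, the paper's proof asserts the same verification without computation and has the identical defect; the conclusion can be salvaged because the per-step deficit has the convergent product $\prod_{k=1}^{K}\tfrac{(k+1)(k+2)}{k(k+3)}=\tfrac{3(K+1)}{K+3}\le 3$, so the telescoping inside Lemma \ref{lemma_minimax_outer_recursion_raw} can be repaired at the cost of an absolute constant folded into the $12/K^2$. But that repair means reopening the proof of Lemma \ref{lemma_minimax_outer_recursion_raw} (or adjusting $\beta_k$, e.g.\ inserting a factor $\tfrac{k+1}{k}$ as in the convex case \eqref{gamma_beta_choice_catalyst}), not the routine parameter substitution your proposal describes; as written, that step would fail if carried out.
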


\begin{proof}
Given the choice of $\varepsilon_k$, $\beta_k$ and $\varepsilon \leq \frac{1}{(K+1)(K+2)}$,  it can be readily verified that $\frac{4 \varepsilon_{k+1}}{\mu_d} \leq \frac{2}{(k+1)(k+2)}$ for $k \leq K$.
Combining this with the definition of $\gamma_k = \frac{2}{k+1}$,  it holds that
\begin{align}\label{deterministic_minimax_gamma_ratio_bound}
\frac{1 - \gamma_{k+1} + 4 \varepsilon_{k+1} / \mu_d}{1 - 4 \varepsilon_k / \mu_d} \in \sbr{ \frac{k}{k+2}, \frac{k+1}{k+3}}.
\end{align}
Since $\varepsilon_{k+1} \geq \varepsilon_k$ and $\gamma_k \geq \gamma_{k+1}$, the above relation implies
\begin{align*}
\frac{1 - \gamma_{k} + 4 \varepsilon_{k} / \mu_d}{1 - 4 \varepsilon_k / \mu_d} \leq \frac{k+1}{k+3},
\end{align*}
and consequently $\Gamma_k \leq \frac{12}{k^2}$.
It remains to verify that \eqref{minimax_outer_param_choice_11} and \eqref{minimax_outer_param_choice_12}  in Lemma \ref{lemma_minimax_outer_recursion_raw} hold.
In particular, \eqref{minimax_outer_param_choice_11} is a direct consequence of \eqref{deterministic_minimax_gamma_ratio_bound} and the choice of $\alpha_k = \beta_k (1 + \varepsilon)$, $\varepsilon_k = \beta_k \varepsilon$, $\gamma_k = \frac{2}{k+1}$,  and $\beta_k = \frac{\mu_d (k+1)}{2 (k+2)}$. 
Additionally, \eqref{minimax_outer_param_choice_12} follows from \eqref{deterministic_minimax_gamma_ratio_bound} and choice of $\alpha_k = \beta_k (1 + \varepsilon)$, $\beta_k = \frac{\mu_d (k+1)}{2 (k+2)}$,   $\varepsilon_k = \beta_k \varepsilon$, and $\varepsilon \leq 1/12$.

Combining the above observations, we can now invoke Lemma \ref{lemma_minimax_outer_recursion_raw} and obtain 
\begin{align}
& {f(\tilde{x}_K) - f(x^*)} + \frac{\mu_d}{12} \norm{\tilde{y}_K^* - y_K}^2 \nonumber \\
\leq & {f(\tilde{x}_K) - f(x^*)} + \frac{\alpha_K}{2 (1-4 \varepsilon_K / \mu_d) } \norm{\tilde{y}_K^* - y_K}^2 \nonumber \\
\leq  &
\frac{12}{K^2} \sbr{ \frac{1-\gamma_1 + 4\varepsilon_1 / \mu_d}{(1 - 4 \varepsilon_1/\mu_d)}  \sbr{f({x}_0) - f(x^*)}  + \frac{(\beta_1 + \varepsilon_1) \gamma_1^2}{2 (1 - {4 \varepsilon_1}/{\mu_d}) }  \norm{x - {x}_0}^2   + 
\frac{\varepsilon_1}{(1 - {4 \varepsilon_1}/{\mu_d}) } \norm{\tilde{y}_0^* - y_0}^2
},
\label{ineq_nonsc_minimax_deterministic_raw}
\end{align}
where the first inequality follows from 
\begin{align*}
\frac{\alpha_k}{2 (1 - 4 \varepsilon_k / \mu_d)} = \frac{\beta_k (1 + \varepsilon) }{2 (1 - 4 \beta_k \varepsilon / \mu_d)}   \geq \frac{13 \mu_d / 36}{2 (1 - 1/9)} \geq \frac{\mu_d}{6}.
\end{align*}
 Now simplifying \eqref{ineq_nonsc_minimax_deterministic_raw} after plugging in the choice of $(\gamma_1, \varepsilon_1, \beta_1)$ and applying $\varepsilon \leq 1/12$ again,
 we obtain 
 \begin{align}\label{ineq_deterministic_minimax_smooth_wo_cond}
 {f(\tilde{x}_K) - f(x^*)} +  \frac{\mu_d}{6} \norm{\tilde{y}_K^* - y_K}^2 
\leq  
\frac{12}{K^2}  \sbr{   2 \varepsilon  \sbr{f(\tilde{x}_0) - f(x^*)}  +  \mu_d \norm{x^* - \tilde{x}_0}^2   + 
\mu_d \norm{\tilde{y}_0^* - y_0}^2
}.
\end{align}
The desired claim then follows from the choice of $\varepsilon \leq \frac{\norm{x^* - \tilde{x}_0}^2}{2 \sbr{f(\tilde{x}_0) - f(x^*) }}$.
\end{proof}

It can be seen from \eqref{general_minimax_determinsitic_varepsilon} of  Lemma \ref{lemma_general_convergence_minimax_deterministic_nsc} that the convergence of the catalyst scheme requires the knowledge of  the unknown quantity ${\norm{x^* - \tilde{x}_0}^2}/{\rbr{f(\tilde{x}_0) - f(x^*) }}$.
As will be clarified in Theorem \ref{thrm_minimax_catalyst_deterministic_nsc}, 
when the proximal step \eqref{eq:cp_subproblem_minmax} is approximately solved by the REG method introduced in Section \ref{sec_eg}, 
one can simply use an underestimate of this quantity, and the total computational complexity of the catalyst scheme will only increase by a logarithmic factor as the price of potential underestimation. 

For strongly-convex-strongly-concave problems, we proceed to describe a simple restarting procedure applicable to the proposed catalyst scheme,
and establish its convergence properties.

\begin{algorithm}[H]
\caption{\texttt{R-Catalyst-Minimax}$(\cA)$: restarting catalyst for strongly-convex-strongly-concave problems}
\begin{algorithmic}
\STATE{ {\bf Input:} initial point $z_{(0)}$, to-be-catalyzed method $\cA$, total number of epochs $E$, epoch length $\cbr{K_e}$.}
\FOR{epoch $e = 1, 2, \ldots, E$}
\STATE{
Let $z_{(e)}$ be the output of  running $\texttt{Catalyst-Minimax} (\cA)$ starting from $z_{(e-1)}$ for  $K_e$ iterations. 
}
\ENDFOR
\end{algorithmic} \label{alg_restart_opt_catalyst_minimax}
\end{algorithm}

\begin{lemma}\label{lemma_deterministic_minimax_sc}
Suppose $\mu_p > 0$.
Within each epoch of the \texttt{R-Catalyst-Minimax} scheme, choose
\begin{align*}
K_e \equiv K \geq 12 \sqrt{\frac{\mu_d}{\mu_p}}, ~ 
\gamma_k = \frac{2}{k+1},  
~ 
\beta_k =   \frac{ \mu_d (k+1)}{2(k+2)},
\end{align*}
and suppose $\alpha_k$ is chosen such that there exists  $\varepsilon_k$ certifying \eqref{eq:cp_inexact_sd} with 
\begin{align*}
\alpha_k = \beta_k (1 + \varepsilon), ~ \varepsilon_k = \beta_k \varepsilon,  
\end{align*}
for some 
$
\varepsilon \leq \min \cbr{\frac{1}{12}, \frac{1}{(K+1) (K+2)}}.
$
Then
\begin{align*}
 {f(\tilde{x}_{(e)}) - f(x^*)} +  \frac{\mu_d}{6} \norm{\tilde{y}_{(e)}^* - y_{(e)}}^2 
 \leq 
 \rbr{\frac{1}{2} }^e
 \sbr{
 {f(\tilde{x}_{(0)}) - f(x^*)} +  \frac{\mu_d}{6} \norm{\tilde{y}_{(0)}^* - y_{(0)}}^2 
 }.
\end{align*}
\end{lemma}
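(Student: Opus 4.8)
The plan is an induction on the epoch index $e$ fed by the single-epoch estimate already available for convex-strongly-concave problems. Set $\Psi_e := f(\tilde{x}_{(e)}) - f(x^*) + \tfrac{\mu_d}{6}\|\tilde{y}_{(e)}^* - y_{(e)}\|^2$; since the case $e=0$ is an identity, it suffices to prove $\Psi_e \le \tfrac12\Psi_{e-1}$ for each $e\ge 1$. The parameters used inside each epoch are exactly those under which the intermediate bound \eqref{ineq_deterministic_minimax_smooth_wo_cond} was derived in the proof of Lemma~\ref{lemma_general_convergence_minimax_deterministic_nsc} (that derivation uses only $\varepsilon\le\min\{\tfrac{1}{12},\tfrac{1}{(K+1)(K+2)}\}$ and mere convexity of $f$, so it stays valid here even though $\mu_p>0$). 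I would first pin down the restart bookkeeping: in the $e$-th epoch \texttt{Catalyst-Minimax} starts from $\bar z_0=\tilde z_0=z_0=z_{(e-1)}$, so its ``initial'' iterates are $\tilde{x}_0=\tilde{x}_{(e-1)}$, $y_0=y_{(e-1)}$, and because $\Phi_1(\tilde x_0,\cdot)$ differs from $F(\tilde x_0,\cdot)$ only by a $y$-independent term, \eqref{eq:def_yk} gives $\tilde{y}_0^*=\argmax_{y\in Y}F(\tilde x_{(e-1)},y)=\tilde{y}_{(e-1)}^*$. Thus $\|\tilde y_0^*-y_0\|^2=\|\tilde y_{(e-1)}^*-y_{(e-1)}\|^2$: the dual-tracking term passes between epochs unchanged, $\Psi_e$ is the left side of \eqref{ineq_deterministic_minimax_smooth_wo_cond} for the $e$-th epoch, and its right side is governed by the epoch-$(e-1)$ output $(\tilde{x}_{(e-1)},y_{(e-1)})$ and the associated $\tilde{y}_{(e-1)}^*$.

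Next I would apply \eqref{ineq_deterministic_minimax_smooth_wo_cond} to the $e$-th epoch,
\begin{align*}
\Psi_e \;\le\; \frac{12}{K^2}\Bigl[\,2\varepsilon\,\bigl(f(\tilde{x}_{(e-1)})-f(x^*)\bigr)+\mu_d\|x^*-\tilde{x}_{(e-1)}\|^2+\mu_d\|\tilde{y}_{(e-1)}^*-y_{(e-1)}\|^2\,\Bigr],
\end{align*}
and bound each bracketed term by a component of $\Psi_{e-1}$: the first by $\tfrac16(f(\tilde{x}_{(e-1)})-f(x^*))$ since $\varepsilon\le\tfrac{1}{12}$; the second via the fact that $f(x)=\max_{y\in Y}F(x,y)$ is $\mu_p$-strongly convex, being a pointwise maximum of the $\mu_p$-strongly-convex functions $F(\cdot,y)$, so that optimality of $x^*$ yields $\tfrac{\mu_p}{2}\|x^*-\tilde{x}_{(e-1)}\|^2\le f(\tilde{x}_{(e-1)})-f(x^*)$; and the third is $6\cdot\tfrac{\mu_d}{6}\|\tilde{y}_{(e-1)}^*-y_{(e-1)}\|^2$. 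Collecting terms gives
\begin{align*}
\Psi_e \;\le\; \frac{12}{K^2}\Bigl[\Bigl(\tfrac16+\tfrac{2\mu_d}{\mu_p}\Bigr)\bigl(f(\tilde{x}_{(e-1)})-f(x^*)\bigr)+6\cdot\tfrac{\mu_d}{6}\|\tilde{y}_{(e-1)}^*-y_{(e-1)}\|^2\Bigr].
\end{align*}

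To conclude, I would plug in $K\ge 12\sqrt{\mu_d/\mu_p}$, i.e.\ $K^2\ge 144\,\mu_d/\mu_p\ge 144$ (using $\mu_d\ge\mu_p$): the coefficient of $f(\tilde{x}_{(e-1)})-f(x^*)$ is then at most $\tfrac{2}{K^2}+\tfrac{24\mu_d}{K^2\mu_p}\le\tfrac{1}{72}+\tfrac16<\tfrac12$, and that of $\tfrac{\mu_d}{6}\|\tilde{y}_{(e-1)}^*-y_{(e-1)}\|^2$ is at most $\tfrac{72}{K^2}\le\tfrac12$; hence $\Psi_e\le\tfrac12\Psi_{e-1}$, and the induction closes. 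I expect the only genuinely delicate point to be the restart bookkeeping — checking that the dual iterate carried between epochs is precisely the one entering both $\Psi_e$ and the starting term of the next epoch's estimate — alongside the routine but essential observation that $f$ inherits $\mu_p$-strong convexity from $F$; everything else is constant-chasing. If one prefers not to cite a display living inside another proof, \eqref{ineq_deterministic_minimax_smooth_wo_cond} can instead be re-derived directly from Lemma~\ref{lemma_minimax_outer_recursion_raw} (with $\mu_p$ set to $0$ in \eqref{minimax_outer_one_step_recursion}), exactly as in the proof of Lemma~\ref{lemma_general_convergence_minimax_deterministic_nsc}.
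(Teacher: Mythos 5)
Your proof is correct and follows essentially the same route as the paper's: apply the per-epoch bound \eqref{ineq_deterministic_minimax_smooth_wo_cond} from the proof of Lemma~\ref{lemma_general_convergence_minimax_deterministic_nsc}, convert $\|x^*-\tilde{x}_{(e-1)}\|^2$ into an optimality gap via the $\mu_p$-strong convexity of $f$, and use $K\geq 12\sqrt{\mu_d/\mu_p}$ to get the factor $1/2$ per epoch. Your explicit treatment of the restart bookkeeping (that $\tilde{y}_0^*$ in the new epoch coincides with $\tilde{y}_{(e-1)}^*$) is a detail the paper leaves implicit, but it does not change the argument.
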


\begin{proof}
Given the choice of parameters $\cbr{(\alpha_k, \beta_k, \gamma_k)}$ and $\varepsilon \leq \frac{1}{12}$, one can apply \eqref{ineq_deterministic_minimax_smooth_wo_cond} in Lemma \ref{lemma_general_convergence_minimax_deterministic_nsc} and obtain 
\begin{align*}
& {f(\tilde{x}_{(e+1)}) - f(x^*)} +  \frac{\mu_d}{6} \norm{\tilde{y}_{(e+1)}^* - y_{(e+1)}}^2  \\
\leq  &  
\frac{12}{K^2}  \sbr{   f(\tilde{x}_{(e)}) - f(x^*) +  \mu_d \norm{x^* - \tilde{x}_{(e)}}^2   + 
\mu_d \norm{\tilde{y}_{(e)}^* - y_{(e)}}^2
}  \\
\overset{(a)}{\leq} & 
\frac{36 \mu_d}{K^2 \mu_p} \sbr{ f(\tilde{x}_{(e)}) - f(x^*) }
+ \frac{12 \mu_d}{K^2} \norm{\tilde{y}_{(e)}^* - y_{(e)}}^2 \\
\overset{(b)}{\leq} & 
\frac{1}{2} \sbr{
 {f(\tilde{x}_{(e)}) - f(x^*)} +  \frac{\mu_d}{6} \norm{\tilde{y}_{(e)}^* - y_{(e)}}^2
},
\end{align*}
where $(a)$ follows from $f(\cdot)$ being $\mu_p$ strongly-convex, 
and $(b)$ follows from the choice of $K$.
Applying the above relation recursively yields the desired claim.
\end{proof}

Until now, the established convergence of the proposed \texttt{Catalyst-Minimax} scheme hinges upon the approximate proximal step \eqref{eq:cp_inexact_sd} to satisfy certain error conditions introduced in Lemma \ref{lemma_general_convergence_minimax_deterministic_nsc} and \ref{lemma_deterministic_minimax_sc}.
In the following, we show that such error conditions can be naturally satisfied when the proximal step \eqref{eq:def_Phi_sd} is solved by the REG method introduced in Section \ref{sec_eg} with proper parameter specifications. 
Consequently, we are able to establish the total iteration complexity of the proposed minimax catalyst scheme when catalyzing the REG method. 
As before, we first proceed to the case where $\mu_p = 0$.

\begin{theorem}\label{thrm_minimax_catalyst_deterministic_nsc}
Suppose $\mu_p = 0$. 
For any $\epsilon > 0$, run \texttt{Catalyst-Minimax}(\texttt{REG}) with 
\begin{align*}
K \geq \sqrt{\frac{4 \sbr{   2 \mu_d \norm{x^* - \tilde{x}_0}^2   + 
\mu_d \norm{\tilde{y}_0^* - y_0}^2
}}{\epsilon}}, ~
\gamma_k = \frac{2}{k+1},  
~ 
\beta_k =   \frac{ \mu_d (k+1)}{2(k+2)},
~
\alpha_k =  \frac{ \beta_k \Lambda_T}{\Lambda_T - \Lambda_0},
\end{align*}
where 
\begin{align*}
T \geq \frac{6 L \log(12) }{\mu_d} + \frac{6 L \log (6K^2)  }{\mu_d} +   \frac{6 L \log (2 \sbr{f(\tilde{x}_0) - f(x^*)} / \norm{x^* - \tilde{x}_0}^2)  }{\mu_d} , ~ \Lambda_t = (1 + \frac{\mu_d}{6 L })^t.
\end{align*}
At the $k$-th  iteration of the catalyst scheme, set $(\tilde{z}_k, z_k)$ by the output of \texttt{REG}($\Phi_k$), initialized at $(\hat{x}_k, y_{k-1})$ and running for a total of $T$ steps  with   
$
\eta_t =  \frac{(k+2)}{3(k+1) L} .
$
Then we obtain 
\begin{align*}
 {f(\tilde{x}_K) - f(x^*)} +  \frac{\mu_d}{6} \norm{\tilde{y}_K^* - y_K}^2  \leq \epsilon.
\end{align*}
\end{theorem}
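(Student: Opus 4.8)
The plan is to show that a single run of \texttt{REG}$(\Phi_k)$ for $T$ steps with the prescribed stepsize certifies the inexactness condition \eqref{eq:cp_inexact_sd} with exactly the parameters required by Lemma~\ref{lemma_general_convergence_minimax_deterministic_nsc}, and then to extract the claimed bound from that lemma together with the stated choice of $K$. First I would record the structure of the proximal subproblem: with $\mu_p = 0$, the monotone operator $G_k(z) = [\nabla_x\Phi_k(z);\,-\nabla_y\Phi_k(z)]$ associated with \eqref{eq:cp_subproblem_minmax} is strongly monotone with modulus $\min\{\beta_k,\mu_d\}=\beta_k$ (since $\beta_k\le\mu_d/2<\mu_d$) and Lipschitz with modulus $L+\beta_k$, so \texttt{REG} should be run on $\Phi_k$ with its internal strong-monotonicity parameter set to $\beta_k$. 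The stepsize requirement $\eta_t\le 1/(L+\beta_k)$ of Lemma~\ref{lemma_eg_deterministic} then holds because $\frac{k+2}{k+1}\le\frac32$ and $\mu_d\le L$ give $\eta_t=\frac{k+2}{3(k+1)L}\le\frac{1}{2L}<\frac{1}{L+\beta_k}$; moreover, with this constant stepsize the weights \eqref{def_lambda_minimax} collapse to $\Lambda_t=(1+\beta_k\eta_t)^t=(1+\frac{\mu_d}{6L})^t$, and crucially $\beta_k\eta_t\equiv\frac{\mu_d}{6L}$ is independent of $k$, hence so is $\Lambda_T$.

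Next, invoking Lemma~\ref{lemma_eg_deterministic} on $\Phi_k$ with initial point $z_0=z_k^0=(\hat x_k,y_{k-1})$ as prescribed in \eqref{eq:define_init_point}, and arbitrary $(\tilde x,\tilde y)\in Z$, gives for the output $(\tilde z_k,z_k)=(\overline{z}_T,z_T)$ the inequality
\begin{align*}
\Phi_k(\tilde x_k,\tilde y) - \Phi_k(\tilde x,\tilde y_k) + \frac{\beta_k\Lambda_T}{2(\Lambda_T-\Lambda_0)}\|\tilde z - z_k\|^2 \le \frac{\beta_k\Lambda_0}{2(\Lambda_T-\Lambda_0)}\|\tilde z - z_k^0\|^2,
\end{align*}
which is exactly \eqref{eq:cp_inexact_sd} with $\alpha_k=\frac{\beta_k\Lambda_T}{\Lambda_T-\Lambda_0}$ and $\varepsilon_k=\frac{\beta_k\Lambda_0}{\Lambda_T-\Lambda_0}$; writing $\varepsilon:=\frac{\Lambda_0}{\Lambda_T-\Lambda_0}=\frac{1}{\Lambda_T-1}$ recovers $\alpha_k=\beta_k(1+\varepsilon)$ and $\varepsilon_k=\beta_k\varepsilon$ exactly as in \eqref{general_minimax_determinsitic_alpha_beta}. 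It then remains to verify that this common $\varepsilon$ obeys the three-way bound \eqref{general_minimax_determinsitic_varepsilon}: since $\Lambda_T=(1+\frac{\mu_d}{6L})^T$ grows exponentially in $T$ at rate $\Theta(\mu_d/L)$, the stated choice of $T$ --- a sum of terms proportional to $\frac{L}{\mu_d}\log 12$, $\frac{L}{\mu_d}\log(6K^2)$, and $\frac{L}{\mu_d}\log\big(2(f(\tilde x_0)-f(x^*))/\|x^*-\tilde x_0\|^2\big)$ --- forces $\Lambda_T-1$ to exceed each of $12$, $(K+1)(K+2)$ (here one uses $6K^2\ge(K+1)(K+2)$ for $K\ge1$), and $2(f(\tilde x_0)-f(x^*))/\|x^*-\tilde x_0\|^2$, i.e. $\varepsilon\le\min\{\frac{1}{12},\frac{1}{(K+1)(K+2)},\frac{\|x^*-\tilde x_0\|^2}{2(f(\tilde x_0)-f(x^*))}\}$.

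With all hypotheses of Lemma~\ref{lemma_general_convergence_minimax_deterministic_nsc} now in force, that lemma yields $f(\tilde x_K)-f(x^*)+\frac{\mu_d}{6}\|\tilde y_K^*-y_K\|^2\le\frac{12}{K^2}\big[2\mu_d\|x^*-\tilde x_0\|^2+\mu_d\|\tilde y_0^*-y_0\|^2\big]$, and the stated choice $K\ge\sqrt{4(2\mu_d\|x^*-\tilde x_0\|^2+\mu_d\|\tilde y_0^*-y_0\|^2)/\epsilon}$ (after absorbing the numerical constant) bounds the right-hand side by $\epsilon$, which is the claim. I expect the main obstacle to be the bookkeeping in the middle paragraph: matching \texttt{REG}'s guarantee to \eqref{eq:cp_inexact_sd} verbatim, keeping track that the prox-subproblem's effective strong-monotonicity modulus is $\beta_k$ rather than $\mu_d$ (so that $\beta_k\eta_t$ comes out as the advertised $\mu_d/(6L)$ and $\Lambda_T$ is $k$-free), and correctly using the initial point $(\hat x_k,y_{k-1})$ --- a primal prox-center but the previous dual iterate. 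The conversion of the lower bound on $T$ into \eqref{general_minimax_determinsitic_varepsilon} is routine, but is exactly where the three logarithmic terms of $T$ are consumed, one per entry of the minimum.
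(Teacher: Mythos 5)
Your proposal is correct and follows essentially the same route as the paper's proof: identify the subproblem $\Phi_k$ as $\beta_k$-strongly-convex-strongly-concave and $O(L)$-smooth, apply Lemma~\ref{lemma_eg_deterministic} with initial point $(\hat x_k, y_{k-1})$ to certify \eqref{eq:cp_inexact_sd} with $\alpha_k=\beta_k(1+\varepsilon)$, $\varepsilon_k=\beta_k\varepsilon$, $\varepsilon=1/(\Lambda_T-1)$, use the three logarithmic terms in $T$ to enforce \eqref{general_minimax_determinsitic_varepsilon}, and conclude via Lemma~\ref{lemma_general_convergence_minimax_deterministic_nsc}. Your added bookkeeping (the $k$-independence of $\beta_k\eta_t=\mu_d/(6L)$, the stepsize check, and the note that the constant $4$ versus $12$ in the choice of $K$ must be absorbed) only makes explicit what the paper leaves implicit.
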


\begin{proof}
Given the choice of $\beta_k$, it can be readily seen that the subproblem \eqref{eq:def_Phi_sd} solved by \texttt{REG} is $\beta_k$ strongly-convex-strongly-concave, and $2L$ smooth. 
Combining this observation with the choice of $\cbr{\eta_t}$, 
one can apply Lemma \ref{lemma_eg_deterministic} and obtain that  \eqref{general_minimax_determinsitic_alpha_beta} is satisfied with
$\varepsilon = \frac{1}{\Lambda_T - 1}$ and $\Lambda_t = (1 + \frac{\mu_d}{6L})^t$.
It remains to make proper choice of $T$ so that  \eqref{general_minimax_determinsitic_varepsilon} holds. 
From \eqref{ineq_eg_deterministic_general} Lemma \ref{lemma_eg_deterministic} this can be readily satisfied by taking 
\begin{align*}
T \geq  \frac{6 L \log(12) }{\mu_d} + \frac{6 L \log (6K^2)  }{\mu_d} +  \frac{6 L \log (2 \sbr{f(\tilde{x}_0) - f(x^*)} / \norm{x^* - \tilde{x}_0}^2)  }{\mu_d}.
\end{align*}
Consequently, one can then invoke Lemma \ref{lemma_general_convergence_minimax_deterministic_nsc} and obtain 
\begin{align*}
 {f(\tilde{x}_K) - f(x^*)} +  \frac{\mu_d}{6} \norm{\tilde{y}_K^* - y_K}^2 
\leq  
\frac{12}{K^2}  \sbr{  2  \mu_d \norm{x^* - \tilde{x}_0}^2   + 
\mu_d \norm{\tilde{y}_0^* - y_0}^2
} \leq \epsilon,
\end{align*}
where the last inequality follows from the choice of $K$.
\end{proof}

Similarly, we proceed to instantiate the \texttt{R-Catalyst-Minimax} scheme with the proposed REG method, and establish its total iteration complexity.

\begin{theorem}\label{thrm_minimax_deterministic_iter_complexity}
Suppose $\mu_p > 0$. 
Run \texttt{R-Catalyst-Minimax}(\texttt{REG}) with epoch length $K \geq 12 \sqrt{\frac{\mu_d}{\mu_p}}$.
Within each epoch, choose 
\begin{align*}
\gamma_k = \frac{2}{k+1},  
~ 
\beta_k =   \frac{ \mu_d (k+1)}{2(k+2)},
~
\alpha_k =  \frac{ \beta_k \Lambda_T}{\Lambda_T - \Lambda_0},
\end{align*}
where 
\begin{align*}
T \geq \frac{6 L \log(12) }{\mu_d} + \frac{6 L \log (6K^2)  }{\mu_d}  , ~ \Lambda_t = (1 + \frac{\mu_d}{6 L })^t.
\end{align*}
At the $k$-th  iteration of the $e$-th epoch, set $(\tilde{z}_k, z_k)$ by the output of \texttt{REG}($\Phi_k$), initialized at $(\hat{x}_k, y_{k-1})$ and running for a total of $T$ steps  with   
$
\eta_t =  \frac{(k+2)}{3(k+1) L} .
$
Then
\begin{align*}
 {f(\tilde{x}_{(e)}) - f(x^*)} +  \frac{\mu_d}{6} \norm{\tilde{y}_{(e)}^* - y_{(e)}}^2 
 \leq 
 \rbr{\frac{1}{2} }^e
 \sbr{
 {f(\tilde{x}_{(0)}) - f(x^*)} +  \frac{\mu_d}{6} \norm{\tilde{y}_{(0)}^* - y_{(0)}}^2 
 }.
\end{align*}
\end{theorem}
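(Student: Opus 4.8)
The plan is to obtain the theorem as a direct composition of two earlier results: Lemma~\ref{lemma_deterministic_minimax_sc}, which already delivers the claimed $\tfrac12$-contraction per epoch \emph{provided} the approximate proximal update \eqref{eq:cp_inexact_sd} holds with the parameter pattern $\alpha_k=\beta_k(1+\varepsilon)$, $\varepsilon_k=\beta_k\varepsilon$ and $\varepsilon\le\min\{\tfrac1{12},\tfrac1{(K+1)(K+2)}\}$, and Lemma~\ref{lemma_eg_deterministic}, which will be used to certify exactly that condition. First I would record the relevant structure of the subproblem: since $0<\beta_k=\tfrac{\mu_d(k+1)}{2(k+2)}\le\tfrac{\mu_d}{2}\le\mu_d\le L$, the function $\Phi_k$ in \eqref{eq:def_Phi_sd} is $\beta_k$ strongly-convex-strongly-concave and $2L$-smooth, and the prescribed stepsize $\eta_t=\tfrac{(k+2)}{3(k+1)L}$ satisfies $\eta_t\le\tfrac1{2L}$ for every $k\ge1$ (equivalently $2(k+2)\le3(k+1)$), which is the stepsize requirement of \texttt{REG} on a $2L$-smooth problem.

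Next I would apply Lemma~\ref{lemma_eg_deterministic} to \texttt{REG}$(\Phi_k)$, with the role of $\mu$ there played by $\beta_k$. The key computation is the $k$-independent identity $\beta_k\eta_t=\tfrac{\mu_d(k+1)}{2(k+2)}\cdot\tfrac{(k+2)}{3(k+1)L}=\tfrac{\mu_d}{6L}$, so that the weight sequence of \texttt{REG} becomes $\Lambda_t=(1+\beta_k\eta_t)^t=(1+\tfrac{\mu_d}{6L})^t$, precisely as declared, with $\Lambda_0=1$. Writing the \texttt{REG} output as $(\tilde z_k,z_k)=(\overline z_T,z_T)$ and recalling that it is initialized at $z_k^0=z_0=(\hat x_k,y_{k-1})$ as demanded in \eqref{eq:define_init_point}, Lemma~\ref{lemma_eg_deterministic} gives, for every $\tilde z=(\tilde x,\tilde y)\in Z$,
\[
\Phi_k(\tilde x_k,\tilde y)-\Phi_k(\tilde x,\tilde y_k)+\tfrac{\beta_k\Lambda_T}{2(\Lambda_T-\Lambda_0)}\|\tilde z-z_k\|^2\le\tfrac{\beta_k\Lambda_0}{2(\Lambda_T-\Lambda_0)}\|\tilde z-z_k^0\|^2,
\]
which is exactly \eqref{eq:cp_inexact_sd} with $\alpha_k=\tfrac{\beta_k\Lambda_T}{\Lambda_T-\Lambda_0}$ (matching the stated choice) and $\varepsilon_k=\tfrac{\beta_k\Lambda_0}{\Lambda_T-\Lambda_0}=\beta_k\varepsilon$ for $\varepsilon=\tfrac{\Lambda_0}{\Lambda_T-\Lambda_0}=\tfrac1{\Lambda_T-1}$; the identity $\alpha_k=\beta_k(1+\varepsilon)$ then follows by a one-line rearrangement. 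Hence the parameter pattern assumed by Lemma~\ref{lemma_deterministic_minimax_sc} is in force.

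It then only remains to verify $\varepsilon=\tfrac1{\Lambda_T-1}\le\min\{\tfrac1{12},\tfrac1{(K+1)(K+2)}\}$. Since $\mu_d\ge\mu_p$ we have $K\ge12\sqrt{\mu_d/\mu_p}\ge12$, so the binding threshold is $\Lambda_T-1\ge(K+1)(K+2)$; using $(K+1)(K+2)\le6K^2$ together with a routine lower bound on $(1+\tfrac{\mu_d}{6L})^T$ (for instance $\log(1+a)\ge a/(1+a)$ combined with $\mu_d\le L$, the additional $\log12$ summand in the stated bound for $T$ taking care of the $\tfrac1{12}$ threshold), the prescribed choice $T\ge\tfrac{6L\log12}{\mu_d}+\tfrac{6L\log(6K^2)}{\mu_d}$ is seen to suffice. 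With \eqref{eq:cp_inexact_sd} certified, I would invoke Lemma~\ref{lemma_deterministic_minimax_sc} verbatim to conclude the asserted $\tfrac12$-contraction of the potential $f(\tilde x)-f(x^*)+\tfrac{\mu_d}{6}\|\tilde y^*-y\|^2$ per epoch, which is the claim. The only real obstacle is bookkeeping---translating \texttt{REG}'s guarantee into the precise shape of \eqref{eq:cp_inexact_sd} with the correct identification of $(\tilde z_k,z_k,z_k^0)$ and of the underlying strong-monotonicity constant $\beta_k$, and noticing that $\Lambda_t$ is independent of $k$---after which the theorem is immediate.
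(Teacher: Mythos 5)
Your proposal is correct and follows exactly the paper's route: the paper's proof is a one-line reference to the argument of Theorem~\ref{thrm_minimax_catalyst_deterministic_nsc} (certify \eqref{eq:cp_inexact_sd} via Lemma~\ref{lemma_eg_deterministic} applied to the $\beta_k$-strongly-monotone, $2L$-smooth subproblem, note $\beta_k\eta_t=\mu_d/(6L)$ so $\Lambda_T$ is $k$-independent, and check $\varepsilon=1/(\Lambda_T-1)$ meets the required threshold) with Lemma~\ref{lemma_deterministic_minimax_sc} substituted for Lemma~\ref{lemma_general_convergence_minimax_deterministic_nsc}. You in fact supply more of the bookkeeping than the paper does.
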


\begin{proof}
The proof follows similar lines as the proof of Theorem \ref{thrm_minimax_catalyst_deterministic_nsc}, except that we will use Lemma \ref{lemma_deterministic_minimax_sc} in place of Lemma \ref{lemma_general_convergence_minimax_deterministic_nsc}.
\end{proof}

Clearly, in view of Theorem \ref{thrm_minimax_catalyst_deterministic_nsc} and \ref{thrm_minimax_deterministic_iter_complexity}, the total iteration complexity of the proposed catalyst scheme for \eqref{def_problem} is bounded by 
$\tilde{\cO} (L/\sqrt{\mu_d \epsilon})$ (resp. $\tilde{\cO} ( L /\sqrt{\mu_d \mu_p} \log(1/\epsilon))$) 
for convex-strongly-concave (resp. strongly-convex-strongly-concave) problems. 
Notably, the established complexities here are optimal up to potential logarithmic factors. 
In the next section, we proceed to establish that the proposed catalyst scheme, when catalyzing the SREG method introduced in Section \ref{sec_eg},  is indeed capable of simultaneously achieving optimal iteration and sample complexities in the stochastic regime up to logarithmic factors.


\section{Stochastic  Minimax Catalyst Scheme}\label{sec_minimax_stoch}
The minimax catalyst scheme for the stochastic setting shares the same update as its deterministic counterpart presented in Algorithm \ref{alg_catalyst_deterministic}.
Compared to the deterministic setting in Section \ref{sec_minimax_deterministic}, our discussion in the stochastic setting requires a more refined characterization on approximately solving the proximal step \eqref{eq:cp_subproblem_minmax}.
In particular, 
we assume that the to-be-catalyzed method  can find a pair of solution $(\tilde z_k, z_k) \in X \times X$ of \eqref{eq:cp_subproblem_minmax} such that
\begin{align} \label{eq_stoch_minimax_err_condition_catalyst}
\EE \sbr{ \Phi_k(\tilde x_k, \tilde{y}) - \Phi_k(\tilde x, \tilde y_k) + \tfrac{\alpha_k}{2}  \norm{\tilde{x} - x_k}^2 + \tfrac{\alpha_k}{2}  \norm{\tilde{y} - y_k}^2 } 
\leq \EE \sbr{ \tfrac{\varepsilon_k'}{2} \|\tilde x - x_k^0 \|^2 
+ \frac{\varepsilon_k}{2} \norm{\tilde{y} - y_k^0}^2
 } + \delta_k,
\end{align}
for any potentially  random $\tilde{x}$ that is measurable with respect to the filtration generated up to iteration $k-1$, 
and random $\tilde{y}$ measurable with respect to the filtration generated up to  iteration $k$. 
Notably, $\delta_k$ here encapsulates the stochastic error when solving \eqref{eq:cp_subproblem_minmax}, and $z_k^0 \equiv (x_k^0, y_k^0)$ denotes the initial point of the subroutine for solving \eqref{eq:cp_subproblem_minmax}.
As before, we choose $x_k^0 = \hat x_k$ and $y_k^0=y_{k-1}$.

In view of  \eqref{eq_stoch_minimax_err_condition_catalyst},  
  the only difference in characterizing the approximate proximal step in the stochastic setting, compared to its deterministic counterpart \eqref{eq:cp_inexact_sd},  is the different error terms associated with the primal and dual variables.
 This is due to the fact that the catalyst scheme essentially requires a rather precise dual solution in solving the proximal step \eqref{eq:cp_subproblem_minmax}, as the dual reference point $\tilde{y}$ in  \eqref{eq_stoch_minimax_err_condition_catalyst}  can be chosen after the proximal step is solved. 
Consequently, we need to take a more refined treatment on the dual variables compared to the primal variables in solving \eqref{eq:cp_subproblem_minmax}.

In Section \ref{subsec_smeg_in_catalyst}, we will introduce a variant of the SREG method introduced in Section \ref{sec_eg} that can certify condition \eqref{eq_stoch_minimax_err_condition_catalyst} with proper parameter specifications,
and subsequently determine the sample complexity of the catalyst scheme when catalyzing the SREG method. 
Before that, we proceed to establish some generic convergence properties of the minimax catalyst scheme provided error condition \eqref{eq_stoch_minimax_err_condition_catalyst} holds.

\begin{lemma}\label{lemma_stoch_minimax_one_step}
Suppose \eqref{eq_stoch_minimax_err_condition_catalyst} holds. Then within Algorithm~\ref{alg:basic_cat_sd},
for any $k \ge 1$, we have
\begin{align}
&\rbr{1-\tfrac{4 \varepsilon_k}{\mu_d}} \EE \sbr{ f(\tilde x_k) - f(x^*)}
+ \tfrac{\alpha_k \gamma_k^2 + \gamma_k(1-\gamma_k) \mu_p}{2} \EE \sbr{ \norm{x^* -  \bar x_k}^2} +  \tfrac{\alpha_k}{2}  \EE \sbr{\norm{\tilde y_k^* - y_k}}^2 \nonumber  \\
\leq & \rbr{1-\gamma_k + \tfrac{4 \varepsilon_k}{\mu_d}} \EE \sbr{ f(\tilde x_{k-1}) - f(x^*)} +
 \tfrac{(\beta_k + \varepsilon_k') \gamma_k^2}{2} \EE \sbr{\norm{x^* - \bar x_{k-1}  }^2} + \varepsilon_k \EE \sbr{\norm{\tilde y_{k-1}^* - y_{k-1}}^2} + \delta_k  . \label{minimax_outer_one_step_recursion_stochastic}
\end{align}
\end{lemma}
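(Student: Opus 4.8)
The plan is to mirror the proof of Lemma~\ref{lemma_recursion_minimax_catalyst_one_step_deterministic} essentially verbatim, carrying expectations throughout and being careful about which quantities are measurable with respect to which filtration. First I would invoke the stochastic error condition \eqref{eq_stoch_minimax_err_condition_catalyst} with the prescribed initialization $x_k^0 = \hat x_k$, $y_k^0 = y_{k-1}$, substitute the definition of $\Phi_k$ in \eqref{eq:def_Phi_sd}, and use the elementary bound $F(\tilde x, \tilde y_k) \le f(\tilde x)$ valid for any $\tilde x \in X$, to arrive at
\begin{align*}
\EE\sbr{ f(\tilde x_k) - f(\tilde x) + \tfrac{\alpha_k}{2}\norm{\tilde x - x_k}^2 + \tfrac{\alpha_k}{2}\norm{\tilde y - y_k}^2 } \le \EE\sbr{ \tfrac{\beta_k + \varepsilon_k'}{2}\norm{\tilde x - \hat x_k}^2 + \tfrac{\varepsilon_k}{2}\norm{\tilde y - y_{k-1}}^2 } + \delta_k.
\end{align*}
The crucial observation is that $\tilde y_k^* = \argmax_{y \in Y}\Phi_k(\tilde x_k, y)$ is a deterministic function of $\tilde x_k$, hence measurable with respect to the filtration generated up to iteration $k$, so \eqref{eq_stoch_minimax_err_condition_catalyst} legitimately permits the substitution $\tilde y = \tilde y_k^*$; likewise $\tilde x = \gamma_k x^* + (1-\gamma_k)\tilde x_{k-1}$ is measurable up to iteration $k-1$, as required.

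Next I would control the dual term $\norm{\tilde y_k^* - y_{k-1}}^2$ exactly as in the deterministic proof: two applications of the triangle inequality give $\norm{\tilde y_k^* - y_{k-1}}^2 \le 4\norm{\tilde y_k^* - y^*}^2 + 4\norm{\tilde y_{k-1}^* - y^*}^2 + 2\norm{\tilde y_{k-1}^* - y_{k-1}}^2$, and Lemma~\ref{lemma_dual_dist_to_primal} applied at indices $k$ and $k-1$ converts the first two terms into $\tfrac{8}{\mu_d}[f(\tilde x_k) - f(x^*) + f(\tilde x_{k-1}) - f(x^*)]$, after which everything is taken in expectation. Substituting this into the previous display and moving the resulting $\tfrac{4\varepsilon_k}{\mu_d}\EE[f(\tilde x_k) - f(x^*)]$ term to the left-hand side produces the factor $\bigl(1 - \tfrac{4\varepsilon_k}{\mu_d}\bigr)$ and contributes the terms $\tfrac{4\varepsilon_k}{\mu_d}\EE[f(\tilde x_{k-1}) - f(x^*)]$ and $\varepsilon_k\,\EE\norm{\tilde y_{k-1}^* - y_{k-1}}^2$ to the right-hand side; the additive $\delta_k$ passes through unchanged.

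Finally I would set $\tilde x = \gamma_k x^* + (1-\gamma_k)\tilde x_{k-1}$, invoke the $\mu_p$-strong convexity \eqref{eq:phi_convexity} of $f$ to split $f(\tilde x)$ and pick up the $\tfrac{\mu_p\gamma_k(1-\gamma_k)}{2}\norm{x^* - \tilde x_{k-1}}^2$ term, substitute the definition of $\hat x_k$ from \eqref{eq:define_hat_x_sd}, and then reuse the algebraic identity \eqref{min_aggregate_x} to aggregate $\tfrac{\alpha_k}{2}\norm{\gamma_k x^* + (1-\gamma_k)\tilde x_{k-1} - x_k}^2 + \tfrac{\mu_p\gamma_k(1-\gamma_k)}{2}\norm{x^* - \tilde x_{k-1}}^2$ into $\tfrac{\alpha_k\gamma_k^2 + \gamma_k(1-\gamma_k)\mu_p}{2}\norm{x^* - \bar x_k}^2$ through the definition \eqref{eq:define_bar_x_sd} of $\bar x_k$. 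Collecting terms yields \eqref{minimax_outer_one_step_recursion_stochastic}. The only genuinely new point relative to the deterministic argument is the measurability bookkeeping — verifying that each reference point fed into \eqref{eq_stoch_minimax_err_condition_catalyst} satisfies the stated measurability requirement and that the tower property composes correctly when the dual-distance bound, which itself contains $f(\tilde x_k)$ inside an expectation, is inserted; I expect this to be the main (though still minor) obstacle, with the remaining manipulations being the same routine algebra as in the deterministic case.
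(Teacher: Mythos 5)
Your proposal is correct and follows exactly the route the paper takes: the paper's own proof of this lemma is a one-line remark that one repeats the argument of Lemma~\ref{lemma_recursion_minimax_catalyst_one_step_deterministic} with \eqref{eq:cp_inexact_sd} replaced by \eqref{eq_stoch_minimax_err_condition_catalyst}, which is precisely what you carry out, including the correct placement of $\varepsilon_k'$ versus $\varepsilon_k$ and the measurability check that justifies taking $\tilde y = \tilde y_k^*$ a posteriori.
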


\begin{proof}
The proof follows the same line as Lemma \ref{lemma_recursion_minimax_catalyst_one_step_deterministic}, by replacing \eqref{eq:cp_inexact_sd} therein by \eqref{eq_stoch_minimax_err_condition_catalyst}.
\end{proof}

The following lemma provides a basic characterization on each step of the catalyst scheme in the stochastic setting.

\begin{lemma}\label{lemma_minimax_outer_recursion_raw_stochastic}
Suppose \eqref{eq_stoch_minimax_err_condition_catalyst} holds,
and
\begin{align}
\frac{\alpha_k \gamma_k^2}{1 - 4 \varepsilon_k / \mu_d}
& \geq \frac{(\beta_{k+1} + \varepsilon_{k+1}') \gamma_{k+1}^2 }{1 - \gamma_{k+1} + 4 \varepsilon_{k+1} / \mu_d}, \label{minimax_outer_param_choice_1_stoch_1} \\
~~
 \frac{\varepsilon_{k+1}}{\alpha_k}  & \leq \frac{1 - \gamma_{k+1} + 4 \varepsilon_{k+1} / \mu_d}{2 (1 - 4 \varepsilon_k / \mu_d)}.
\label{minimax_outer_param_choice_1_stoch_2}
\end{align}
Then with $\cbr{\Gamma_k}$ defined as in \eqref{def_Gamma_minimax_deterministic}, we have 
\begin{align*}
& \EE \sbr{ {f(\tilde{x}_k) - f(x^*)} + \frac{\alpha_k}{2 (1-4 \varepsilon_k / \mu_d) } \norm{\tilde{y}_k^* - y_k}^2 } \\
\leq  &
\Gamma_k \big[ \frac{1-\gamma_1 + 4\varepsilon_1 / \mu_d}{(1 - 4 \varepsilon_1/\mu_d)}  \sbr{f({x}_0) - f(x^*)}  + \frac{(\beta_1 + \varepsilon_1') \gamma_1^2}{2 (1 - {4 \varepsilon_1}/{\mu_d}) }  \norm{x - {x}_0}^2   + 
\frac{\varepsilon_1}{(1 - {4 \varepsilon_1}/{\mu_d}) } \norm{\tilde{y}_0^* - y_0}^2 
  +
\tsum_{l=1}^k \frac{\delta_l}{(1-4 \varepsilon_l / \mu_d) \Gamma_l }
\big].
\end{align*}
\end{lemma}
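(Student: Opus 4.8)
The plan is to follow the proof of Lemma~\ref{lemma_minimax_outer_recursion_raw} almost verbatim, now carrying expectations throughout and tracking the extra stochastic error terms $\delta_l$. Write $\tilde\Delta_k = f(\tilde x_k) - f(x^*)$, $d_X^k = \norm{x^* - \bar x_k}^2$, and $d_Y^k = \norm{\tilde y_k^* - y_k}^2$. Starting from the one-step recursion~\eqref{minimax_outer_one_step_recursion_stochastic} of Lemma~\ref{lemma_stoch_minimax_one_step}, I would discard the nonnegative term $\tfrac{\gamma_k(1-\gamma_k)\mu_p}{2}\EE d_X^k$ from the left-hand side and divide both sides by $(1-4\varepsilon_k/\mu_d)\Gamma_k$. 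Using the recursion~\eqref{def_Gamma_minimax_deterministic}, which gives $\tfrac{1-\gamma_k+4\varepsilon_k/\mu_d}{(1-4\varepsilon_k/\mu_d)\Gamma_k} = \tfrac{1}{\Gamma_{k-1}}$ for $k \geq 2$, this produces, for $k \geq 2$,
\begin{align*}
& \tfrac{1}{\Gamma_k}\EE\tilde\Delta_k + \tfrac{\alpha_k\gamma_k^2}{2(1-4\varepsilon_k/\mu_d)\Gamma_k}\EE d_X^k + \tfrac{\alpha_k}{2(1-4\varepsilon_k/\mu_d)\Gamma_k}\EE d_Y^k \\
\leq & \tfrac{1}{\Gamma_{k-1}}\EE\tilde\Delta_{k-1} + \tfrac{(\beta_k+\varepsilon_k')\gamma_k^2}{2(1-4\varepsilon_k/\mu_d)\Gamma_k}\EE d_X^{k-1} + \tfrac{\varepsilon_k}{(1-4\varepsilon_k/\mu_d)\Gamma_k}\EE d_Y^{k-1} + \tfrac{\delta_k}{(1-4\varepsilon_k/\mu_d)\Gamma_k},
\end{align*}
and for $k=1$ the analogous inequality with leading term $\tfrac{1-\gamma_1+4\varepsilon_1/\mu_d}{(1-4\varepsilon_1/\mu_d)\Gamma_1}\EE\tilde\Delta_0$ on the right (obtained by dividing~\eqref{minimax_outer_one_step_recursion_stochastic} directly by $(1-4\varepsilon_1/\mu_d)\Gamma_1$, using $\Gamma_1=1$).

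I would then note, exactly as in the deterministic proof, that hypotheses~\eqref{minimax_outer_param_choice_1_stoch_1}--\eqref{minimax_outer_param_choice_1_stoch_2} together with the definition of $\Gamma_k$ yield the coefficient-domination inequalities
\begin{align*}
\tfrac{\alpha_k\gamma_k^2}{(1-4\varepsilon_k/\mu_d)\Gamma_k} \geq \tfrac{(\beta_{k+1}+\varepsilon_{k+1}')\gamma_{k+1}^2}{(1-4\varepsilon_{k+1}/\mu_d)\Gamma_{k+1}}, \qquad \tfrac{\alpha_k}{2(1-4\varepsilon_k/\mu_d)\Gamma_k} \geq \tfrac{\varepsilon_{k+1}}{(1-4\varepsilon_{k+1}/\mu_d)\Gamma_{k+1}}.
\end{align*}
Consequently, when the above inequalities are summed over indices $1,\dots,k$ (after taking total expectation), the coefficients multiplying $d_X^{l-1}$ and $d_Y^{l-1}$ on the right-hand side of the $l$-th inequality are bounded by those multiplying $d_X^{l-1}$ and $d_Y^{l-1}$ on the left-hand side of the $(l-1)$-th inequality; these terms cancel telescopically, as do the $\tilde\Delta_{l-1}$ terms, leaving only the boundary terms at $l=1$, the surviving left-hand quantities $\tfrac{1}{\Gamma_k}\EE\tilde\Delta_k + \tfrac{\alpha_k}{2(1-4\varepsilon_k/\mu_d)\Gamma_k}\EE d_Y^k$ (the $d_X^k$ term being nonnegative and hence dropped), and the accumulated errors $\tsum_{l=1}^k\tfrac{\delta_l}{(1-4\varepsilon_l/\mu_d)\Gamma_l}$. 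Multiplying through by $\Gamma_k$ and recalling $z_0 = \tilde z_0 = \bar z_0$, so that $d_X^0 = \norm{x - x_0}^2$ and $\tilde\Delta_0 = f(x_0) - f(x^*)$, gives precisely the claimed bound.

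I do not anticipate any genuine obstacle: the argument is a direct stochastic adaptation of Lemma~\ref{lemma_minimax_outer_recursion_raw}. The only points needing care are (i) the use of the separate primal error parameter $\varepsilon_k'$ in the $d_X$ terms, which is exactly why~\eqref{minimax_outer_param_choice_1_stoch_1} is phrased with $\varepsilon_{k+1}'$ rather than $\varepsilon_{k+1}$, so that the telescoping of the primal distances still closes; and (ii) propagating the $\delta_l$ terms correctly through the division by $(1-4\varepsilon_l/\mu_d)\Gamma_l$. Both are handled automatically by the bookkeeping above, and the measurability requirements in~\eqref{eq_stoch_minimax_err_condition_catalyst} have already been accounted for in establishing~\eqref{minimax_outer_one_step_recursion_stochastic}.
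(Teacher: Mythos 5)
Your proposal is correct and is exactly the argument the paper intends: the paper's own proof of this lemma is a one-line reference saying it "follows the same line as Lemma \ref{lemma_minimax_outer_recursion_raw}, by using Lemma \ref{lemma_stoch_minimax_one_step} in the place of Lemma \ref{lemma_recursion_minimax_catalyst_one_step_deterministic}," and your bookkeeping (dividing by $(1-4\varepsilon_k/\mu_d)\Gamma_k$, verifying the coefficient-domination inequalities from \eqref{minimax_outer_param_choice_1_stoch_1}--\eqref{minimax_outer_param_choice_1_stoch_2}, telescoping, and accumulating the $\delta_l/((1-4\varepsilon_l/\mu_d)\Gamma_l)$ terms) spells out precisely that adaptation.
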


\begin{proof}
The proof follows  the same line as Lemma \ref{lemma_minimax_outer_recursion_raw}, by using Lemma \ref{lemma_stoch_minimax_one_step} in the place of Lemma \ref{lemma_recursion_minimax_catalyst_one_step_deterministic}.
\end{proof}

We proceed to establish the convergence of the catalyst scheme with some concrete parameter choice and realization of \eqref{eq_stoch_minimax_err_condition_catalyst}. 
As before, we first restrict our attention to convex-strongly-concave problems ($\mu_p = 0$).

\begin{lemma}\label{lemma_general_convergence_minimax_stoch_nsc}
Suppose $\mu_p = 0$. 
Fix total iterations $K \geq 1$ a priori. 
Choose 
\begin{align*}
\gamma_k = \frac{2}{k+1},  
~ 
\beta_k =   \frac{ \mu_d (k+1)}{4(k+2)}.
\end{align*}
In addition, suppose $\alpha_k$ is chosen such that there exists  $\varepsilon_k$ certifying \eqref{eq_stoch_minimax_err_condition_catalyst} with 
\begin{align}
\alpha_k = \beta_k (1 + \varepsilon'), ~ \varepsilon_k' & = \beta_k \varepsilon', ~ \varepsilon_k = \beta_k \varepsilon,  \label{general_minimax_stoch_alpha_beta} \\
 \delta_k & \leq \delta , 
  \label{general_minimax_stoch_noise_condition}
\end{align}
for some $\delta > 0$ and
\begin{align}\label{general_minimax_stoch_varepsilon}
\varepsilon \leq \min \cbr{\frac{1}{12}, \frac{1}{(K+1) (K+2)}, \frac{\norm{x^* - \tilde{x}_0}^2}{2 \sbr{f(\tilde{x}_0) - f(x^*) }}}, ~
\varepsilon' \leq 1.
\end{align}
Then \texttt{Catalyst-Minimax} scheme (Algorithm \ref{alg_catalyst_deterministic}) satisfies  
\begin{align*}
\EE \sbr {f(\tilde{x}_K) - f(x^*)  +  \frac{\mu_d}{12} \norm{\tilde{y}_K^* - y_K}^2 }
\leq  
\frac{12}{K^2}  \sbr{   2 \mu_d \norm{x^* - \tilde{x}_0}^2   + 
\mu_d \norm{\tilde{y}_0^* - y_0}^2
}
+ 64 K \delta
.
\end{align*}
\end{lemma}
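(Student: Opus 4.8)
The plan is to run the deterministic argument of Lemma~\ref{lemma_general_convergence_minimax_deterministic_nsc} step for step, with Lemma~\ref{lemma_minimax_outer_recursion_raw_stochastic} in the role of Lemma~\ref{lemma_minimax_outer_recursion_raw}, and then add one new estimate that controls the aggregated stochastic error. First I would do the parameter bookkeeping: with $\gamma_k=\tfrac{2}{k+1}$, $\beta_k=\tfrac{\mu_d(k+1)}{4(k+2)}$, $\varepsilon_k=\beta_k\varepsilon$, $\varepsilon_k'=\beta_k\varepsilon'$ and $\alpha_k=\beta_k(1+\varepsilon')$, the hypothesis $\varepsilon\le\tfrac{1}{(K+1)(K+2)}$ forces $\tfrac{4\varepsilon_{k+1}}{\mu_d}\le\tfrac{2}{(k+1)(k+2)}$ for $k\le K$, so that $\tfrac{1-\gamma_{k+1}+4\varepsilon_{k+1}/\mu_d}{1-4\varepsilon_k/\mu_d}$ lies in a short interval just above $\tfrac{k}{k+2}$, exactly as in \eqref{deterministic_minimax_gamma_ratio_bound}. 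From this I obtain, as in the deterministic case, the two parameter inequalities \eqref{minimax_outer_param_choice_1_stoch_1}--\eqref{minimax_outer_param_choice_1_stoch_2} (here using $\varepsilon'\le1$ so that $\beta_{k+1}+\varepsilon_{k+1}'=\beta_{k+1}(1+\varepsilon')$ pairs with $\alpha_k=\beta_k(1+\varepsilon')$ the same way as before, and $\varepsilon\le\tfrac{1}{12}$ so that $1-\tfrac{4\varepsilon_k}{\mu_d}\ge\tfrac{11}{12}$), and hence the right to invoke Lemma~\ref{lemma_minimax_outer_recursion_raw_stochastic}.

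The genuinely new ingredient --- and the step I expect to be the main obstacle --- is bounding $\Gamma_K\tsum_{l=1}^K\tfrac{\delta_l}{(1-4\varepsilon_l/\mu_d)\Gamma_l}$ by $64K\delta$, which requires a \emph{two-sided} control of the contraction sequence $\{\Gamma_k\}$. The upper estimate $\Gamma_k\le\tfrac{12}{(k+2)(k+3)}\le\tfrac{12}{k^2}$ comes from the upper end of the ratio interval, just as in Lemma~\ref{lemma_general_convergence_minimax_deterministic_nsc}; the new lower estimate $\Gamma_k\ge\tfrac{2}{k(k+1)}$ comes from the trivial bound $\tfrac{1-\gamma_j+4\varepsilon_j/\mu_d}{1-4\varepsilon_j/\mu_d}\ge 1-\gamma_j=\tfrac{j-1}{j+1}$ together with the telescoping identity $\prod_{j=2}^k\tfrac{j-1}{j+1}=\tfrac{2}{k(k+1)}$. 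Combining $\delta_l\le\delta$, $\tfrac{1}{1-4\varepsilon_l/\mu_d}\le\tfrac{12}{11}$, $\tfrac{1}{\Gamma_l}\le\tfrac{l(l+1)}{2}$ and $\tsum_{l=1}^K\tfrac{l(l+1)}{2}=\tfrac{K(K+1)(K+2)}{6}$ then gives
\begin{align*}
\Gamma_K\tsum_{l=1}^K\frac{\delta_l}{(1-4\varepsilon_l/\mu_d)\Gamma_l}
\le\frac{12}{(K+2)(K+3)}\cdot\frac{12\delta}{11}\cdot\frac{K(K+1)(K+2)}{6}\le 64K\delta,
\end{align*}
so the only real care is in arranging the constants; structurally, the $\Theta(1/k^2)$ two-sided bound on $\Gamma_k$ is exactly what turns the cubic sum $\tsum_l l^2$ into a term linear in $K$.

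Finally I would assemble exactly as in the deterministic proof. Plugging the parameter values into the bracket in Lemma~\ref{lemma_minimax_outer_recursion_raw_stochastic}, using $z_0=\tilde z_0=\bar z_0$, $\gamma_1=1$, $\beta_1=\tfrac{\mu_d}{6}$, $\varepsilon_1=\beta_1\varepsilon$ and $\varepsilon\le\tfrac{1}{12}$, the deterministic part of the bracket is at most $2\varepsilon\sbr{f(\tilde x_0)-f(x^*)}+\mu_d\norm{x^*-\tilde x_0}^2+\mu_d\norm{\tilde y_0^*-y_0}^2$; the hypothesis $\varepsilon\le\tfrac{\norm{x^*-\tilde x_0}^2}{2[f(\tilde x_0)-f(x^*)]}$ then absorbs the first summand, leaving at most $2\mu_d\norm{x^*-\tilde x_0}^2+\mu_d\norm{\tilde y_0^*-y_0}^2$. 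Multiplying by $\Gamma_K\le\tfrac{12}{K^2}$ and adding the $64K\delta$ noise contribution produces the right-hand side of the claim. On the left, since $\tfrac{K+1}{K+2}\ge\tfrac{2}{3}$ and $\varepsilon'\ge0$ one has $\tfrac{\alpha_K}{2(1-4\varepsilon_K/\mu_d)}\ge\tfrac{\beta_K}{2}\ge\tfrac{\mu_d}{12}$, which is exactly the coefficient multiplying $\norm{\tilde y_K^*-y_K}^2$ in the statement, so weakening the $y$-coefficient finishes the proof; all remaining manipulations coincide with the deterministic argument, with expectations carried through and the filtration measurability of $\tilde x,\tilde y$ in \eqref{eq_stoch_minimax_err_condition_catalyst} used precisely where \eqref{eq:cp_inexact_sd} was used in Lemma~\ref{lemma_recursion_minimax_catalyst_one_step_deterministic}.
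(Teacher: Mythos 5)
Your proposal is correct and follows essentially the same route as the paper's proof: the same parameter bookkeeping via the ratio interval, the same two-sided bound $\tfrac{2}{k(k+1)}\le\Gamma_k\le\tfrac{12}{k^2}$ to turn the aggregated noise into $64K\delta$, and the same final assembly using $\gamma_1=1$ and the bound $\tfrac{\alpha_K}{2(1-4\varepsilon_K/\mu_d)}\ge\tfrac{\mu_d}{12}$. Only the intermediate constants differ slightly (you use $1-4\varepsilon_l/\mu_d\ge\tfrac{11}{12}$ where the paper uses $\tfrac12$), which is immaterial.
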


\begin{proof}
Given the choice of $\varepsilon_k$, $\beta_k$ and $\varepsilon \leq \frac{1}{(K+1)(K+2)}$,  it can be readily verified that $\frac{4 \varepsilon_{k+1}}{\mu_d} \leq \frac{2}{(k+1)(k+2)}$ for $k \leq K$.
Combining this with the definition of $\gamma_k = \frac{2}{k+1}$,  it holds that
\begin{align}\label{stoch_minimax_gamma_ratio_bound}
\frac{1 - \gamma_{k+1} + 4 \varepsilon_{k+1} / \mu_d}{1 - 4 \varepsilon_k / \mu_d} \in \sbr{ \frac{k}{k+2}, \frac{k+1}{k+3}}.
\end{align}
Since $\varepsilon_{k+1} \geq \varepsilon_k$ and $\gamma_k \geq \gamma_{k+1}$, the above relation implies
\begin{align*}
\frac{1 - \gamma_{k} + 4 \varepsilon_{k} / \mu_d}{1 - 4 \varepsilon_k / \mu_d} \leq \frac{k+1}{k+3}, ~ k \geq 1;
~
\frac{1 - \gamma_{k} + 4 \varepsilon_{k} / \mu_d}{1 - 4 \varepsilon_k / \mu_d} \geq \frac{k-1}{k+1}, ~ k \geq 2,
\end{align*}
and consequently $\Gamma_k \leq \frac{12}{k^2}$,
and $\Gamma_k \geq \frac{2}{k (k+1)}$.

It remains to verify that \eqref{minimax_outer_param_choice_1_stoch_1} and \eqref{minimax_outer_param_choice_1_stoch_2}  in Lemma \ref{lemma_minimax_outer_recursion_raw_stochastic} hold.
In particular, \eqref{minimax_outer_param_choice_1_stoch_1} is a direct consequence of \eqref{stoch_minimax_gamma_ratio_bound} and the choice of $\alpha_k = \beta_k (1 + \varepsilon')$, $\varepsilon_k' = \beta_k \varepsilon'$, $\gamma_k = \frac{2}{k+1}$, and $\beta_k = \frac{\mu_d (k+1)}{4 (k+2)}$. 
Additionally, \eqref{minimax_outer_param_choice_1_stoch_2} follows from \eqref{stoch_minimax_gamma_ratio_bound} and the choice of $\alpha_k = \beta_k (1 + \varepsilon')$, $\beta_k = \frac{\mu_d (k+1)}{4 (k+2)}$,   $\varepsilon_k = \beta_k \varepsilon$, together with $\varepsilon \leq \frac{1}{12}$.

Combining the above observations, we can now invoke Lemma \ref{lemma_minimax_outer_recursion_raw_stochastic} and obtain 
\begin{align}
&\EE  \sbr{f(\tilde{x}_K) - f(x^*)  + \frac{\mu_d}{12} \norm{\tilde{y}_K^* - y_K}^2 } \nonumber \\
\overset{(a)}{\leq} & \EE \sbr{f(\tilde{x}_K) - f(x^*) + \frac{\alpha_K}{2 (1-4 \varepsilon_K / \mu_d) } \norm{\tilde{y}_K^* - y_K}^2} \nonumber \\
\leq  &
\frac{12}{K^2} \big[ \frac{1-\gamma_1 + 4\varepsilon_1 / \mu_d}{(1 - 4 \varepsilon_1/\mu_d)}  \sbr{f({x}_0) - f(x^*)}  + \frac{(\beta_1 + \varepsilon_1') \gamma_1^2}{2 (1 - {4 \varepsilon_1}/{\mu_d}) }  \norm{x - {x}_0}^2   + 
\frac{\varepsilon_1}{(1 - {4 \varepsilon_1}/{\mu_d}) } \norm{\tilde{y}_0^* - y_0}^2 \nonumber \\
& ~~~ 
 + 
 \tsum_{l=1}^K \frac{\delta}{(1-4 \varepsilon_l / \mu_d) \Gamma_l }
\big] \nonumber \\
\overset{(b)}{\leq} & \frac{12}{K^2} \big[ \frac{1-\gamma_1 + 4\varepsilon_1 / \mu_d}{(1 - 4 \varepsilon_1/\mu_d)}  \sbr{f({x}_0) - f(x^*)}  + \frac{(\beta_1 + \varepsilon_1') \gamma_1^2}{2 (1 - {4 \varepsilon_1}/{\mu_d}) }  \norm{x - {x}_0}^2   + 
\frac{\varepsilon_1}{(1 - {4 \varepsilon_1}/{\mu_d}) } \norm{\tilde{y}_0^* - y_0}^2 \big]
+ 64 K \delta, 
\label{ineq_nonsc_stoch_deterministic_raw}
\end{align}
where $(a)$ follows from the choice of $\beta_k$ and $\varepsilon \leq \frac{1}{12}$, which subsequently implies 
\begin{align*}
\frac{\alpha_k}{2 (1 - 4 \varepsilon_k / \mu_d)} = \frac{\beta_k (1 + \varepsilon') }{2 (1 - 4 \beta_k \varepsilon / \mu_d)} \geq \frac{\mu_d}{12}.
\end{align*}
In addition, $(b)$ follows from $\Gamma_l \geq \frac{2}{k (k+1)}$ and $1 - \frac{4 \varepsilon_l }{\mu_d} \geq \frac{1}{2}$, which in turn yields 
\begin{align*}
\frac{12}{K^2} \tsum_{l=1}^K \frac{\delta}{(1-4 \varepsilon_l / \mu_d) \Gamma_l } \leq 64 K \delta. 
\end{align*}
 Now simplifying \eqref{ineq_nonsc_stoch_deterministic_raw} after plugging in the choice of $(\gamma_1, \varepsilon_1, \varepsilon_1', \beta_1)$ and applying $\varepsilon \leq \frac{1}{12}$, $\varepsilon' \leq 1$,  
 we obtain 
 \begin{align}\label{ineq_stoch_minimax_smooth_wo_cond}
&  \EE \sbr{f(\tilde{x}_K) - f(x^*) +  \frac{\mu_d}{12} \norm{\tilde{y}_K^* - y_K}^2 } \nonumber \\
\leq &   
\frac{12}{K^2}  \sbr{   2 \varepsilon  \sbr{f(\tilde{x}_0) - f(x^*)}  +  \mu_d \norm{x^* - \tilde{x}_0}^2   + 
\mu_d \norm{\tilde{y}_0^* - y_0}^2
}
+ 64 K\delta
.
\end{align}
The desired claim then follows from the choice of $\varepsilon \leq \frac{\norm{x^* - \tilde{x}_0}^2}{2 \sbr{f(\tilde{x}_0) - f(x^*) }}$.
\end{proof}

With Lemma \ref{lemma_general_convergence_minimax_stoch_nsc} in place, we can now establish the convergence of the \texttt{R-Catalyst-Minimax} scheme (Algorithm \ref{alg_restart_opt_catalyst_minimax}) in the stochastic setting.

\begin{lemma}\label{lemma_general_convergence_minimax_stoch_sc}
Suppose $\mu_p > 0$. Within each epoch of the \texttt{R-Catalyst-Minimax} scheme, choose
\begin{align*}
K_e \equiv K = 24 \sqrt{\frac{\mu_d}{\mu_p}}, ~
\gamma_k = \frac{2}{k+1},  
~ 
\beta_k =   \frac{ \mu_d (k+1)}{4(k+2)}.
\end{align*}
In addition, suppose $\alpha_k$ is chosen such that there exists  $\varepsilon_k$ certifying \eqref{eq_stoch_minimax_err_condition_catalyst} with 
\begin{align}
\alpha_k & = \beta_k (1 + \varepsilon'), ~ \varepsilon_k'  = \beta_k \varepsilon', ~ \varepsilon_k = \beta_k \varepsilon,  \label{general_minimax_stoch_alpha_beta_sc} \\
 \delta_k & \leq \delta  \coloneqq \frac{1}{128 K} \rbr{\frac{1}{2}}^e \sbr{
{f(\tilde{x}_{(0)}) - f(x^*)} +  \frac{\mu_d}{12} \norm{\tilde{y}_{(0)}^* - y_{(0)}}^2
}
  \label{general_minimax_stoch_noise_condition_sc}
\end{align}
for some $\varepsilon, \varepsilon' > 0$ satisfying 
\begin{align}\label{general_minimax_stoch_varepsilon_2}
\varepsilon \leq \min \cbr{\frac{1}{12}, \frac{1}{(K+1) (K+2)} }, ~
\varepsilon' \leq 1.
\end{align}
Then for any $e \geq 0$, we have 
\begin{align*}
 \EE \sbr{f(\tilde{x}_{(e)}) - f(x^*) +  \frac{\mu_d}{12} \norm{\tilde{y}_{(e)}^* - y_{(e)}}^2 }  
 \leq 
 \rbr{\frac{1}{2}}^e
  \EE \sbr{f(\tilde{x}_{(0)}) - f(x^*) +  \frac{\mu_d}{12} \norm{\tilde{y}_{(0)}^* - y_{(0)}}^2 }  .
\end{align*}
\end{lemma}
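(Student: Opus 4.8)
The plan is to run the same induction on the epoch index $e$ as in the deterministic analog Lemma~\ref{lemma_deterministic_minimax_sc}, now dragging along the stochastic error terms. Introduce the Lyapunov quantity
\begin{align*}
V_e := f(\tilde x_{(e)}) - f(x^*) + \tfrac{\mu_d}{12}\,\|\tilde y_{(e)}^* - y_{(e)}\|^2 ,
\end{align*}
so that the claim becomes $\EE[V_e] \le 2^{-e}\,\EE[V_0]$, with the base case $e=0$ immediate. Note that $\tfrac{\mu_d}{12}\|\tilde y_{(e)}^* - y_{(e)}\|^2$ is exactly the dual-initialization error that is fed, through \eqref{eq:define_init_point}, into the next epoch, which is why it has to be part of $V_e$ rather than just the primal gap.

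For the inductive step I would look at the $(e+1)$-st epoch, which runs \texttt{Catalyst-Minimax}$(\cA)$ from $z_{(e)}$ for $K$ iterations with the parameters \eqref{general_minimax_stoch_alpha_beta_sc}--\eqref{general_minimax_stoch_varepsilon_2}. These are precisely the hypotheses of Lemma~\ref{lemma_general_convergence_minimax_stoch_nsc} except for its third, unknown-ratio constraint on $\varepsilon$; to sidestep that I would not invoke the clean conclusion of that lemma but rather its intermediate bound \eqref{ineq_stoch_minimax_smooth_wo_cond}, whose derivation uses only $\varepsilon \le \min\{1/12, 1/((K+1)(K+2))\}$ and $\varepsilon' \le 1$. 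Applied to this epoch (with $\tilde x_0, y_0$ the primal/dual components of $z_{(e)}$) it yields
\begin{align*}
\EE[V_{e+1}] \le \tfrac{12}{K^2}\,\EE\!\sbr{ 2\varepsilon\,(f(\tilde x_{(e)})-f(x^*)) + \mu_d\|x^*-\tilde x_{(e)}\|^2 + \mu_d\|\tilde y_{(e)}^*-y_{(e)}\|^2 } + 64K\delta .
\end{align*}
Then, exactly as in Lemma~\ref{lemma_deterministic_minimax_sc}, I would bound $2\varepsilon \le 1$, use $\mu_p$-strong convexity of $f$ (so $\tfrac{\mu_p}{2}\|x^*-\tilde x_{(e)}\|^2 \le f(\tilde x_{(e)})-f(x^*)$) together with $\mu_d \ge \mu_p$ to get
\begin{align*}
\EE[V_{e+1}] \le \tfrac{36\mu_d}{K^2\mu_p}\,\EE[f(\tilde x_{(e)})-f(x^*)] + \tfrac{144}{K^2}\cdot\tfrac{\mu_d}{12}\,\EE\|\tilde y_{(e)}^*-y_{(e)}\|^2 + 64K\delta ,
\end{align*}
so that with $K = 24\sqrt{\mu_d/\mu_p}$ both leading coefficients are at most $1/4$, i.e.\ $\EE[V_{e+1}] \le \tfrac14\EE[V_e] + 64K\delta$; substituting the geometrically decaying budget $\delta \le \tfrac{1}{128K}\,2^{-e}\,\EE[V_0]$ from \eqref{general_minimax_stoch_noise_condition_sc} and the induction hypothesis $\EE[V_e] \le 2^{-e}\EE[V_0]$ then closes the recursion at level $e+1$.

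I expect the only real obstacle to be the constant bookkeeping in this final step. One must exploit the geometric decay of $\delta_k$ across epochs (a single uniform bound is not enough), and the crude ``$64K\delta$'' appearing in \eqref{ineq_stoch_minimax_smooth_wo_cond} should be handled with a little care: the accumulated noise is really $\tfrac{12}{K^2}\tsum_{l}\delta/((1-4\varepsilon_l/\mu_d)\Gamma_l)$, which, using $\Gamma_l \ge 2/(l(l+1))$ and $1-4\varepsilon_l/\mu_d \ge 1/2$ from the proof of Lemma~\ref{lemma_general_convergence_minimax_stoch_nsc}, is $O(K\delta)$ with a constant small enough that the $\tfrac14$-contraction plus noise telescopes down to $2^{-e}\EE[V_0]$. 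Apart from this accounting there is no conceptual difficulty beyond Lemma~\ref{lemma_general_convergence_minimax_stoch_nsc} itself.
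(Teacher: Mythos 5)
Your proposal is correct and follows essentially the same route as the paper's proof: induction on $e$ with the Lyapunov quantity $V_e$, invoking the intermediate bound \eqref{ineq_stoch_minimax_smooth_wo_cond} (rather than the full conclusion of Lemma~\ref{lemma_general_convergence_minimax_stoch_nsc}, thereby avoiding its unknown-ratio condition on $\varepsilon$), then strong convexity of $f$, $\mu_d\ge\mu_p$, and the choice of $K$ to obtain the $\tfrac14$-contraction plus the $64K\delta$ noise term. The only care needed is the one you already flag: the budget in \eqref{general_minimax_stoch_noise_condition_sc} for the $(e+1)$-st epoch is $\tfrac{1}{128K}2^{-(e+1)}\EE[V_0]$, which makes $\tfrac14\cdot 2^{-e}\EE[V_0]+64K\delta = 2^{-(e+1)}\EE[V_0]$ close exactly.
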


\begin{proof}
We proceed with an inductive argument. 
The desired claim holds trivially at $e = 0$. 
In addition, clearly \eqref{ineq_stoch_minimax_smooth_wo_cond} still holds within each epoch. 
Given the definition of $\cbr{z_{(e)}}$, this implies 
\begin{align*}
 & \EE \sbr{f(\tilde{x}_{(e+1)}) - f(x^*) +  \frac{\mu_d}{12} \norm{\tilde{y}_{(e+1)}^* - y_{(e+1)}}^2 }  \\
\leq  & 
\frac{12}{K^2}  \EE \sbr{   2 \varepsilon  \sbr{f(\tilde{x}_{(e)}) - f(x^*)}  +  \mu_d \norm{x^* - \tilde{x}_{(e)}}^2   + 
\mu_d \norm{\tilde{y}_{(e)}^* - y_{(e)}}^2
}
+ 64 K\delta \\
\overset{(a)}{\leq}  & 
\frac{12}{K^2}  \EE \sbr{   3 \frac{\mu_d}{\mu_p}  \sbr{f(\tilde{x}_{(e)}) - f(x^*)}    + 
\mu_d \norm{\tilde{y}_{(e)}^* - y_{(e)}}^2
}
+ 64 K\delta \\
\overset{(b)}{\leq} & 
\frac{1}{4}  \EE \sbr{
{f(\tilde{x}_{(e)}) - f(x^*)} +  \frac{\mu_d}{12} \norm{\tilde{y}_{(e)}^* - y_{(e)}}^2
}
+ 64 K \delta \\
\overset{(c)}{\leq} &
\rbr{\frac{1}{2}}^{(e+1)} \sbr{
{f(\tilde{x}_{(0)}) - f(x^*)} +  \frac{\mu_d}{12} \norm{\tilde{y}_{(0)}^* - y_{(0)}}^2
},
\end{align*}
where 
$(a)$ follows from $\varepsilon \leq \frac{1}{12}$ together with $f$ being $\mu_p$ strongly convex, 
$(b)$ follows from the choice of 
$
K = 24 \sqrt{\frac{\mu_d}{\mu_p}}, 
$
and $(c)$ follows from the induction hypothesis and the choice of $\delta$ in \eqref{general_minimax_stoch_noise_condition_sc}.
The proof is then completed. 
\end{proof}

Until now our discussions have been based on the presumption that \eqref{eq_stoch_minimax_err_condition_catalyst} holds with certain conditions presented in Lemma \ref{lemma_general_convergence_minimax_stoch_nsc} and \ref{lemma_general_convergence_minimax_stoch_sc} (e.g., \eqref{general_minimax_stoch_alpha_beta} and \eqref{general_minimax_stoch_noise_condition}).
In the next subsection, we introduce a slight modification of the SREG method discussed in Section~\ref{sec_eg} for solving the proximal step,
and determine the total sample complexity of the catalyst scheme when catalyzing the SREG method.

\subsection{Catalyst Scheme with Stochastic Regularized Extragradient}\label{subsec_smeg_in_catalyst}

Before we proceed to technical discussions in this subsection,  it might be worth reiterating that within the approximate proximal update \eqref{eq_stoch_minimax_err_condition_catalyst}, the dual reference point  can be chosen a posteriori (see Lemma \ref{lemma_recursion_minimax_catalyst_one_step_deterministic}). 
In the stochastic setting this creates a unique challenge as such a selection can depend on the noise when solving the proximal  step \eqref{eq:cp_subproblem_minmax}.  
To address this challenge, we will modify the SREG method discussed in Section \ref{sec_eg}, and adopt a novel analysis by introducing an auxiliary sequence defined in the dual space to handle the correlation with noise when selecting the dual reference point.

We now present a simple modification of the SREG method for solving the proximal step \eqref{eq:cp_subproblem_minmax}.
The resulting method, presented in Algorithm \ref{procedure_smeg_stoch_sc}, considers the problem of $\min_{x \in X} \max_{y \in Y} \psi(x,y) $ where $\psi$ is $L_\psi$-smooth, $\mu_X$-strongly-convex w.r.t. $x$,
and $\mu_Y$-strongly-concave w.r.t. $y$.
With a slight terminology overload, we refer to this method also as the SREG method within our discussion in this subsection. 
It can be readily seen that the only modification we make is the separation of strong-convexity/concavity modulus in defining  the ``extragradient'' step \eqref{eg_update_2_stoch} for the primal and dual variables.
As will be clarified in the ensuing Lemma \ref{lemma_seg_in_catalyst_generic}, this separation turns out to be essential for the efficiency of the catalyst scheme in the stochastic setting.

\begin{algorithm}[H]
\caption{\texttt{SREG}$(\psi)$: extragradient for $\min_{x \in X} \max_{y \in Y} \psi(x,y) $}
\begin{algorithmic}
\STATE {\bf Input:} stepsizes $\cbr{\eta_t}$, total number of steps $n > 0$, initial point $z_0 \in Z$
\FOR{t = 0, 1, \ldots, T-1}
\STATE Define ${G}(z, \xi) = [\nabla_{x} \psi(z; \xi); -\nabla_{y} \psi(z, \xi)]$, sample $\xi_t, \hat{\xi}_t$, and update
\begin{align}
\hat{z}_t &= \argmin_{z \in Z} \eta_t \inner{G(z_t, \xi_t)}{z} + \frac{1}{2} \norm{z - z_t}^2; \label{eg_update_1_stoch} \\
z_{t+1} & = \argmin_{z \in Z}\eta_t \sbr{ \inner{G(\hat{z}_t, \hat{\xi}_t)}{z} + \frac{\mu_X}{2} \norm{x - \hat{x}_t}^2 +  \frac{\mu_Y}{2} \norm{y - \hat{y}_t}^2} + \frac{1}{2} \norm{z - z_t}^2, \label{eg_update_2_stoch}
\end{align}
where $\mu_X$ (resp. $\mu_Y$) is the strong-convexity (resp. strong-concavity) modulus of $\psi$ w.r.t. $x$ (resp. $y$).
\ENDFOR
\STATE Construct $\overline{z}_T =  {\tsum_{t=0}^{T-1} \eta_t \Lambda_t \hat{z}_t }/({\tsum_{t=0}^{T-1} \eta_t \Lambda_t})$, with $\cbr{\Lambda_t}$ defined as \begin{align}
\Lambda_t = 
\begin{cases}
1,  & t = 0;  \\
(1 + \mu_X \eta_{t-1}) \Lambda_{t-1},  & t \geq 1.
\end{cases}
\label{seg_def_Lambda}
\end{align}
\STATE {\bf Output:}  $(\overline{z}_T, z_T)$.
\end{algorithmic}\label{procedure_smeg_stoch_sc}
\end{algorithm}

For notational simplicity, going forward let us denote 
$\zeta_t = G(z_t, \xi_t) - G(z_t)$ and $ \hat{\zeta}_t = G(\hat{z}_t, \hat{\xi}_t) - G(\hat{z}_t)$.
For any $\zeta \in Z$, we will often write $\zeta^X, \zeta^Y$ to denote its $X$ and $Y$ component, respectively. 
The next lemma characterizes each step of the SREG method by introducing an auxiliary sequence defined in the dual space.

\begin{lemma}\label{lemma_eg_stoch_step_characterization}
Define $u_0 = y_0$, and 
\begin{align}\label{def_dual_aux_sequence}
u_{t+1} = \argmin_{y \in Y} \eta_t {\inner{- \hat{\zeta}^Y_t}{y}  } +  \frac{\vartheta_t}{2} \norm{y - y_t}^2  + \frac{1}{2} \norm{y - u_t}^2.
\end{align}
Suppose 
$
L_\psi \eta_t \leq \frac{1}{2}.
$
Then for any $t \geq 0$,  we have
\begin{align}
 & \eta_t  \sbr{ \psi(\hat{x}_t, y) - \psi(x, \hat{y}_t)}   + \frac{\mu_{X} \eta_t + 1}{2} \norm{x - x_{t+1}}^2  + \frac{\mu_{Y} \eta_t + 1}{2} \norm{y - y_{t+1}}^2 \nonumber  \\
& ~~~  + \eta_t \inner{-\hat{\zeta}_t^Y}{u_t - \hat{y}_t} 
 + \eta_t \inner{\hat{\zeta}_t^X}{\hat{x}_t - x} 
+ \frac{1 + \vartheta_t}{2} \norm{y - u_{t+1}}^2   \nonumber  \\
\leq & \frac{1}{2} \norm{x - x_t}^2 +   \frac{1 + \vartheta_t}{2} \norm{y - y_t}^2 +  \frac{1}{2} \norm{y - u_t}^2 + 4   \eta_t^2 \rbr{ \norm{\zeta_t}^2 + \norm{\hat{\zeta}_t}^2}.
\label{seg_one_step_recursion}
\end{align}
\end{lemma}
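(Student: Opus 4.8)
The proof proceeds along the same lines as Lemma~\ref{lemma_seg_one_step_recursion_general}, the extra ingredient being the dual auxiliary sequence $\{u_t\}$, which is introduced precisely to decouple the dual noise $\hat\zeta_t^Y$ from the dual reference point $y$ (the latter being chosen a posteriori). First I would write down the three prox optimality (three-point) inequalities: one for the prediction step \eqref{eg_update_1_stoch}; one for the correction step \eqref{eg_update_2_stoch}, which, being $(1+\mu_X\eta_t)$-strongly convex in $x$ and $(1+\mu_Y\eta_t)$-strongly convex in $y$, yields $\tfrac{\mu_X\eta_t+1}{2}\|x-x_{t+1}\|^2$ and $\tfrac{\mu_Y\eta_t+1}{2}\|y-y_{t+1}\|^2$ on the left together with $\tfrac{\mu_X\eta_t}{2}\|x-\hat x_t\|^2+\tfrac{\mu_Y\eta_t}{2}\|y-\hat y_t\|^2+\tfrac12\|z-z_t\|^2$ on the right; and one for the auxiliary update \eqref{def_dual_aux_sequence}, which is $(1+\vartheta_t)$-strongly convex in $y$ and hence yields $\tfrac{1+\vartheta_t}{2}\|y-u_{t+1}\|^2$ on the left and $\tfrac{\vartheta_t}{2}\|y-y_t\|^2+\tfrac12\|y-u_t\|^2$ on the right. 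Setting $z=z_{t+1}$ in the prediction inequality and adding it to the correction inequality cancels the $\tfrac12\|z_{t+1}-z_t\|^2$ terms.

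Next I would substitute $G(z_t,\xi_t)=G(z_t)+\zeta_t$ and $G(\hat z_t,\hat\xi_t)=G(\hat z_t)+\hat\zeta_t$ and peel off the two diagonal noise terms $\eta_t\langle\zeta_t,\hat z_t-z_{t+1}\rangle$ and $\eta_t\langle\hat\zeta_t,z_{t+1}-\hat z_t\rangle$ by Young's inequality, each at the cost of $2\eta_t^2\|\cdot\|^2$ plus $\tfrac18\|\hat z_t-z_{t+1}\|^2$, so that a net $\tfrac14\|\hat z_t-z_{t+1}\|^2$ survives on the left. The genuinely new step is the treatment of the leftover $\eta_t\langle\hat\zeta_t,\hat z_t-z\rangle=\eta_t\langle\hat\zeta_t^X,\hat x_t-x\rangle+\eta_t\langle-\hat\zeta_t^Y,y-\hat y_t\rangle$: I keep $\eta_t\langle\hat\zeta_t^X,\hat x_t-x\rangle$ and $\eta_t\langle-\hat\zeta_t^Y,u_t-\hat y_t\rangle$ on the left and rewrite the remaining piece as $\eta_t\langle-\hat\zeta_t^Y,y-u_t\rangle$. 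Invoking the optimality inequality for $u_{t+1}$, and using one more Young step to bound $\eta_t\langle-\hat\zeta_t^Y,u_{t+1}-u_t\rangle$ (whose quadratic residual is absorbed by the $\tfrac12\|u_{t+1}-u_t\|^2$ coming from that same optimality inequality), gives $\eta_t\langle-\hat\zeta_t^Y,y-u_t\rangle\ge\tfrac{1+\vartheta_t}{2}\|y-u_{t+1}\|^2-\tfrac{\vartheta_t}{2}\|y-y_t\|^2-\tfrac12\|y-u_t\|^2-\tfrac{\eta_t^2}{2}\|\hat\zeta_t^Y\|^2$; substituting this lower bound on the left produces the $\tfrac{1+\vartheta_t}{2}\|y-u_{t+1}\|^2$ term and sends $\tfrac{\vartheta_t}{2}\|y-y_t\|^2$, $\tfrac12\|y-u_t\|^2$ to the right.

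Finally I would apply the deterministic extragradient approximation estimate (exactly the computation in \eqref{eg_extra_step_approx_stoch_general}, now with separate moduli): convexity--concavity together with $\mu_X$-/$\mu_Y$-strong monotonicity of $\psi$ gives $\eta_t[\langle G(z_t),\hat z_t-z_{t+1}\rangle+\langle G(\hat z_t),z_{t+1}-z\rangle]+\tfrac12\|\hat z_t-z_t\|^2+\tfrac14\|\hat z_t-z_{t+1}\|^2\ge\eta_t[\psi(\hat x_t,y)-\psi(x,\hat y_t)+\tfrac{\mu_X}{2}\|\hat x_t-x\|^2+\tfrac{\mu_Y}{2}\|\hat y_t-y\|^2]$, where $L_\psi\eta_t\le\tfrac12$ and H\"older's inequality absorb the cross term $\langle G(z_t)-G(\hat z_t),\hat z_t-z_{t+1}\rangle$. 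The $\tfrac{\mu_X}{2},\tfrac{\mu_Y}{2}$ terms cancel against $\tfrac{\mu_X\eta_t}{2}\|x-\hat x_t\|^2,\tfrac{\mu_Y\eta_t}{2}\|y-\hat y_t\|^2$ on the right; splitting $\tfrac12\|z-z_t\|^2$ into its $X$- and $Y$-components and merging the $Y$-component $\tfrac12\|y-y_t\|^2$ with the $\tfrac{\vartheta_t}{2}\|y-y_t\|^2$ from the previous step produces the coefficient $\tfrac{1+\vartheta_t}{2}$ on $\|y-y_t\|^2$; and collecting the noise remainders, $2\eta_t^2(\|\zeta_t\|^2+\|\hat\zeta_t\|^2)+\tfrac{\eta_t^2}{2}\|\hat\zeta_t^Y\|^2\le 4\eta_t^2(\|\zeta_t\|^2+\|\hat\zeta_t\|^2)$, yields \eqref{seg_one_step_recursion}. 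I expect the main obstacle to be not any single estimate but the bookkeeping around the auxiliary sequence: getting the signs of the $-\hat\zeta_t^Y$ inner products consistent (recall $G^Y=-\nabla_y\psi$ and $\hat\zeta_t^Y$ is the noise in $G^Y$), and checking that the quadratic residuals of the two Young steps involving $u_t,u_{t+1}$ are exactly matched by the strong-convexity surplus of the $u_{t+1}$ prox step.
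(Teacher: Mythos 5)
Your proposal is correct and follows essentially the same route as the paper's proof: the two prox three-point inequalities combined at $z=z_{t+1}$, Young's inequality on the diagonal noise terms leaving $\tfrac14\|\hat z_t-z_{t+1}\|^2$, the separate-moduli extragradient estimate \eqref{eg_extra_step_approx_stoch}, and the optimality inequality for the auxiliary update \eqref{def_dual_aux_sequence} with one more Young step to absorb $\eta_t\langle-\hat\zeta_t^Y,u_{t+1}-u_t\rangle$ into $\tfrac12\|u_{t+1}-u_t\|^2$, exactly as in \eqref{ineq_aux_sequence_one_step_recursion}. The sign bookkeeping and the final collection of noise residuals into $4\eta_t^2(\|\zeta_t\|^2+\|\hat\zeta_t\|^2)$ also match the paper.
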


\begin{proof}
First, the optimality condition of \eqref{eg_update_1_stoch} yields 
\begin{align}\label{eg_update_1_opt_condition_stoch}
\eta_t \inner{G(z_t, \xi_t)}{\hat{z}_t - z}
+ \frac{1}{2} \norm{\hat{z}_t - z_t}^2 + \frac{1}{2} \norm{\hat{z}_t - z}^2 
\leq \frac{1}{2} \norm{z - z_t}^2.
\end{align}
Similarly, the optimality condition of \eqref{eg_update_2_stoch} yields
\begin{align}\label{eg_update_2_opt_condition_stoch}
& \eta_t \inner{G(\hat{z}_t, \hat{\xi}_t)}{z_{t+1} - z} + \frac{\mu_{X} \eta_t + 1}{2} \norm{x - x_{t+1}}^2  + \frac{\mu_{Y} \eta_t + 1}{2} \norm{y - y_{t+1}}^2
+ \frac{1}{2} \norm{z_{t+1} - z_t}^2  \nonumber \\
\leq &
 \frac{\mu_{X} \eta_t}{2} \norm{x - \hat{x}_t}^2 +  \frac{\mu_{Y} \eta_t}{2} \norm{y - \hat{y}_t}^2 + \frac{1}{2} \norm{z - z_t}^2. 
\end{align}
By taking $z = z_{t+1}$ in \eqref{eg_update_1_opt_condition_stoch}, and combining with \eqref{eg_update_2_opt_condition_stoch}, we obtain 
\begin{align}
& \eta_t \inner{G(z_t)}{\hat{z}_t - z_{t+1}}
+ \frac{1}{2} \norm{\hat{z}_t - z_t}^2 + \frac{1}{2} \norm{\hat{z}_t - z_{t+1}}^2
+ \eta_t \inner{G(\hat{z}_t)}{z_{t+1} - z} \nonumber \\
& ~~~ + \frac{\mu_{X} \eta_t + 1}{2} \norm{x - x_{t+1}}^2  + \frac{\mu_{Y} \eta_t + 1}{2} \norm{y - y_{t+1}}^2 
 + \eta_t \inner{G(z_t, \xi_t) - G(z_t) }{\hat{z}_t - z_{t+1}} 
 \nonumber  \\
 & ~~~
 +  \eta_t \inner{G(\hat{z}_t, \hat{\xi}_t) - G(\hat{z}_t)}{z_{t+1} - \hat{z}_t}
 + \eta_t \inner{G(\hat{z}_t, \hat{\xi}_t) - G(\hat{z}_t)}{\hat{z}_t - z}
  \nonumber  \\
\leq &  \frac{\mu_{X} \eta_t}{2} \norm{x - \hat{x}_t}^2 +  \frac{\mu_{Y} \eta_t}{2} \norm{y - \hat{y}_t}^2 + \frac{1}{2} \norm{z - z_t}^2.  \nonumber
\end{align}
Then further applying Young's inequality to the above relation yields 
\begin{align}
& \eta_t \inner{G(z_t)}{\hat{z}_t - z_{t+1}}
+ \frac{1}{2} \norm{\hat{z}_t - z_t}^2 + \frac{1}{4} \norm{\hat{z}_t - z_{t+1}}^2
+ \eta_t \inner{G(\hat{z}_t)}{z_{t+1} - z} \nonumber  \\
& ~~~  + \frac{\mu_{X} \eta_t + 1}{2} \norm{x - x_{t+1}}^2  + \frac{\mu_{Y} \eta_t + 1}{2} \norm{y - y_{t+1}}^2  
 + \eta_t \inner{\hat{\zeta}_t}{\hat{z}_t - z}
  \nonumber  \\
\leq &  \frac{\mu_{X} \eta_t}{2} \norm{x - \hat{x}_t}^2 +  \frac{\mu_{Y} \eta_t}{2} \norm{y - \hat{y}_t}^2 + \frac{1}{2} \norm{z - z_t}^2 
+ 2 \eta_t^2 \rbr{ \norm{\zeta_t}^2 + \norm{\hat{\zeta}_t}^2}.
 \label{eg_recursion_raw_stoch}
\end{align}
Following similar lines as in the proof of Lemma \ref{lemma_eg_deterministic}, one also has 
\begin{align}
& \eta_t \sbr{ \inner{G(z_t)}{\hat{z}_t - z_{t+1}} +  \inner{G(\hat{z}_t)}{z_{t+1} - z}  } + \frac{1}{2} \norm{\hat{z}_t - z_t}^2 + \frac{1}{4} \norm{\hat{z}_t - z_{t+1}}^2  \nonumber \\
= & \eta_t  \sbr{ \inner{G(\hat{z}_t)}{\hat{z}_t - z} + \inner{G(z_t) - G(\hat{z}_t)}{\hat{z}_t - z_{t+1}} }  + \frac{1}{2} \norm{\hat{z}_t - z_t}^2 + \frac{1}{4} \norm{\hat{z}_t - z_{t+1}}^2  \nonumber \\
\overset{(a)}{\geq} & \eta_t \sbr{ \psi(\hat{x}_t, y) - \psi(x, \hat{y}_t) + \frac{\mu_{X}}{2} \norm{\hat{x}_t - x}^2  +  \frac{\mu_{Y}}{2} \norm{\hat{y}_t - y}^2
- L_\psi\norm{z_t - \hat{z}_t} \norm{\hat{z}_t - z_{t+1}} } \nonumber \\
&~~~ + \frac{1}{2} \norm{\hat{z}_t - z_t}^2 + \frac{1}{4} \norm{\hat{z}_t - z_{t+1}}^2 \nonumber \\
\overset{(b)}{\geq} & \eta_t  \sbr{ \psi(\hat{x}_t, y) - \psi(x, \hat{y}_t) + \frac{\mu_{X}}{2} \norm{\hat{x}_t - x}^2  +  \frac{\mu_{Y}}{2} \norm{\hat{y}_t - y}^2 }, \label{eg_extra_step_approx_stoch}
\end{align}
where $(a)$ follows from $G$ being $L_\psi$-Lipschitz, and $\psi$ is $\mu_{X}$-strongly convex w.r.t. $x$ and $\mu_Y$-strongly-concave w.r.t. $y$;  $(b)$ follows from H\"{o}lder's inequality together with $L_\psi\eta_t \leq {1}/{2}$.

From the optimality condition of \eqref{def_dual_aux_sequence}, we obtain 
\begin{align*}
& \eta_t \inner{- \hat{\zeta}_t}{u_{t+1} - y}
+ \frac{1 }{2} \norm{u_{t+1} - u_t}^2 + \frac{1 + \vartheta_t}{2} \norm{y - u_{t+1}}^2 
\leq  \frac{1}{2} \norm{y - u_t}^2 + \frac{\vartheta_t}{2} \norm{y - y_t}^2 .
\end{align*}
It follows from the above relation and the Young's inequality that 
\begin{align}\label{ineq_aux_sequence_one_step_recursion}
\eta_t \inner{-\hat{\zeta}_t}{u_t - \hat{y}_t} 
+ \eta_t \inner{- \hat{\zeta}_t}{\hat{y}_t - y}
+ \frac{1 + \vartheta_t}{2} \norm{y - u_{t+1}}^2 
\leq  \frac{1}{2} \norm{y - u_t}^2 + \frac{\vartheta_t}{2} \norm{y - y_t}^2 + \frac{\eta_t^2}{2} \norm{\hat{\zeta}_t}^2.
\end{align}
The desired claim follows by combining \eqref{eg_recursion_raw_stoch}, \eqref{eg_extra_step_approx_stoch} and \eqref{ineq_aux_sequence_one_step_recursion}.
\end{proof}

We then proceed to establish the convergence properties of the SREG method presented in Algorithm \ref{procedure_smeg_stoch_sc}. 

\begin{lemma}\label{lemma_seg_in_catalyst_generic}
Let $\cbr{u_t}$ be defined as in Lemma \ref{lemma_eg_stoch_step_characterization}. 
Suppose 
\begin{align}\label{seg_stoch_sc_condition}
4 \mu_X & \leq \mu_Y, 
\end{align}
and 
\begin{align}
L_\psi \eta_t \leq \frac{1}{2}, ~  \eta_{t-1}  \geq \eta_{t }, ~ & t \geq 1 ;  \label{seg_eta_condition} \\
 1 + \vartheta_t  = \sqrt{1 + \mu_Y \eta_{t}}, ~ & t \geq 0.  \label{seg_vartheta_condition_2}
\end{align}
Then with $\cbr{\Lambda_t}$ defined in \eqref{seg_def_Lambda}, we have
\begin{align}\label{seg_convergence_prop_general}
& \psi(\overline{x}_T, y) - \psi(x, \overline{y}_T)
+  \frac{\mu_X \Lambda_T}{2 (\Lambda_T - \Lambda_0)} \norm{x_T - x}^2  + \frac{\mu_X \Lambda_T}{2 (\Lambda_T - \Lambda_0) } \norm{y_T - y}^2
+ (\tsum_{t=0}^{T-1} \eta_t \Lambda_t)^{-1}\tsum_{t=0}^{T-1} \Lambda_t \cB_t \nonumber \\
\leq & 
\frac{\mu_X \Lambda_0}{2(\Lambda_T - \Lambda_0)} \norm{x_0 - x}^2  
+ \frac{2 \mu_X  \Lambda_0}{ \Lambda_T - \Lambda_0} \norm{y_0 - y}^2 
+ 
 \frac{4\mu_X}{ (\Lambda_T - \Lambda_0)}  \tsum_{t=0}^{T-1} \Lambda_t \eta_t^2 \rbr{\norm{\zeta_t}^2 + \norm{\hat{\zeta}_t}^2},
\end{align}
where $\cB_t = \eta_t \inner{-\hat{\zeta}_t^Y}{u_t - \hat{y}_t} 
 + \eta_t \inner{\hat{\zeta}_t^X}{\hat{x}_t - x} $.
\end{lemma}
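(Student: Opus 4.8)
The plan is to weight the per-step inequality \eqref{seg_one_step_recursion} of Lemma \ref{lemma_eg_stoch_step_characterization} by $\Lambda_t$, telescope over $t=0,\dots,T-1$, then divide by $\tsum_{t=0}^{T-1}\eta_t\Lambda_t$ and apply Jensen's inequality. Multiplying \eqref{seg_one_step_recursion} by $\Lambda_t$ and using $\Lambda_{t+1}=(1+\mu_X\eta_t)\Lambda_t$ from \eqref{seg_def_Lambda} turns $\tfrac{(\mu_X\eta_t+1)\Lambda_t}{2}\norm{x-x_{t+1}}^2$ into $\tfrac{\Lambda_{t+1}}{2}\norm{x-x_{t+1}}^2$, so the primal iterate terms telescope against $\tfrac{\Lambda_t}{2}\norm{x-x_t}^2$, leaving the boundary contribution $\tfrac{\Lambda_T}{2}\norm{x-x_T}^2-\tfrac{\Lambda_0}{2}\norm{x-x_0}^2$; the gap terms $\eta_t\Lambda_t[\psi(\hat{x}_t,y)-\psi(x,\hat{y}_t)]$, the bias terms $\Lambda_t\cB_t$, and the noise terms $4\eta_t^2\Lambda_t(\norm{\zeta_t}^2+\norm{\hat{\zeta}_t}^2)$ all simply accumulate.

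The delicate part is matching the three geometrically-growing weight sequences $\Lambda_t$, $1+\vartheta_t=\sqrt{1+\mu_Y\eta_t}$, and $1+\mu_Y\eta_t$ so that the two dual sequences $\{y_t\}$ and $\{u_t\}$ telescope in the right direction. The key elementary fact I would use is $\sqrt{1+4a}\ge 1+a$ for $0\le a\le 2$; applied with $a=\mu_X\eta_t\le L_\psi\eta_t\le\tfrac12$ and $\mu_Y\ge 4\mu_X$ from \eqref{seg_stoch_sc_condition}, it gives $\sqrt{1+\mu_Y\eta_t}\ge 1+\mu_X\eta_t$, hence $(1+\vartheta_t)\Lambda_t\ge(1+\mu_X\eta_t)\Lambda_t=\Lambda_{t+1}$. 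This lets the $u$-terms collapse, using $u_0=y_0$, down to $\tfrac{\Lambda_T}{2}\norm{y-u_T}^2-\tfrac{\Lambda_0}{2}\norm{y-y_0}^2$ (the first term is nonnegative and can be discarded). For $\{y_t\}$, after reindexing the right-hand term $\tfrac{(1+\vartheta_t)\Lambda_t}{2}\norm{y-y_t}^2$, I need $(1+\mu_Y\eta_t)\Lambda_t\ge(1+\vartheta_{t+1})\Lambda_{t+1}$; this follows from $\eta_t\ge\eta_{t+1}$ in \eqref{seg_eta_condition} (so $\sqrt{1+\mu_Y\eta_{t+1}}\le\sqrt{1+\mu_Y\eta_t}$) together with the same bound $\sqrt{1+\mu_Y\eta_t}\ge 1+\mu_X\eta_t$. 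The leftover boundary pieces are $\tfrac{(1+\mu_Y\eta_{T-1})\Lambda_{T-1}}{2}\norm{y-y_T}^2\ge\tfrac{\Lambda_T}{2}\norm{y-y_T}^2$, since $(1+\mu_Y\eta_{T-1})\Lambda_{T-1}=\tfrac{1+\mu_Y\eta_{T-1}}{1+\mu_X\eta_{T-1}}\Lambda_T\ge\Lambda_T$, and a subtracted $\tfrac{(1+\vartheta_0)\Lambda_0}{2}\norm{y-y_0}^2$, which together with the $\tfrac{\Lambda_0}{2}\norm{y-y_0}^2$ from the $u$-telescoping contributes a coefficient $\tfrac{2+\vartheta_0}{2}\Lambda_0\le 2\Lambda_0$ on $\norm{y-y_0}^2$ (using $\vartheta_0=\sqrt{1+\mu_Y\eta_0}-1\le\sqrt{1+L_\psi\eta_0}-1<1$).

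Assembling these telescoped bounds and discarding the nonnegative $u_T$ term yields, for every $(x,y)\in Z$,
\begin{align*}
& \tsum_{t=0}^{T-1}\eta_t\Lambda_t\sbr{\psi(\hat{x}_t,y)-\psi(x,\hat{y}_t)}+\tfrac{\Lambda_T}{2}\norm{x-x_T}^2+\tfrac{\Lambda_T}{2}\norm{y-y_T}^2+\tsum_{t=0}^{T-1}\Lambda_t\cB_t \\
& \qquad \le \tfrac{\Lambda_0}{2}\norm{x-x_0}^2+2\Lambda_0\norm{y-y_0}^2+4\tsum_{t=0}^{T-1}\eta_t^2\Lambda_t\rbr{\norm{\zeta_t}^2+\norm{\hat{\zeta}_t}^2}.
\end{align*}
Finally I would divide through by $\tsum_{t=0}^{T-1}\eta_t\Lambda_t=(\Lambda_T-\Lambda_0)/\mu_X$, which follows by telescoping $\Lambda_{t+1}-\Lambda_t=\mu_X\eta_t\Lambda_t$, and bound $\psi(\overline{x}_T,y)-\psi(x,\overline{y}_T)$ above by the weighted average of $\psi(\hat{x}_t,y)-\psi(x,\hat{y}_t)$ using convexity of $\psi$ in $x$, concavity in $y$, and the definition $\overline{z}_T=(\tsum_{t}\eta_t\Lambda_t)^{-1}\tsum_{t}\eta_t\Lambda_t\hat{z}_t$; this produces exactly \eqref{seg_convergence_prop_general}. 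I expect the main obstacle to be precisely this bookkeeping of the weight ratios so that every dual term telescopes with the correct sign — once $(1+\vartheta_t)\Lambda_t\ge\Lambda_{t+1}$ and $(1+\mu_Y\eta_t)\Lambda_t\ge(1+\vartheta_{t+1})\Lambda_{t+1}$ are in hand, the remainder is routine.
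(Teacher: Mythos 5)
Your proposal is correct and follows essentially the same route as the paper's proof: multiply \eqref{seg_one_step_recursion} by $\Lambda_t$, establish the weight comparisons $(1+\vartheta_t)\Lambda_t \geq \Lambda_{t+1}$ and $(1+\mu_Y\eta_t)\Lambda_t \geq (1+\vartheta_{t+1})\Lambda_{t+1}$ via $\sqrt{1+\mu_Y\eta}\geq 1+\mu_X\eta$ (the paper packages this as $\sqrt{1+x}\geq 1+x/4$ together with $4\mu_X\leq\mu_Y$, which is the same bound), telescope, control the boundary and $y_0$ terms, and divide by $\tsum_t\eta_t\Lambda_t=(\Lambda_T-\Lambda_0)/\mu_X$ before applying convexity--concavity. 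The bookkeeping you identify as the crux is exactly the crux of the paper's argument, and your handling of it is sound.
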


\begin{proof}
Multiplying both sides of \eqref{seg_one_step_recursion} by $\Lambda_t$, while 
writing $\hat{\Delta}_t = \psi(\hat{x}_t, y) - \psi(x, \hat{y}_t)$, 
$d^X_t = \norm{x - x_t}^2$, $d^Y_t= \norm{y - y_t}^2$,  and $d^U_t = \norm{y - u_t}^2$, 
we obtain  
\begin{align}\label{seg_pre_telescopt_one_step}
& \eta_t \Lambda_t \hat{\Delta_t} + \frac{ (\mu_X \eta_t + 1) \Lambda_t}{2} d^X_{t+1}
+ \frac{(\mu_Y \eta_t + 1) \Lambda_t}{2} d^Y_{t+1} + \Lambda_t \cB_t 
+ \frac{(1+\vartheta_t)\Lambda_t}{2} d^U_{t+1}  \nonumber \\
\leq & 
\frac{\Lambda_t}{2} d^X_t + \frac{(1+\vartheta_t) \Lambda_t}{2} d^Y_t + \frac{\Lambda_t}{2} d^U_t 
+ 4 \Lambda_t \eta_t^2 \rbr{\norm{\zeta_t}^2 + \norm{\hat{\zeta}_t}^2} .
\end{align}
In view of  $\eta_{t-1} \geq \eta_t$ and \eqref{seg_vartheta_condition_2}, it can be verified that 
\begin{align*}
\min \cbr{
\frac{1 + \mu_Y\eta_{t-1}}{1 + \vartheta_t}, 1 + \vartheta_{t-1} 
}
\geq 
\min \cbr{
\frac{1 + \mu_Y\eta_{t-1}}{1 + \vartheta_{t-1}}, 1 + \vartheta_{t-1} 
}
= \sqrt{1 + \mu_Y \eta_{t-1}}.
\end{align*}
Consequently, by combining the above relation and  $\eta_t \leq 1/L_\psi$,  together with the fact that $\sqrt{1 + x} \geq 1 + x/ 4$ for $x \in [0,1]$, we have
\begin{align*}
\min \cbr{
\frac{1 + \mu_Y\eta_{t-1}}{1 + \vartheta_t}, 1 + \vartheta_{t-1} 
}
\geq 1 + \frac{\mu_Y \eta_{t-1} }{4} 
\geq 1 + \mu_X \eta_{t-1},
\end{align*}
where the last inequality follows from \eqref{seg_stoch_sc_condition}.
Given the definition of $\Lambda_t$ in \eqref{seg_def_Lambda}, this in turn implies
\begin{align}
(1+ \vartheta_t) \Lambda_t & \leq (1 + \mu_Y \eta_{t-1}) \Lambda_{t-1}, \label{Lambda_recursion_Y}  \\
 \Lambda_t & \leq  (1 + \vartheta_{t-1}) \Lambda_{t-1}. \nonumber
\end{align}
One can then take telescopic sum of \eqref{seg_pre_telescopt_one_step} and obtain 
\begin{align*}
& \tsum_{t=0}^{T-1} \eta_t \Lambda_t \hat{\Delta}_t + \frac{(\mu_X \eta_{T-1} + 1) \Lambda_{T-1}}{2} d^X_T
+  \frac{(\mu_Y \eta_{T-1} + 1) \Lambda_{T-1}}{2} d^Y_T
+ \tsum_{t=0}^{T-1} \Lambda_t \cB_t  \\
\leq &  \frac{\Lambda_0}{2} d^X_0
+ \frac{(1 + \vartheta_0) \Lambda_0}{2} d^Y_0 
+ \frac{\Lambda_0}{2} d^U_0 
+ 4 \tsum_{t=0}^{T-1} \Lambda_t \eta_t^2 \rbr{\norm{\zeta_t}^2 + \norm{\hat{\zeta}_t}^2} \\
 = &   \frac{\Lambda_0}{2} d^X_0
+ \frac{(2 + \vartheta_0) \Lambda_0}{2} d^Y_0 
+ 4 \tsum_{t=0}^{T-1} \Lambda_t \eta_t^2 \rbr{\norm{\zeta_t}^2 + \norm{\hat{\zeta}_t}^2} ,
\end{align*}
where the last equality follows from the definition of $u_0 = y_0$.
Dividing both sides of the above inequality by $\tsum_{t=0}^{T-1} \eta_t \Lambda_t$, and noting from \eqref{seg_def_Lambda} that 
$
\tsum_{t=0}^{T-1} \eta_t \Lambda_t = \frac{1}{\mu_X} \rbr{\Lambda_T - \Lambda_0},
$
we obtain 
\begin{align*}
& (\tsum_{t=0}^{T-1} \eta_t \Lambda_t)^{-1} \tsum_{t=0}^{T-1} \eta_t \Lambda_t \hat{\Delta}_t
+ \frac{\mu_X \Lambda_T}{2 (\Lambda_T - \Lambda_0)} d_T^X + \frac{\mu_X (\mu_Y \eta_{T-1} + 1) \Lambda_{T-1}}{2 (\Lambda_T - \Lambda_0) } d^Y_T
+ (\tsum_{t=0}^{T-1} \eta_t \Lambda_t)^{-1}\tsum_{t=0}^{T-1} \Lambda_t \cB_t \\
\leq & 
\frac{\mu_X \Lambda_0}{2(\Lambda_T - \Lambda_0)} d^X_0 
+ \frac{\mu_X (2 + \vartheta_0) \Lambda_0}{2 (\Lambda_T - \Lambda_0)} d^Y_0 
+ 
 \frac{4\mu_X}{ (\Lambda_T - \Lambda_0)}  \tsum_{t=0}^{T-1} \Lambda_t \eta_t^2 \rbr{\norm{\zeta_t}^2 + \norm{\hat{\zeta}_t}^2}.
\end{align*}
It remains to note from \eqref{Lambda_recursion_Y} that $(\mu_Y \eta_{T-1} + 1) \Lambda_{T-1} \geq (1 + \vartheta_T) \Lambda_T \geq \Lambda_T$, which yields 
\begin{align*}
& (\tsum_{t=0}^{T-1} \eta_t \Lambda_t)^{-1} \tsum_{t=0}^{T-1} \eta_t \Lambda_t \hat{\Delta}_t
+ \frac{\mu_X \Lambda_T}{2 (\Lambda_T - \Lambda_0)} d_T^X + \frac{\mu_X  \Lambda_T }{2 (\Lambda_T - \Lambda_0) } d^Y_T
+ (\tsum_{t=0}^{T-1} \eta_t \Lambda_t)^{-1}\tsum_{t=0}^{T-1} \Lambda_t \cB_t \\
\leq & 
\frac{\mu_X \Lambda_0}{2(\Lambda_T - \Lambda_0)} d^X_0 
+ \frac{\mu_X (2 + \vartheta_0) \Lambda_0}{2 (\Lambda_T - \Lambda_0)} d^Y_0 
+ 
 \frac{4\mu_X}{ (\Lambda_T - \Lambda_0)}  \tsum_{t=0}^{T-1} \Lambda_t \eta_t^2 \rbr{\norm{\zeta_t}^2 + \norm{\hat{\zeta}_t}^2}.
\end{align*}
The desired claim follows from the above relation, the definition of $\overline{z}_T$ and $\psi$ being convex-concave,
along with $2 + \vartheta_0 = 1 + \sqrt{1 + \mu_Y \eta_0} \leq 4$.
\end{proof}

With Lemma \ref{lemma_seg_in_catalyst_generic} in place, we are now ready to establish that with proper parameter choices, the SREG method in Algorithm \ref{procedure_smeg_stoch_sc} indeed produces solution satisfying \eqref{eq_stoch_minimax_err_condition_catalyst}.

\begin{lemma}\label{lemma_seg_catalyst_stoch_const_stepsize}
Fix total steps $T > 0$  a priori in the SREG method, and choose 
\begin{align*}
\eta_t = \eta \coloneqq \min \cbr{\overline{\eta}, \frac{q \log T}{\mu_X T}}, ~ 
q = \max \cbr{
\frac{2 \sbr{ \log( \overline{\mu}_X^2 D_Y^2 / \sigma^2) + \log T }}{\log T} , \frac{1}{\log T} 
},
\end{align*}
where 
$
\overline{\eta} \leq \frac{1}{2 L_\psi}, ~
\overline{\mu}_X \geq \mu_X. 
$
Suppose further that \eqref{seg_stoch_sc_condition} holds, then for any fixed $x$ and potentially randomized $y$
 that is measurable with respect to the filtration generated up to step $T$, we have
\begin{align*}
& \EE \sbr{ \psi(\overline{x}_T, y) - \psi(x, \overline{y}_T)
+  \frac{\mu_X \alpha}{2 } \norm{x_T - x}^2  + \frac{ \mu_X \alpha}{2} \norm{y_T - y}^2}  \\
\leq & \frac{\mu_X \varepsilon'}{2} \norm{x_0 - x}^2 
+ \frac{\mu_X \varepsilon}{2} \norm{y_0 - y}^2 + \delta, 
\end{align*}
where 
\begin{align}
 \varepsilon' = \frac{ \Lambda_0}{\Lambda_T - \Lambda_0}   ,  ~
\alpha  & = \frac{ \Lambda_T}{ \Lambda_T - \Lambda_0} \equiv 1 + \varepsilon', 
~ \Lambda_T = (1 + \mu_X \eta)^T,
 \label{seg_error_char_generic_1} \\
\varepsilon  & =  \frac{4  }{ {\rbr{1 + {\mu_X} \overline{\eta} }^T -1 } }  , \label{seg_error_char_generic_2} \\
~  \delta & =  \frac{16 \sigma^2 \rbr{
\log \rbr{{\overline{\mu}_X^2 D_Y^2}/{\sigma^2}} + \log T + 2
}}{\mu_X T}   . \label{seg_error_char_generic_noise}
\end{align}
In particular, 
suppose $\underline{\mu}_X \leq \mu_X$ and  $T \geq \frac{{1}}{\underline{\mu}_X \overline{\eta} }$, we have 
$
\varepsilon' \leq 1.
$
\end{lemma}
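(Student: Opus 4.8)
The plan is to obtain the claim directly from Lemma~\ref{lemma_seg_in_catalyst_generic} applied with the constant stepsize $\eta_t \equiv \eta$, after which the remaining work is to control the stochastic terms in expectation and to match constants. First I would verify the hypotheses of Lemma~\ref{lemma_seg_in_catalyst_generic}: since $\eta \le \overline{\eta} \le 1/(2L_\psi)$ we have $L_\psi \eta_t \le 1/2$; a constant stepsize makes $\eta_{t-1} \ge \eta_t$ automatic; condition \eqref{seg_vartheta_condition_2} is met by our choice $1 + \vartheta_t = \sqrt{1 + \mu_Y \eta}$; and \eqref{seg_stoch_sc_condition} is assumed. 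Thus \eqref{seg_convergence_prop_general} holds, with $\Lambda_t = (1+\mu_X\eta)^t$ and $\tsum_{t=0}^{T-1}\Lambda_t = (\Lambda_T - \Lambda_0)/(\mu_X\eta)$. Matching the coefficients of $\norm{x_T - x}^2$, $\norm{y_T - y}^2$, and $\norm{x_0 - x}^2$ in \eqref{seg_convergence_prop_general} against the target inequality identifies $\alpha = \Lambda_T/(\Lambda_T - \Lambda_0)$ and $\varepsilon' = \Lambda_0/(\Lambda_T - \Lambda_0)$, i.e.\ \eqref{seg_error_char_generic_1}.

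Next I would take total expectation in \eqref{seg_convergence_prop_general}. The key point — and the reason the auxiliary sequence $\{u_t\}$ was introduced in \eqref{def_dual_aux_sequence} — is that the cross terms $\cB_t = \eta\inner{-\hat{\zeta}_t^Y}{u_t - \hat{y}_t} + \eta\inner{\hat{\zeta}_t^X}{\hat{x}_t - x}$ do not involve the a~posteriori dual reference point $y$, and every factor other than $\hat{\zeta}_t$ is measurable before $\hat{\xi}_t$ is drawn ($\hat{z}_t$ is built from $z_t$ and $\xi_t$; $u_t$ from $\{\hat{\zeta}_l\}_{l<t}$ and $\{y_l\}_{l\le t}$; $x$ is deterministic). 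Since the stochastic oracle at $\hat{z}_t$ uses the fresh sample $\hat{\xi}_t$ and is unbiased, $\EE[\cB_t] = 0$ for every $t$, so these terms drop out regardless of how $y$ is chosen. Using $\EE\norm{\zeta_t}^2, \EE\norm{\hat{\zeta}_t}^2 \le \sigma^2$ from the \texttt{SFO}, the last term of \eqref{seg_convergence_prop_general} is then bounded by $\tfrac{8\mu_X\sigma^2}{\Lambda_T - \Lambda_0}\tsum_{t=0}^{T-1}\Lambda_t\eta^2 = 8\sigma^2\eta$.

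It remains to reconcile the coefficient $\tfrac{2\mu_X\Lambda_0}{\Lambda_T - \Lambda_0} = \tfrac{2\mu_X}{(1+\mu_X\eta)^T - 1}$ multiplying $\norm{y_0 - y}^2$ and the noise level $8\sigma^2\eta$ with the claimed $\tfrac{\mu_X\varepsilon}{2}$ and $\delta$. When $\eta = \overline{\eta}$, the coefficient is exactly $\tfrac{\mu_X\varepsilon}{2}$ with $\varepsilon$ as in \eqref{seg_error_char_generic_2}, and $8\sigma^2\overline{\eta} \le 8\sigma^2 q\log T/(\mu_X T) \le \delta$ by the definition of $q$. When $\eta = q\log T/(\mu_X T) < \overline{\eta}$, I would split the coefficient as $\tfrac{\mu_X\varepsilon}{2}$ plus an excess, bound the excess using compactness of $Y$ ($\norm{y_0 - y} \le D_Y$) by $\tfrac{2\mu_X D_Y^2}{(1+\mu_X\eta)^T - 1}$, and then exploit $\mu_X\eta T = q\log T \ge \log T + \log(\overline{\mu}_X^2 D_Y^2/\sigma^2)$ together with an elementary lower bound on $(1+a/T)^T$ to conclude that $(1+\mu_X\eta)^T - 1$ is at least a constant times $T\overline{\mu}_X^2 D_Y^2/\sigma^2$, so that the excess is $\cO(\sigma^2/(\mu_X T))$; at the same time $8\sigma^2\eta$ falls short of $\delta$ by a term of the same order, which is precisely the slack the ``$+2$'' in \eqref{seg_error_char_generic_noise} is reserved for, and it absorbs the excess. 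This constant bookkeeping in the clipped-stepsize regime — which is what forces the logarithmic shape of $q$ — is the step I expect to be the main obstacle.

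Finally, for the ``in particular'' assertion, $\varepsilon' = 1/((1+\mu_X\eta)^T - 1) \le 1$ follows from $(1+\mu_X\eta)^T \ge 1 + \mu_X\eta T \ge 2$: when the stepsize is clipped, $\mu_X\eta T = q\log T \ge 1$ since $q \ge 1/\log T$; otherwise $\mu_X\overline{\eta}T \ge \mu_X/\underline{\mu}_X \ge 1$ by the hypothesis $T \ge 1/(\underline{\mu}_X\overline{\eta})$ and $\underline{\mu}_X \le \mu_X$.
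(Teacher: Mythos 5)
Your proposal is correct and follows essentially the same route as the paper's proof: instantiate Lemma \ref{lemma_seg_in_catalyst_generic} with the constant stepsize, take expectations so that the $\cB_t$ terms vanish and the noise term becomes $8\sigma^2\eta$, read off $\alpha$ and $\varepsilon'$ from the coefficients, and control the $\|y_0-y\|^2$ coefficient by splitting off the $\overline{\eta}$-part (which is exactly $\mu_X\varepsilon/2$) and absorbing the remainder, bounded via $D_Y$ and the choice of $q$ through $(1+\mu_X\tilde\eta)^T\ge e^{q\log T/2}=T^{q/2}$, into the ``$+2$'' slack in $\delta$. Your case split on which term achieves the minimum in $\eta$ is only cosmetically different from the paper's uniform bound $\tfrac{1}{\min\{A,B\}}\le\tfrac1A+\tfrac1B$, and your explicit justification that $\EE[\cB_t]=0$ (measurability of $u_t,\hat z_t$ before $\hat\xi_t$ is drawn) is actually spelled out more carefully than in the paper.
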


\begin{proof}
By letting $\eta_t = \eta \coloneqq \min \cbr{\overline{\eta}, \tilde{\eta}}$ for some $\tilde{\eta} > 0$ and $\overline{\eta} \in (0, \frac{1}{2 L_\psi})$, it can be readily seen that \eqref{seg_eta_condition} holds. Consequently Lemma \ref{lemma_seg_in_catalyst_generic} applies, and taking expectation of both sides in \eqref{seg_convergence_prop_general} yields 
\begin{align*}
&\EE \sbr{ \psi(\overline{x}_T, y) - \psi(x, \overline{y}_T)
+  \frac{\mu_X \Lambda_T}{2 (\Lambda_T - \Lambda_0)} \norm{x_T - x}^2  + \frac{\mu_X \Lambda_T}{2 (\Lambda_T - \Lambda_0) } \norm{y_T - y}^2
 } \nonumber \\
\leq & 
\EE \sbr { \frac{\mu_X \Lambda_0}{2(\Lambda_T - \Lambda_0)} \norm{x_0 - x}^2  
+ \frac{2 \mu_X  \Lambda_0}{ \Lambda_T - \Lambda_0} \norm{y_0 - y}^2  
 }
 + 
 \frac{8 \mu_X \sigma^2}{ (\Lambda_T - \Lambda_0)}  \tsum_{t=0}^{T-1} \Lambda_t \eta_t^2.
\end{align*}
In particular, plugging the choice of $\eta_t = \eta$ into the definition of $\Lambda_t$ in \eqref{seg_def_Lambda} yields 
\begin{align*}
& \EE \sbr{ \psi(\overline{x}_T, y) - \psi(x, \overline{y}_T)
+  \frac{ \mu_X (1 + \mu_X \eta)^T }{2 \sbr{(1 + \mu_X \eta)^T -1 } } \norm{x_T - x}^2  + \frac{ \mu_X (1 + \mu_X \eta)^T }{2 \sbr{(1 + \mu_X \eta)^T -1 } }  \norm{y_T - y}^2}  \\
\leq &\frac{ \mu_X }{2 \sbr{(1 + \mu_X \eta)^T -1 } }  \norm{x_0 - x}^2 
+ \frac{2 \mu_X }{ {(1 + \mu_X \eta)^T -1 } }  \norm{y_0 - y}^2 +  \frac{8 \mu_X \sigma^2}{  (1 + \mu_X \eta)^T - 1}  \tsum_{t=0}^{T-1} (1 + \mu_X \eta)^t \eta^2 \\
= & \frac{ \mu_X }{2 \sbr{(1 + \mu_X \eta)^T -1 } }  \norm{x_0 - x}^2 
+ \frac{2 \mu_X }{ {(1 + \mu_X \eta)^T -1 } }  \norm{y_0 - y}^2 +  8 \sigma^2 \eta \\
 \overset{(a)}{\leq} & \frac{ \mu_X }{2 \sbr{(1 + \mu_X \eta)^T -1 } }  \norm{x_0 - x}^2 
+ \frac{2 \mu_X }{ {\rbr{1 + {\mu_X} \overline{\eta} }^T -1 } }  \norm{y_0 - y}^2
+  \frac{2 \mu_X }{ {(1 + \mu_X \tilde{\eta} )^T -1 } }  \norm{y_0 - y}^2
 +  8 \sigma^2 \tilde{\eta} 
\end{align*}
where $(a)$ follows from  $\eta \leq \tilde{\eta}$.
Given the simple fact that $(1 + x)^T \geq 1 + Tx$ for $x \geq 0$ and $T \geq 1$, whenever  
\begin{align}\label{seg_in_catalyst_T_requirement}
\tilde{\eta} \geq \frac{1}{\mu_X  T},
\end{align}
we can further simplify the above relation as 
\begin{align}
& \EE \sbr{ \psi(\overline{x}_T, y) - \psi(x, \overline{y}_T)
+  \frac{ \mu_X (1 + \mu_X \eta)^T }{2 \sbr{(1 + \mu_X \eta)^T -1 } } \norm{x_T - x}^2  + \frac{ \mu_X (1 + \mu_X \eta)^T }{2 \sbr{(1 + \mu_X \eta)^T -1 } }  \norm{y_T - y}^2} \nonumber \\
\leq & \frac{ \mu_X }{2 \sbr{(1 + \mu_X \eta)^T -1 } }  \norm{x_0 - x}^2 
+ \frac{2 \mu_X }{ {\rbr{1 + {\mu_X} \overline{\eta} }^T -1 } }  \norm{y_0 - y}^2
+  \frac{8 \mu_X }{ {(1 + \mu_X \tilde{\eta} )^T  } }  \norm{y_0 - y}^2
 +  8 \sigma^2 \tilde{\eta}  \nonumber \\
\leq & \frac{ \mu_X }{2 \sbr{(1 + \mu_X \eta)^T -1 } }  \norm{x_0 - x}^2 
+ \frac{2 \mu_X }{ {\rbr{1 + {\mu_X} \overline{\eta}}^T -1 } }  \norm{y_0 - y}^2
+  \frac{8 \mu_X }{ {(1 + \mu_X \tilde{\eta} )^T  } } D_Y^2
 +  8 \sigma^2 \tilde{\eta}.
 \label{seg_constant_step_raw}
\end{align}
Now consider choosing $\tilde{\eta} = \frac{q \log T}{\mu_X T}$ for some $q > 0$.
It can be readily verified that with 
\begin{align}\label{seg_q_choice}
q = \max \cbr{
\frac{2 \sbr{ \log( \overline{\mu}_X^2 D_Y^2 / \sigma^2) + \log T }}{\log T} , \frac{1}{\log T} 
},
\end{align} 
we have \eqref{seg_in_catalyst_T_requirement} holds, and 
\begin{align*}
\frac{8 \mu_X }{ {(1 + \mu_X \tilde{\eta} )^T  } } D_Y^2
\overset{(c)}{\leq} 8 \mu_X D_Y^2 \exp \rbr{ - \frac{\mu_X \tilde{\eta} T}{2}}
= 8 \mu_X D_Y^2 T^{-q/2}  \overset{(d)}{\leq} \frac{\sigma^2}{\overline{\mu}_X T } 
\overset{(e)}{\leq} \frac{\sigma^2}{{\mu}_X T },
\end{align*}
where $(c)$ follows from the fact that $e^{x/2} \leq 1 + x$ for $x \in [0,1]$,
 $(d)$ applies the choice of $q$ in \eqref{seg_q_choice},
and $(e)$ follows from $\overline{\mu}_X \geq \mu_X$. 
Combining the above relation with \eqref{seg_constant_step_raw} and the choice of $q$ in \eqref{seg_q_choice} again, one can further obtain 
\begin{align*}
& \EE \sbr{ \psi(\overline{x}_T, y) - \psi(x, \overline{y}_T)
+  \frac{ \mu_X (1 + \mu_X \eta)^T }{2 \sbr{(1 + \mu_X \eta)^T -1 } } \norm{x_T - x}^2  + \frac{ \mu_X (1 + \mu_X \eta)^T }{2 \sbr{(1 + \mu_X \eta)^T -1 } }  \norm{y_T - y}^2}  \\
\leq & \frac{ \mu_X }{2 \sbr{(1 + \mu_X \eta)^T -1 } }  \norm{x_0 - x}^2 
+ \frac{2 \mu_X }{ {\rbr{1 + {\mu_X} \overline{\eta} }^T -1 } }  \norm{y_0 - y}^2
+ \frac{16 \sigma^2}{\mu_X T} \rbr{
\log \rbr{\frac{\overline{\mu}_X^2 D_Y^2}{\sigma^2}} + \log T + 2
}.
\end{align*}

Finally, it can be noted that whenever $T \geq \max \cbr{ \frac{1}{\underline{\mu}_X \overline{\eta}}, \frac{1}{\mu_X \tilde{\eta}}}$, we have 
\begin{align*}
(1 + \mu_X \eta)^T -1
= \min\cbr{
\rbr{ 1 + \mu_X \overline{\eta} }^T -1, (1 + \mu_X \tilde{\eta})^T - 1
}
\geq 1,
\end{align*}
again from the fact that $(1 + x)^T \geq 1 + Tx$ for $x \geq 0$ and $T \geq 1$. 
Clearly  $T \geq \frac{1}{\mu_X \tilde{\eta}}$ is already satisfied with the specified choice of $\tilde{\eta}$.
It follows from \eqref{seg_error_char_generic_1} and the above relation that $\varepsilon' \leq 1$.
\end{proof}

We are now ready to fully specify the concrete parameter choices of the catalyst scheme applied to the SREG method (Algorithm \ref{procedure_smeg_stoch_sc}),
and subsequently determine its total sample complexity.

\begin{theorem}\label{thrm_seg_nonsc_catalyst}
Suppose $\mu_p = 0$. 
For any $\epsilon > 0$, run \texttt{Catalyst-Minimax}(\texttt{SREG}) with   
\begin{align*}
K \geq \sqrt{\frac{24 \sbr{   2 \mu_d \norm{x^* - \tilde{x}_0}^2   + 
\mu_d \norm{\tilde{y}_0^* - y_0}^2
}}{\epsilon}}, ~ 
\gamma_k = \frac{2}{k+1},  
~ 
\beta_k =   \frac{ \mu_d (k+1)}{4(k+2)},
~
\alpha_k = \frac{\beta_k \Lambda_T}{\Lambda_T - \Lambda_0}.
\end{align*}
where 
\begin{align}
T & = \frac{288 L}{\mu_d} \sbr{
\log (96) + \log \rbr{48 K^2} 
+ \log \rbr{
\frac{16 \sbr{f(\tilde{x}_0) - f(x^*)}}{\norm{x^* - \tilde{x}_0}^2}
}
} \nonumber \\
& ~~~~~~ 
+ \frac{ 24756 \sigma^2 \rbr{
\log \rbr{{{\mu}_d^2 D_Y^2}/{\sigma^2}}  + 2
} K }{\epsilon {\mu}_d }
+ 
\frac{49152 \sigma^2 K }{{\mu}_d \epsilon} \max \cbr{
\log \rbr{\frac{49152 \sigma^2 K }{{\mu}_d \epsilon}}, 1} ,  \label{catalyst_with_seg_choice_of_t_nonsc} \\
\Lambda_T & = \rbr{1 + \min \cbr{\frac{\mu_d}{96 L }, \frac{q \log T}{T} }}^T. 
\end{align}
At the $k$-th  iteration of the catalyst scheme, set $(\tilde{z}_k, z_k)$ by the output of \texttt{SREG}($\Phi_k$), initialized at $(\hat{x}_k, y_{k-1})$ and running for a total of $T$ steps  with   
\begin{align}\label{seg_catalyst_stoch_nsc_stepsize}
\eta_t = \eta \coloneqq \min \cbr{\overline{\eta}, \frac{q \log T}{\beta_k T}}, ~ 
\overline{\eta} =  \frac{(k+2)}{24(k+1) L}, ~ 
q = \max \cbr{
\frac{2 \sbr{ \log( {\mu}_d^2 D_Y^2 /( 16 \sigma^2)) + \log T }}{\log T} , \frac{1}{\log T} 
}.
\end{align}
Then  
$
\EE \sbr{f(\tilde{x}_K) - f(x^*) +  \frac{\mu_d}{6} \norm{\tilde{y}_K^* - y_K}^2}  \leq \epsilon.
$
The total number of calls to SFO can be bounded by 
\begin{align*}
\cO  \bigg(
 \frac{L D_0}{\sqrt{\mu_d \epsilon}} \log \rbr{
\frac{\mu_d D_0}{\epsilon}
}  + \frac{\sigma^2 D_0 }{\epsilon^2} \rbr{ \log \rbr{\frac{\sigma^2}{\mu_d \epsilon}}  + \log \rbr{\frac{\mu_d^2 D_Y^2}{\sigma^2}} } 
\bigg),
\end{align*}
where $D_0 = \norm{x^* - \tilde{x}_0}^2   + 
 \norm{\tilde{y}_0^* - y_0}^2$.
\end{theorem}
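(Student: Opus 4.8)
The plan is to combine the generic convergence bound for \texttt{Catalyst-Minimax} in the $\mu_p=0$ case (Lemma~\ref{lemma_general_convergence_minimax_stoch_nsc}) with the error-condition certificate for \texttt{SREG} (Lemma~\ref{lemma_seg_catalyst_stoch_const_stepsize}). First I would observe that, with $\beta_k=\frac{\mu_d(k+1)}{4(k+2)}$, the proximal subproblem $\Phi_k$ in \eqref{eq:def_Phi_sd} is $\beta_k$-strongly convex in $x$, $\mu_d$-strongly concave in $y$, and $2L$-smooth; so we instantiate Algorithm~\ref{procedure_smeg_stoch_sc} with $\mu_X=\beta_k$, $\mu_Y=\mu_d$, $L_\psi=2L$. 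The hypothesis \eqref{seg_stoch_sc_condition}, $4\mu_X\le\mu_Y$, holds since $4\beta_k=\frac{\mu_d(k+1)}{k+2}<\mu_d$. With the stepsize cap $\overline\eta=\frac{(k+2)}{24(k+1)L}$ one checks $\overline\eta<\frac{1}{12L}<\frac{1}{2L_\psi}$, and since $\beta_k\in[\frac{\mu_d}{6},\frac{\mu_d}{4})$ for $k\ge1$ we may take the surrogates $\overline\mu_X=\frac{\mu_d}{4}\ge\mu_X$ and $\underline\mu_X=\frac{\mu_d}{6}\le\mu_X$ in Lemma~\ref{lemma_seg_catalyst_stoch_const_stepsize}, which is consistent with the choice of $q$ in \eqref{seg_catalyst_stoch_nsc_stepsize} (it uses $\mu_d^2 D_Y^2/(16\sigma^2)=\overline\mu_X^2 D_Y^2/\sigma^2$).

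With these identifications, Lemma~\ref{lemma_seg_catalyst_stoch_const_stepsize} certifies \eqref{eq_stoch_minimax_err_condition_catalyst} with $\alpha_k=\frac{\beta_k\Lambda_T}{\Lambda_T-\Lambda_0}$, $\varepsilon_k'=\beta_k\frac{\Lambda_0}{\Lambda_T-\Lambda_0}$, $\varepsilon_k=\beta_k\frac{4}{(1+\beta_k\overline\eta)^T-1}$, and $\delta_k=\frac{16\sigma^2(\log(\mu_d^2 D_Y^2/(16\sigma^2))+\log T+2)}{\beta_k T}$. The key simplification is that $\beta_k\overline\eta=\frac{\mu_d}{96L}$ is independent of $k$, so $\varepsilon_k=\beta_k\varepsilon$ and $\varepsilon_k'=\beta_k\varepsilon'$ with $\varepsilon=\frac{4}{(1+\mu_d/(96L))^T-1}$, $\varepsilon'=\frac{\Lambda_0}{\Lambda_T-\Lambda_0}$, and $\delta_k\le\delta$ after using $\beta_k\ge\mu_d/6$. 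It then remains to verify the requirements of Lemma~\ref{lemma_general_convergence_minimax_stoch_nsc}: $\varepsilon'\le1$ (granted by Lemma~\ref{lemma_seg_catalyst_stoch_const_stepsize} once $T\ge\frac{1}{\underline\mu_X\overline\eta}$, itself implied by the chosen $T$), the three-way bound on $\varepsilon$ in \eqref{general_minimax_stoch_varepsilon}, and a noise bound $\delta\le\epsilon/(128K)$ so that $64K\delta\le\epsilon/2$. Using $(1+x)^T\ge e^{xT/2}$ for $x\in[0,1]$, the $\varepsilon$-bound reduces to a lower bound on $T$ of order $\frac{L}{\mu_d}$ times the sum of the three logarithms in the first line of \eqref{catalyst_with_seg_choice_of_t_nonsc}, while $\delta\le\epsilon/(128K)$ reduces to $T\gtrsim\frac{\sigma^2 K}{\mu_d\epsilon}(\log(\mu_d^2 D_Y^2/\sigma^2)+\log T+2)$, which is exactly the last two lines of \eqref{catalyst_with_seg_choice_of_t_nonsc}.

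The delicate step is disentangling this last, self-referential inequality, where $\log T$ appears on the right-hand side of the bound defining $T$. The standard resolution is to split the requirement into a term of order $\frac{L}{\mu_d}(\cdots)$, a term $c\frac{\sigma^2 K}{\mu_d\epsilon}(\log(\mu_d^2 D_Y^2/\sigma^2)+2)$, and a term $c\frac{\sigma^2 K}{\mu_d\epsilon}\log T$, and then to note that the third line of \eqref{catalyst_with_seg_choice_of_t_nonsc}, with its $\max\{\log(\cdot),1\}$ factor, dominates the last one via the elementary fact that $a\log T\le T/2$ whenever $T\ge2a\max\{\log(2a),1\}$. Granting this, Lemma~\ref{lemma_general_convergence_minimax_stoch_nsc} yields $\EE[f(\tilde x_K)-f(x^*)+\frac{\mu_d}{12}\|\tilde y_K^*-y_K\|^2]\le\frac{12}{K^2}[2\mu_d\|x^*-\tilde x_0\|^2+\mu_d\|\tilde y_0^*-y_0\|^2]+64K\delta\le\frac{\epsilon}{2}+\frac{\epsilon}{2}=\epsilon$ by the choice $K\ge\sqrt{24[\,\cdot\,]/\epsilon}$ (the slightly larger coefficient $\frac{\mu_d}{6}$ stated in the theorem is absorbed by enlarging $K$ by a constant factor). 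Finally, each of the $K$ outer iterations runs \texttt{SREG} for $T$ steps at two \texttt{SFO} calls per step, so the total is $2KT$; substituting $K=\Theta(\sqrt{\mu_d D_0/\epsilon})$ with $D_0=\|x^*-\tilde x_0\|^2+\|\tilde y_0^*-y_0\|^2$ into the formula for $T$ and collecting terms gives the claimed $\cO\big(\frac{LD_0}{\sqrt{\mu_d\epsilon}}\log\frac{\mu_d D_0}{\epsilon}+\frac{\sigma^2 D_0}{\epsilon^2}(\log\frac{\sigma^2}{\mu_d\epsilon}+\log\frac{\mu_d^2 D_Y^2}{\sigma^2})\big)$ bound.
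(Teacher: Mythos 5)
Your proposal is correct and follows essentially the same route as the paper's own proof: instantiate Lemma~\ref{lemma_seg_catalyst_stoch_const_stepsize} for the subproblem with $\mu_X=\beta_k$, $L_\psi=2L$, $\overline\mu_X=\mu_d/4$, $\underline\mu_X=\mu_d/6$, exploit that $\beta_k\overline\eta=\mu_d/(96L)$ is independent of $k$, verify \eqref{general_minimax_stoch_alpha_beta}--\eqref{general_minimax_stoch_varepsilon} and $64K\delta\le\epsilon/2$ via the stated lower bounds on $T$, and then invoke Lemma~\ref{lemma_general_convergence_minimax_stoch_nsc} and multiply $K\cdot T$. The only deviations are cosmetic (you take $\mu_Y=\mu_d$ rather than the paper's $\mu_d+\beta_k$, either of which satisfies $4\mu_X\le\mu_Y$, and you make the resolution of the self-referential $\log T$ term explicit), and the $\mu_d/6$ versus $\mu_d/12$ constant you flag is an inconsistency already present in the paper's statement rather than a gap in your argument.
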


\begin{proof}
Suppose \eqref{general_minimax_stoch_alpha_beta}, \eqref{general_minimax_stoch_noise_condition} and \eqref{general_minimax_stoch_varepsilon} hold,
then with the specified $K$, suppose further that
\begin{align}\label{seg_catalyst_concrete_param_K_delta_requirement}
64K \delta \leq \frac{\epsilon}{2},
\end{align}
one can invoke Lemma \ref{lemma_general_convergence_minimax_stoch_nsc} and obtain the desired claim as a direct consequence. 

We proceed to establish that specified parameter choices indeed satisfy \eqref{general_minimax_stoch_alpha_beta}, \eqref{general_minimax_stoch_noise_condition}, \eqref{general_minimax_stoch_varepsilon}, 
and \eqref{seg_catalyst_concrete_param_K_delta_requirement}
for each iteration of the catalyst scheme. 
Given the choice of $\beta_k$, it is clear that the subproblem \eqref{eq:def_Phi_sd} solved by the SREG method at each iteration is $\beta_k$ strongly convex in $x$ and $\mu_d + \beta_k$ strongly concave in $y$, and $2L$ smooth. 
In the context of Lemma \ref{lemma_seg_catalyst_stoch_const_stepsize} this implies that for subproblem \eqref{eq:def_Phi_sd}  we can take 
\begin{align*}
\mu_X = \beta_k, ~ \mu_Y = \mu_d + \beta_k, ~ \overline{\mu}_X = \frac{\mu_d}{4}, ~ \underline{\mu}_X = \frac{\mu_d}{6}, ~ L_\psi = 2 L .
\end{align*}
With the above specification of $(\mu_X, \mu_Y)$ and the choice of $\beta_k$ it is also clear that \eqref{seg_stoch_sc_condition} holds for subproblem \eqref{eq:def_Phi_sd}.
Note that with $\overline{\eta} = \frac{(k+2)}{24(k+1) L}$, we have $\overline{\eta} \leq \frac{1}{2L_\psi}$.
Consequently with the choice of 
\begin{align*}
\eta_t = \eta \coloneqq \min \cbr{\overline{\eta}, \frac{q \log T}{\beta_k T}}, ~ 
\overline{\eta} =  \frac{(k+2)}{24(k+1) L}, ~ 
q = \max \cbr{
\frac{2 \sbr{ \log( {\mu}_d^2 D_Y^2 /( 16 \sigma^2)) + \log T }}{\log T} , \frac{1}{\log T} 
},
\end{align*}
we can invoke Lemma \ref{lemma_seg_catalyst_stoch_const_stepsize} and obtain that \eqref{general_minimax_stoch_alpha_beta} holds with 
\begin{align}
\varepsilon' & =  \frac{ \Lambda_0}{\Lambda_T - \Lambda_0} , ~ \varepsilon = \frac{4  }{ {\rbr{1 + {\mu_X} \overline{\eta} }^T -1 } } ,  \label{eq_seg_catalyst_varepsilon_s}\\
\Lambda_T & = (1 + \beta_k \eta)^T =
\rbr{1 + \min \cbr{\frac{\mu_d}{96 L }, \frac{q \log T}{T} }}^T,
\end{align}
where the last equality holds from the choice of $\beta_k$ and $\eta$. 
Notably $\Lambda_T$ is independent of $k$.
From Lemma \ref{lemma_seg_catalyst_stoch_const_stepsize} it follows that whenever $T \geq \frac{144 L}{\mu_d} \geq  \frac{1}{\underline{\mu}_X \overline{\eta}} \equiv \frac{144 L (k+1)}{\mu_d (k+2)}$, we obtain 
\begin{align}\label{seg_catalyst_control_varepsilon_prime}
\varepsilon' \leq 1.
\end{align}
Additionally, with the choice of $\overline{\eta}$ and \eqref{eq_seg_catalyst_varepsilon_s} we have 
\begin{align*}
 \varepsilon \leq \min \cbr{\frac{1}{12}, \frac{1}{(K+1) (K+2)}, \frac{\norm{x^* - \tilde{x}_0}^2}{2 \sbr{f(\tilde{x}_0) - f(x^*) }}}
\end{align*}
whenever 
\begin{align}\label{seg_catalyst_control_varepsilon}
T \geq 
\frac{288 L}{\mu_d} \sbr{
\log (96) + \log \rbr{48 K^2} 
+ \log \rbr{
\frac{16 \sbr{f(\tilde{x}_0) - f(x^*)}}{\norm{x^* - \tilde{x}_0}^2}
}
}.
\end{align}
Combining \eqref{seg_catalyst_control_varepsilon_prime} and \eqref{seg_catalyst_control_varepsilon} shows that \eqref{general_minimax_stoch_varepsilon} is satisfied.  

It remains to show that \eqref{general_minimax_stoch_noise_condition} is satisfied. 
Given \eqref{seg_error_char_generic_noise} in Lemma \ref{lemma_seg_catalyst_stoch_const_stepsize}, it suffices to take 
\begin{align*}
T \geq 
\frac{4096 \sigma^2 K \rbr{
\log \rbr{{\overline{\mu}_X^2 D_Y^2}/{\sigma^2}}  + 2
}  }{\epsilon \underline{\mu}_X }
+ 
\frac{8192 \sigma^2 K }{\underline{\mu}_X \epsilon} \max \cbr{
\log \rbr{\frac{8192 \sigma^2 K }{\underline{\mu}_X \epsilon}}, 1} .
\end{align*}
With the specified choice of $\overline{\mu}_X$ and $\underline{\mu}_X$, the above condition can be satisfied by taking 
\begin{align}
T \geq 
\frac{ 24756 \sigma^2 \rbr{
\log \rbr{{{\mu}_d^2 D_Y^2}/{\sigma^2}}  + 2
} K }{\epsilon {\mu}_d }
+ 
\frac{49152 \sigma^2 K }{{\mu}_d \epsilon} \max \cbr{
\log \rbr{\frac{49152 \sigma^2 K }{{\mu}_d \epsilon}}, 1} .
\label{seg_catalyst_control_delta}
\end{align}
Combining \eqref{seg_catalyst_control_varepsilon} and \eqref{seg_catalyst_control_delta} yields our desired choice of $T$ specified in \eqref{catalyst_with_seg_choice_of_t_nonsc}.
Consequently, the total number of samples required by catalyst scheme is given by 
\begin{align*}
& T \cdot K  \\
= &  \cO \rbr{
\frac{ \sigma^2 \rbr{
\log \rbr{{{\mu}_d^2 D_Y^2}/{\sigma^2}}  + 2
} K^2 }{\epsilon {\mu}_d }
+ 
\frac{ \sigma^2 K^2}{{\mu}_d \epsilon} \max \cbr{
\log \rbr{\frac{ \sigma^2 }{{\mu}_d \epsilon}}, 1} 
+ 
\frac{ L K }{\mu_d} \sbr{
\log \rbr{ K^2} 
+ \log \rbr{
\frac{ \sbr{f(\tilde{x}_0) - f(x^*)}}{\norm{x^* - \tilde{x}_0}^2}
}
}
} \\
= &  
\cO ~ \bigg(
 \frac{L (\norm{x^* - \tilde{x}_0}^2   + 
 \norm{\tilde{y}_0^* - y_0}^2)}{\sqrt{\mu_d \epsilon}} \log \rbr{
\frac{\mu_d (\norm{x^* - \tilde{x}_0}^2   + 
 \norm{\tilde{y}_0^* - y_0}^2)}{\epsilon}
} \\
& ~~~ ~~~ ~~~
+ \frac{\sigma^2 (\norm{x^* - \tilde{x}_0}^2   + 
 \norm{\tilde{y}_0^* - y_0}^2) }{\epsilon^2} \rbr{ \log \rbr{\frac{\sigma^2}{\mu_d \epsilon}}  + \log \rbr{\frac{\mu_d^2 D_Y^2}{\sigma^2}} } 
\bigg),
\end{align*}
with the last equality followed from the choice of $K$.
The proof is then completed.
\end{proof}

As the last result in this section, we establish the total sample complexity of the catalyst scheme when catalyzing the SREG method for strongly-convex-strongly-concave problems. 

\begin{theorem}\label{thrm_seg_sc_catalyst}
Suppose $\mu_p > 0$. 
Run \texttt{R-Catalyst-Minimax}(\texttt{SREG}) for a total of 
\begin{align*}
E = \log_2 \rbr{
\frac{\Delta_0}{\epsilon}
} ,
 ~ \Delta_0 \coloneqq f(\tilde{x}_{(0)} ) - f(x^*) +\frac{\mu_d}{12} \norm{\tilde{y}_{(0)}^* - y_{(0)}}^2 .
\end{align*}
epochs, with epoch length $K = 24 \sqrt{\frac{\mu_d}{\mu_p}}$.
Within each epoch, choose 
\begin{align*}
\gamma_k = \frac{2}{k+1},  
~ 
\beta_k =   \frac{ \mu_d (k+1)}{4(k+2)},
~
\alpha_{k,e} = \frac{\beta_k \Lambda_{T_e}}{\Lambda_{T_e} - \Lambda_0}.
\end{align*}
where 
\begin{align}
T_e  = \frac{288 L \rbr{
\log (96) + \log \rbr{48 K^2} 
}}{\mu_d}  
 +
\frac{128 K 2^e}{\Delta_0} \bigg[
& \frac{192 \sigma^2 \rbr{ \log(\mu_d^2 D_Y^2 / \sigma^2) + 2}}{\mu_d}  + 
\frac{384 \sigma^2 \max \cbr{\log \rbr{\frac{49152 K \sigma^2}{\Delta_0}} + e \log 2, 1}}{\mu_d} 
\bigg]
 ,  \nonumber \\
\Lambda_{T_e} & = \rbr{1 + \min \cbr{\frac{\mu_d}{96 L }, \frac{q \log T_e}{T_e} }}^{T_e}. \nonumber 
\end{align}
At the $k$-th  iteration of the $e$-th epoch, set $(\tilde{z}_k, z_k)$ by the output of \texttt{SREG}($\Phi_k$), initialized at $(\hat{x}_k, y_{k-1})$ and running for a total of $T$ steps with 
\begin{align*}
\eta_t = \eta \coloneqq \min \cbr{\overline{\eta}, \frac{q \log T_e}{\beta_k T_e}}, ~ 
\overline{\eta} =  \frac{(k+2)}{24(k+1) L}, ~ 
q = \max \cbr{
\frac{2 \sbr{ \log( {\mu}_d^2 D_Y^2 /( 16 \sigma^2)) + \log T_e }}{\log T_e} , \frac{1}{\log T_e} 
}.
\end{align*}
Then we obtain 
\begin{align}\label{thrm_eq_minimax_catalyst_stoch_opt_gap}
\EE \sbr{f(\tilde{x}_{(e)}) - f(x^*) +  \frac{\mu_d}{6} \norm{\tilde{y}_{(e)}^* - y_{(e)}}^2}  \leq \epsilon.
\end{align}
The total number of calls to SFO can be bounded by 
\begin{align*}
\cO \rbr{
\frac{L}{\sqrt{\mu_d \mu_p}} \log\rbr{\frac{\mu_d}{\mu_p}}  \log_2 \rbr{\frac{\Delta_0}{\epsilon}} 
+ \frac{\sigma^2}{\mu_p \epsilon} \sbr{
\log \rbr{\frac{\mu_d^2 D_Y^2}{\sigma^2}} + 2 + \log \rbr{\frac{\sigma^2 \mu_d}{\Delta_0  \mu_p} }
}
+ \frac{\sigma^2}{\mu_p \epsilon} \log_2 \rbr{ \frac{\Delta_0}{\epsilon} }
}.
\end{align*}
\end{theorem}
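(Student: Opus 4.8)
The plan is to assemble the theorem from the two pieces already established: Lemma~\ref{lemma_general_convergence_minimax_stoch_sc}, which shows that the restarting scheme halves the potential $\Delta_e := f(\tilde x_{(e)}) - f(x^*) + \tfrac{\mu_d}{12}\norm{\tilde y_{(e)}^* - y_{(e)}}^2$ per epoch whenever the inexact-proximal conditions \eqref{general_minimax_stoch_alpha_beta_sc}, \eqref{general_minimax_stoch_noise_condition_sc}, \eqref{general_minimax_stoch_varepsilon_2} hold, and Lemma~\ref{lemma_seg_catalyst_stoch_const_stepsize}, which certifies \eqref{eq_stoch_minimax_err_condition_catalyst} for the \texttt{SREG} output with explicit $(\varepsilon', \alpha, \varepsilon, \delta)$. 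First I would instantiate Lemma~\ref{lemma_seg_catalyst_stoch_const_stepsize} on the $k$-th proximal subproblem \eqref{eq:def_Phi_sd}: since $\Phi_k$ is $\beta_k$-strongly-convex in $x$, $(\mu_d + \beta_k)$-strongly-concave in $y$, and $2L$-smooth, I set $\mu_X = \beta_k$, $\mu_Y = \mu_d + \beta_k$, $L_\psi = 2L$, $\overline\mu_X = \mu_d/4$, $\underline\mu_X = \mu_d/6$, and check the structural hypotheses $4\mu_X \le \mu_Y$ (equivalently $3\beta_k \le \mu_d$, valid since $\beta_k < \mu_d/4$) and $\overline\eta = \tfrac{(k+2)}{24(k+1)L} \le \tfrac{1}{2L_\psi} = \tfrac{1}{4L}$. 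A key bookkeeping point, exactly as in the proof of Theorem~\ref{thrm_seg_nonsc_catalyst}, is that $\beta_k\overline\eta = \tfrac{\mu_d}{96L}$ and hence $\beta_k\eta = \min\cbr{\tfrac{\mu_d}{96L}, \tfrac{q\log T_e}{T_e}}$, so that $\Lambda_{T_e} = (1+\beta_k\eta)^{T_e}$, $\varepsilon' = \tfrac{\Lambda_0}{\Lambda_{T_e} - \Lambda_0}$, $\varepsilon = \tfrac{4}{(1+\mu_d/(96L))^{T_e} - 1}$, and $\delta$ from \eqref{seg_error_char_generic_noise} are all independent of $k$, while $\alpha_{k,e} = \beta_k\Lambda_{T_e}/(\Lambda_{T_e} - \Lambda_0)$ matches the prescribed value.

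Next I would translate the three requirements of Lemma~\ref{lemma_general_convergence_minimax_stoch_sc} into lower bounds on $T_e$. The condition $\varepsilon' \le 1$ needs $T_e$ at least of order $\tfrac{1}{\underline\mu_X\overline\eta}$, which is of order $\tfrac{L}{\mu_d}$; the condition $\varepsilon \le \min\cbr{\tfrac{1}{12}, \tfrac{1}{(K+1)(K+2)}}$, using $(1+x)^{T} \ge e^{Tx/2}$ for $x \in [0,1]$, needs $T_e$ at least of order $\tfrac{L}{\mu_d}\log(K^2)$ — this yields the first, $e$-independent summand of $T_e$; and the condition $\delta \le \tfrac{1}{128K}(\tfrac12)^e\Delta_0$, with $\delta = \tfrac{16\sigma^2\rbr{\log(\mu_d^2 D_Y^2/(16\sigma^2)) + \log T_e + 2}}{\beta_k T_e}$ and $\beta_k \ge \mu_d/8$, reduces to a transcendental inequality of the form $T_e \ge c(a + \log T_e)$; inverting it via the elementary fact that $T \ge 2ca + 2c\max\cbr{\log(2c),1}$ implies $T \ge c(a+\log T)$ produces the second summand of $T_e$ (the $\max\cbr{\log(49152K\sigma^2/\Delta_0) + e\log 2, 1}$ term is precisely this inversion with $c$ of order $K2^e\sigma^2/(\Delta_0\mu_d)$). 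Taking $T_e$ to be the sum of these two bounds validates all hypotheses of Lemma~\ref{lemma_general_convergence_minimax_stoch_sc}, whence $\EE[\Delta_e] \le (\tfrac12)^e\Delta_0$; choosing $E = \log_2(\Delta_0/\epsilon)$ then gives \eqref{thrm_eq_minimax_catalyst_stoch_opt_gap}.

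Finally I would bound the sample complexity. Each \texttt{SREG} step costs two \texttt{SFO} calls, each catalyst iteration runs \texttt{SREG} for $T_e$ steps, and each epoch has $K$ catalyst iterations, so the total is $\sum_{e=1}^E \cO(K T_e)$. The $e$-independent part of $T_e$ contributes $\cO\rbr{E K \cdot \tfrac{L}{\mu_d}\log(K^2)}$, which is $\cO\rbr{\tfrac{L}{\sqrt{\mu_d\mu_p}}\log\rbr{\tfrac{\mu_d}{\mu_p}}\log_2\rbr{\tfrac{\Delta_0}{\epsilon}}}$ since $K = 24\sqrt{\mu_d/\mu_p}$; the $2^e$-dependent part, using $\sum_{e=1}^E 2^e = \cO(2^E) = \cO(\Delta_0/\epsilon)$ and $\sum_{e=1}^E e\,2^e = \cO\rbr{2^E\log_2(\Delta_0/\epsilon)}$, contributes $\cO\rbr{\tfrac{K^2\sigma^2}{\mu_d\epsilon}\sbr{\log\rbr{\tfrac{\mu_d^2 D_Y^2}{\sigma^2}} + 2 + \log\rbr{\tfrac{\sigma^2\mu_d}{\Delta_0\mu_p}}} + \tfrac{K^2\sigma^2}{\mu_d\epsilon}\log_2\rbr{\tfrac{\Delta_0}{\epsilon}}}$, and $K^2/\mu_d$ of order $1/\mu_p$ turns this into the claimed $\tfrac{\sigma^2}{\mu_p\epsilon}$ terms. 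The main obstacle I anticipate is this self-referential lower bound on $T_e$ from the noise requirement: because $\delta$ itself scales like $\tfrac{\log T_e}{T_e}$ (the $\log T_e$ entering through $q$ in the stepsize), one cannot simply divide through, and obtaining a clean, $k$-uniform closed form for $T_e$ requires the $T \ge c(a+\log T)$ inversion to be executed carefully — which is exactly why the $\max\cbr{\log(\cdot),1}$ and the additive $e\log 2$ appear in the statement. A secondary, more routine, chore is checking that all per-epoch and per-iteration hypotheses ($4\mu_X \le \mu_Y$, $\overline\eta \le 1/(2L_\psi)$, $T_e \ge 1/(\underline\mu_X\overline\eta)$, the $k$-independence of $\Lambda_{T_e}$) hold uniformly over $k \in \cbr{1,\dots,K}$ and $e \in \cbr{1,\dots,E}$, and that the potential guarantee $\EE[\Delta_E] \le \epsilon$ transfers to the $\tfrac{\mu_d}{6}$-weighted form \eqref{thrm_eq_minimax_catalyst_stoch_opt_gap} (up to adjusting the constant in $E$ or $\delta$).
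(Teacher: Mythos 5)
Your proposal is correct and follows essentially the same route as the paper: the paper's proof simply invokes "similar lines as Theorem \ref{thrm_seg_nonsc_catalyst}" to certify the hypotheses of Lemma \ref{lemma_general_convergence_minimax_stoch_sc} via Lemma \ref{lemma_seg_catalyst_stoch_const_stepsize} (with the same choices $\mu_X=\beta_k$, $\mu_Y=\mu_d+\beta_k$, $L_\psi=2L$, $\overline{\mu}_X=\mu_d/4$, $\underline{\mu}_X=\mu_d/6$, and the same $T\ge c(a+\log T)$ inversion for the noise requirement), then sums $K\sum_e T_e$ exactly as you do. Your explicit treatment of the $k$-independence of $\Lambda_{T_e}$ and of the $\mu_d/12$ versus $\mu_d/6$ weighting is, if anything, more careful than the paper's.
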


\begin{proof}
Following similar lines as in Theorem \ref{thrm_seg_nonsc_catalyst}, it can be readily verified that conditions in Lemma \ref{lemma_general_convergence_minimax_stoch_sc} are satisfied by the choice of the specified parameters.
Hence \eqref{thrm_eq_minimax_catalyst_stoch_opt_gap} follows as an immediate consequence.
In addition, the total number of calls to SFO is bounded by 
\begin{align*}
& K \tsum_{e=1}^E T_e  \\
= &  \frac{288 KE L  \rbr{
\log (96) + \log \rbr{48 K^2} 
}}{\mu_d}  
+ 
\frac{128 K^2}{\Delta_0} \sbr{
\frac{192 \sigma^2 \rbr{ \log(\mu_d^2 D_Y^2 / \sigma^2) + 2}}{\mu_d}
+ \frac{384 \sigma^2 (\log(49152 K \sigma^2/\Delta_0) + 1) }{\mu_d}
}
\tsum_{e=1}^E 2^e \\
& ~~~ + \frac{49152 \log 2  K^2 \sigma^2 }{\Delta_0 \mu_d} \tsum_{e=1}^E e 2^e \\
= & \cO \rbr{
\frac{L}{\sqrt{\mu_d \mu_p}} \log\rbr{\frac{\mu_d}{\mu_p}}  \log_2 \rbr{\frac{\Delta_0}{\epsilon}} 
+ \frac{\sigma^2}{\mu_p \epsilon} \sbr{
\log \rbr{\frac{\mu_d^2 D_Y^2}{\sigma^2} }+ 2 + \log \rbr{\frac{\sigma^2 \mu_d}{\Delta_0  \mu_p} }
}
+ \frac{\sigma^2}{\mu_p \epsilon} \log_2 \rbr{ \frac{\Delta_0}{\epsilon} }
}.
\end{align*}
The proof is then completed. 
\end{proof}

In view of Theorem \ref{thrm_seg_nonsc_catalyst} and \ref{thrm_seg_sc_catalyst}, it can be seen that the sample complexity of the catalyst scheme applied to the SREG method can be bounded by 
\begin{align*}
\tilde{\cO} \rbr{
 \frac{L}{\sqrt{\mu_d \epsilon}} +  \frac{\sigma^2 }{\epsilon^2}
} ~~~ 
\text{
\bigg(resp. 
$\tilde{\cO} \rbr{
\frac{L}{\sqrt{\mu_d \mu_p}} \log(1/\epsilon) + \frac{\sigma^2 }{\mu \epsilon}
}
$
\bigg)
}
\end{align*}
for convex-strongly-concave (resp.  strongly-convex-strongly-concave) problems. 
In particular, these obtained sample complexities are optimal for their respective problem classes up to a potential logarithmic factor.  
To the best of our knowledge, the proposed catalyst scheme applied to the SREG method seems to obtain for the first time these sample complexities among first-order methods. 
Finally, it is also important to note here that existing lower bounds for the smooth and strongly-convex-strongly-concave problems are established for the duality gap or the distance to the optimal solutions \cite{zhang2019lower, ouyang2021lower}, while in this manuscript we have focused on the primal optimality gap of \eqref{def_problem}. Whether the logarithmic factors reported in this manuscript can be removed seems to remain open.


\section{Concluding Remarks}
This manuscript presents a catalyst scheme applicable to both smooth convex optimization and convex-concave minimax problems. 
We establish the optimal iteration and sample complexities of the catalyst scheme when accelerating a simple variant of stochastic gradient method for convex optimization. 
We further develop a novel variant of the stochastic extragradient method for solving strongly-monotone variational inequalities with optimal complexities. 
By catalyzing the proposed extragradient variant, the catalyst scheme further obtains optimal iteration and sample complexities for strong-convex-strongly-concave minimax problems, up to a potential logarithmic factor.
It remains highly rewarding to further investigate whether such a logarithmic factor is indeed unavoidable for minimax problems or could be removed with refined methods.

\section*{Acknowledgement}

The authors express their sincere appreciation to  Tianyi Lin for valuable discussions that inspire the development of this manuscript.

\bibliographystyle{unsrt}
{\small
\bibliography{references}
}


\appendix

\section{Supplementary Proofs}\label{sec_supp}

\begin{proof}[Proof of Lemma \ref{prop_sgd_generic_convergence}]
Denoting $\delta_{t} = \nabla \phi(u_t)  - g_t$, one clearly has 
$
\EE_{\xi_t} \sbr{\delta_t }= 0, ~ \EE_{\xi_t} \sbr{ \norm{\delta_t}_*^2 } \leq \sigma^2. 
$
In addition,  
\begin{align*}
\phi(u_t) & \overset{(a)}{\leq} \phi(u_{t-1}) + \inner{\nabla \phi(u_{t-1})}{u_t - u_{t-1}} + \frac{L_\phi}{2} \norm{u_t - u_{t-1}}^2  \\
& =  \phi(u_{t-1}) + \inner{g_{t-1}}{u_t - u_{t-1}} + \frac{1}{2 \eta_t}  \norm{u_t - u_{t-1}}^2
- \rbr{  \frac{1}{2 \eta_t} - \frac{L_\phi}{2}}  \norm{u_t - u_{t-1}}^2 +  \inner{\delta_{t-1}}{u_t - u_{t-1}}\\
& \overset{(b)}{\leq}  \phi(u_{t-1}) + \inner{g_{t-1}}{u -  u_{t-1}}  + \frac{1}{2 \eta_t}  \norm{u -  u_{t-1}}^2 - \frac{1}{2 \eta_t}  \norm{u -  u_t}^2 + 
\frac{\norm{\delta_{t-1}}_*^2}{2 (1/\eta_t - L_\phi)} \\
& =   \phi(u_{t-1}) + \inner{\nabla \phi(u_{t-1}) }{u -  u_{t-1}}  + \frac{1}{2 \eta_t}  \norm{u -  u_{t-1}}^2 - \frac{1}{2 \eta_t}  \norm{u -  u_t}^2 + 
\frac{\norm{\delta_{t-1}}_*^2}{2 (1/\eta_t - L_\phi)}
 - \inner{\delta_{t-1}}{u -  u_{t-1}} \\
 & \overset{(c)}{\leq} 
 \phi(u) + \rbr{\frac{1}{2\eta_t} - \frac{\mu_\phi}{2}}  \norm{u -  u_{t-1}}^2 - \frac{1}{2 \eta_t}  \norm{u -  u_t}^2 + 
\frac{\norm{\delta_{t-1}}_*^2}{2 (1/\eta_t - L_\phi)}
 - \inner{\delta_{t-1}}{u -  u_{t-1}},
\end{align*}
where $(a)$ follows from $\phi$ being $L_\phi$ smooth, $(b)$ follows from the Young's inequality together with $\eta_t < 1/L_\phi$, and $(c)$ follows from $\phi$ being $\mu_\phi$ strongly-convex. 
Define $\Delta_t \coloneqq \EE \sbr{\phi(u_t) - \phi(u)}$ and $D_t^{X} \coloneqq \EE \norm{u - u_t}^2$. 
Further taking expectation of the above inequality and using $\EE \sbr{ \inner{\delta_{t-1}}{u -  u_{t-1}} } = 0$  yield
\begin{align*}
\Delta_t \leq \rbr{\frac{1}{2\eta_t} - \frac{\mu_\phi}{2}}  D_{t-1}^X - \frac{1}{2 \eta_t} D_t^X + \frac{\sigma^2}{2(1/\eta_t - L_\phi)}.
\end{align*}
Now multiply both sides of the above relation by $\frac{2 \eta_t \mu_\phi}{\Lambda_t}$, we obtain
\begin{align*}
\frac{2 \eta_t \mu_\phi}{\Lambda_t} \Delta_t 
+ \frac{\mu_\phi}{\Lambda_t} D^X_t & \leq \frac{(1 -  \eta_t \mu_\phi) \mu_\phi }{\Lambda_t} D^X_{t-1} + \frac{\mu_\phi \eta_t \sigma^2}{\Lambda_t (1/\eta_t - L_\phi)} .
\end{align*}
  Summing up of the above inequality from $t=1$ to $T$, and making use of the definition of $\Lambda_t$, we obtain 
 \begin{align}\label{eq_sgd_sumup_raw}
 \tsum_{t=1}^T \frac{2 \eta_t \mu_\phi}{\Lambda_t} \Delta_t 
 + \frac{\mu_\phi}{\Lambda_T} D_T^X 
 \leq  \mu_\phi D_0^X + \tsum_{t=1}^T  \frac{\mu_\phi \eta_t \sigma^2}{\Lambda_t (1/\eta_t - L_\phi)}.
 \end{align}
 In addition, it is clear that 
 \begin{align*}
  \tsum_{t=1}^T \frac{2 \eta_t \mu_\phi}{\Lambda_t} 
  = \tsum_{t=1}^T 2 \rbr{\frac{1}{\Lambda_t} - \frac{1}{\Lambda_{t-1}}} = \frac{2(1 - \Lambda_T)}{\Lambda_T}.
 \end{align*}
 Combining the above observation with \eqref{eq_sgd_sumup_raw} and the definition of $\overline{u}_T$, we obtain 
 \begin{align*}
 \frac{2(1-\Lambda_T)}{\Lambda_T} \EE \sbr{\phi(\overline{u}_T) - \phi(s^*)} + \frac{\mu_\phi}{\Lambda_T} D_T^X 
 \leq  \mu_\phi D_0^X + \tsum_{t=1}^T  \frac{\mu_\phi  \eta_t\sigma^2}{\Lambda_t (1/\eta_t - L_\phi)}.
 \end{align*}
 The desired claim follows immediately after simple rearrangements. 
\end{proof}

\begin{proof}[Proof of Proposition \ref{prop_sgd_err_condition}]
Since $\eta_t \leq \frac{1}{2 L_\phi}$, Lemma \ref{prop_sgd_generic_convergence} applies and one can immediately obtain \eqref{sgd_err_condition_alpha_epsilon_delta}.
Moreover, direct computation yields  
\begin{align}
\Lambda_t = \Lambda_{t-1} \rbr{1 - \eta_t \mu_\phi}  = \frac{(t_0+1) (t_0 + 2)}{(t+t_0+1) (t+t_0 + 2)} .
\label{eq_sgd_lambda_t}
\end{align}
and 
\begin{align}\label{eta_divide_by_lambda_sgd}
\frac{\eta_t^2}{\Lambda_t (1-L_\phi \eta_t)} \overset{(a)}{\leq} \frac{2 \eta_t^2 }{\Lambda_t} \overset{(b)}{\leq} \frac{32}{\mu^2 t_0^2},
\end{align}
where $(a)$ follows from $\eta_t \leq \frac{1}{2L_\phi}$, 
and $(b)$ follows from \eqref{eq_sgd_lambda_t} and the definition of  $\eta_t$.
From \eqref{eq_sgd_lambda_t} and the definition of $t_0$,  
it is also clear that when $T \geq t_0$, $\Lambda_T \leq 1/2$. 
 Combining  this observation with \eqref{eq_sgd_lambda_t} and \eqref{eta_divide_by_lambda_sgd}, we obtain   
\begin{align*}
\varepsilon & =  \frac{\Lambda_T  }{1- \Lambda_T} \leq 1;  \\
\delta & =  \frac{\Lambda_T } {2(1-\Lambda_T)} \tsum_{t=1}^T  \frac{\mu_\phi \eta_t^2 \sigma^2}{\Lambda_t (1 - L_\phi \eta_t)}
\leq \frac{t_0^2}{T^2} \cdot \frac{32 \sigma^2 T}{\mu_\phi t_0^2} 
\leq \frac{32 \sigma^2}{\mu_\phi T}.
\end{align*}
The above relations immediately imply \eqref{sgd_err_condition_epsilon_delta_ub} and thus conclude the proof.
\end{proof}

\end{document}